\documentclass[aap]{imsart}

\RequirePackage{amsthm,amsmath,amsfonts,amssymb}
\RequirePackage[numbers,sort&compress]{natbib}
\usepackage[bb=boondox]{mathalfa}
\usepackage{xcolor}
\usepackage{microtype}

\RequirePackage[colorlinks,citecolor=blue,urlcolor=blue]{hyperref}
\startlocaldefs
\hypersetup{colorlinks=true,allcolors=blue}      
\newtheorem{theorem}{Theorem}[section] 
\newtheorem{lemma}{Lemma}[section]
\theoremstyle{definition}
\newtheorem{definition}{Definition}[section]

\newtheorem{proposition}{Proposition}[section]
\newtheorem{corollary}{Corollary}[section]
\newtheorem{remark}{Remark}[section]
\newtheorem{assumption}{Assumption}[section]

\newcommand{\tmin}{t_{\textrm{min}}}
\newcommand{\tminA}{t_{\textrm{min},A}}
\newcommand{\Var}{\mathrm{Var}}
\newcommand{\dd}{\mathrm{d}}

\newcommand{\e}{\mathrm{e}}
\newcommand{\ps}{p_{\textrm{s}}}
\newcommand{\js}{j_{\textrm{s}}}
\newcommand{\vs}{v_{\textrm{s}}}

\DeclareMathOperator{\E}{\mathbb{E}}

\endlocaldefs
\begin{document}

\begin{frontmatter}
\title{Concentration of additive 
functionals of Stratonovich-type}
\runtitle{Concentration of Stratonovich-type functionals}

\begin{aug}
\author[A]{\fnms{Rick}~\snm{Bebon}\ead[label=e1]{rick.bebon@physik.uni-freiburg.de}
\orcid{0000-0003-2187-0008}}
\author[A]{\fnms{Alja\v{z} }~\snm{Godec}\ead[label=e2]{agodec@physik.uni-freiburg.de}\thanks{[\textbf{Corresponding author}]}\orcid{0000-0003-1888-6666}}
\author[A]{\fnms{Angelika}~\snm{Rohde}\ead[label=e3]{angelika.rohde@stochastik.uni-freiburg.de}}
\address[A]{Faculty of Mathematics and Physics, Albert-Ludwigs University of Freiburg\printead[presep={,\ }]{e1,e2,e3}}
\end{aug}

\begin{abstract}
Additive functionals $\overline{J}_t=\frac{1}{t}\int_0^tU(X_s)\circ \dd X_s$ of  Stratonovich-type 
recently attracted much attention 
in the context of inference of thermodynamic properties of complex systems from observations $U$ of individual fluctuating paths $(X_s)_{0\le s\le t}$, whereby $X_0$ is 
initiated from some general measure. 
Concentration results on $\overline{J}_t$, albeit desirable, are virtually nonexistent. They turn out to be significantly more challenging 
to prove than for classical Lebesgue-type functionals
$\overline{\rho}_t=\frac{1}{t}\int_0^t V(X_s) \dd s$ because
the tilt deforms the full second-order structure of the Feynman-Kac generator instead of  
contributing an additive potential.
This renders the generator generally non-self-adjoint even under detailed balance.
We overcome this by working with a symmetrized
Dirichlet form with a new effective potential that now couples
the observable to the non-equilibrium character of the dynamics.
We prove concentration inequalities for $\overline{J}_t$ for any bounded, sufficiently smooth vector-valued function $U$ of a general geometrically ergodic diffusion process $X_s$, including explicit sub-gamma and Bernstein-type inequalities, and we obtain explicit upper bounds on ${\Var}(\overline{J}_t)$. 
Strikingly, 
under detailed balance the concentration of $\overline{J}_t$
is distinctively sub-Gaussian at all times and all deviations, with a variance proxy
fixed by the noise alone and independent of the spectral gap, which has
no analog for $\overline{\rho}_t$.
\end{abstract}

\begin{keyword}[class=MSC]
\kwd[Primary ]{60G17 (sample path properties), 60J60 (Diffusion processes), 60E15 (Inequalities; stochastic ordering), 60J25 (Continuous-time Markov processes), 60F10 (Large deviations), 60J35 (Transition functions, generators and resolvents)}
\end{keyword}

\begin{keyword}
\kwd{Concentration inequalities}
\kwd{Additive functionals of Markov processes}
\kwd{Empirical (Stratonovich) currents}
\kwd{Control of uncertainty in time-series analysis}
\kwd{Non-Asymptotic confidence intervals}
\end{keyword}

\end{frontmatter}

\section{Introduction}
\subsection{Motivation: Uncertainty in time-series analysis with small data}
Additive functionals of ergodic Markov processes are 
fundamental objects in probability theory, and their fluctuations are described 
asymptotically via ergodic theorems, central limit theorems, and large deviation theory, 
and non-asymptotically through concentration inequalities.
The mathematical literature on this topic is vast but focuses almost exclusively on  
classical functionals of \emph{density} (i.e., Lebesgue) type, $t^{-1}\int_0^t V(X_s)\dd s$,
for which a well-developed non-asymptotic theory is available \cite{wu2000deviation,cattiaux2008deviation,lezaud2001chernoff,paulin2015concentration}.
Meanwhile, over the past decade, additive functionals of \emph{Stratonovich} type $t^{-1}\int_0^tU(X_s)\circ \dd X_s$---so-called
\emph{generalized currents}---have moved to the center of attention in non-equilibrium statistical mechanics 
and stochastic thermodynamics, where they define energy-like quantities (such as heat, work, and entropy production) of mesoscopic systems along individual 
stochastic trajectories \cite{sekimoto2010,seifert2012stochastic,seifert2025stochastic,peliti2021stochastic}.
The study of Stratonovich integrals is further motivated as they (in contrast to It\^o integrals) generalize the notion of a continuity equation for additive functionals to individual sample paths \cite{dieball2022coarse, dieball2022mathematical}
(see Remark~\ref{rem:strato_motivation} 
for further motivation and discussion).

From a path-wise perspective on thermodynamics, their
finite-time fluctuations carry intrinsic, nontrivial physical meaning, 
and should be understood not only as statistical uncertainty but as 
\emph{physical observables in their own right},
as exemplified by 
the so-called \emph{thermodynamic uncertainty relation} (TUR) which bounds the fluctuations of any such 
current from below by the mean entropy production
\emph{of the underlying dynamical process} \cite{seifert2012stochastic, seifert2025stochastic}.
Controlling Stratonovich-type current fluctuations is moreover indispensable for
interpreting state-of-the-art experiments (e.g., single-molecule and particle-tracking
measurements~\cite{neuman2008single,camunas2016elastic,saxton2008,ernst2014})---where they 
are evaluated directly from measured time series without
knowledge of the underlying dynamics---and in controlling uncertainty 
in thermodynamic inference~\cite{Seifert2019,horowitz2020thermodynamic,Dieball_2025_P,dieball2023direct}.
The latter critically hinges on the feasibility of
adequately sampling the second 
or higher-order~\cite{dechant2020,Dechant2021,Wampler2021,Manikandan2022,Ray2023} moments,
or the full tails of the distribution~\cite{collin2005verification,alemany2015free,ribezzi2014free,Pohorille2010,Hummer2001,Zuckerman2002,Jarzynski2006,Gore2003,engel2009asymptotics}.
In practice, owing to experimental constraints and intrinsically small sample sizes
one typically observes only a handful (up to $\sim100$) of independent trajectories,
each of finite duration, frequently short relative to the mixing time of the underlying dynamics.
Therefore, neither stationarity nor standard asymptotic error analysis can be invoked.
In this regime, central-limit and bootstrapping methods
implicitly assume the sample to be representative of the target statistic and tend to underestimate uncertainty~\cite{schenker1985qualms,Davison1997,Shao2003,hogg1995introduction}, while Bayesian credible intervals~\cite{gelman1995bayesian}, though not asymptotic, are sensitive to, and generally biased by, the choice of prior~\cite{Smid2019,Mostofian2019}.

Yet, in contrast to the density-type case, the mathematical literature addressing
a non-asymptotic control of the fluctuations of Stratonovich functionals of diffusion processes
is essentially absent. Importantly, it turns out that proving concentration inequalities for Stratonovich functionals is substantially more challenging due to the higher complexity of the Feynman-Kac semigroup which carries through the entire proof. In contrast to the standard Lebesgue-type additive functionals, the generator of the Feynman-Kac semigroup is also almost never (except for highly specialized settings) self-adjoint. Therefore, the proof strategies developed for Lebesgue-type additive functionals do not directly generalize and a different approach is required. 

Here we fill this gap by developing a non-asymptotic theory of the concentration of additive Stratonovich-type functionals.
This in turn yields the missing uncertainty quantification framework for
finite-time and small-sample control,
and makes precise the meaning of ``sufficiently long trajectory'' and ``sufficiently many given the length'' in
practice. Furthermore, physical implications are developed in a companion manuscript.

\subsection{Preview of proof strategy and main results}
Here and in the following we consider ergodic It\^o diffusions $(X_t)_{t\geq 0}$  
on $\mathbb{R}^d$ with additive noise, Markov generator
$\mathcal{L}=b\cdot\nabla+\nabla\cdot D\nabla$, invariant probability
measure $\mu$ (with density $\dd\mu/\dd x=\ps$), and initial law $\nu\ll\mu$ with $\dd\nu/\dd\mu\in
L^2(\mu)$.
For our explicit results (Section~\ref{sec:conc_poincare}) we further assume that the symmetric
part $\mathcal{L}_S$ of $\mathcal{L}$ in $L^2(\mu)$ has a spectral gap
$\lambda_{\mathrm{gap}}>0$, which is equivalent to the process being
geometrically ergodic (see Section~\ref{sec:setup} for the precise
assumptions). 
We study two structurally distinct classes of additive functionals of
$(X_t)_{t\geq0}$, obtained by time-averaging a single path over $[0,t]$.
Functionals of \emph{density type},
\begin{align}
  \overline{\rho}_t(V) = \frac{1}{t}\int_0^t V(X_s)\dd s,
  \label{eq:density_intro}
\end{align}
with $V$ a scalar observable, are ordinary Lebesgue integrals along the
path and have zero quadratic variation, whereas functionals of
\emph{current (or Stratonovich) type},
\begin{align}
  \overline{J}_t(U) = \frac{1}{t}\int_0^t U(X_s)\circ\dd X_s,
  \label{eq:current_intro}
\end{align}
with $U$ being a vector field and $U(X_s)\circ\dd X_s=\sum_i U_i(X_s)\circ\dd X^i_s$, are Stratonovich stochastic integrals that carry
a non-trivial quadratic variation. For suitable choices of $U$ the latter
realize the \emph{generalized currents} of stochastic thermodynamics
(their precise definitions, physical interpretation, and the associated literature are found in
Section~\ref{sec:setup_observables}).

Our central goal is to prove non-asymptotic tail bounds of the form
\begin{align}
  \mathbb{P}^\nu\left(\overline{A}_t - \E^\mu[\overline{A}_t] \geq a\right)
  \leq
  N_\nu\e^{-t I^A\bigl(\E^\mu[\overline{A}_t]+a\bigr)},
  \qquad
  N_\nu \equiv \left\|\frac{\dd\nu}{\dd\mu}\right\|_{L^2(\mu)},
  \label{eq:conc_sketch_intro}
\end{align}
that are valid for \emph{all} $t>0$ and $a>0$, for both $\overline{A}_t\in\{\overline{\rho}_t(V),\overline{J}_t(U)\}$, with an explicit rate $I^A$ and a prefactor $N_\nu$ that encodes the
initial condition. From this result we obtain confidence intervals on estimates from single trajectories as well as (small-)sample means. 

For density-type the route to such bounds is by now classical \cite{wu2000deviation,cattiaux2008deviation,lezaud2001chernoff,Birrell2025}.
The key strategy involves Chernoff's inequality, which reduces the tail probability
to the moment generating function $\E^\nu[\e^{kA_t}]$, which itself is encoded via a Feynman-Kac
semigroup $P_t^{k,\rho}$ whose corresponding \emph{tilted generator}
is given by a well-known Schr\"odinger-type operator that takes the simple
form $\mathcal{L}_k^\rho=\mathcal{L}+kV$, i.e. , the tilt that enters 
only as an additive multiplication operator that leaves the original 
differential structure of $\mathcal{L}$ untouched.
In further steps, its quadratic form only requires
the symmetric $\mathcal{L}_S+kV$ part, whose principal eigenvalue $\Lambda^\rho(k)$
turns into the norm bound $\|P_t^{k,\rho}\|_{L^2(\mu)}\leq\e^{t\Lambda^\rho(k)}$ via a Lumer-Phillips argument. Lastly, a Poincar\'e inequality gives explicit control of $\Lambda^\rho(k)$
and yields sub-gamma and Bernstein-type bounds.

While our approach conceptually relies on similar ideas developed in those works, 
we emphasize that the difficulty and novelty 
in extending non-asymptotic results
to additive functionals of the Stratonovich-type 
requires additional and conceptually new input at each step of the proof.
Due to the higher complexity of the corresponding Feynman-Kac 
semigroup---whose tilt is no longer a simple additive potential, precisely because of the non-trivial quadratic variation of $\overline{J}_t$---the 
analysis of Stratonovich functionals turns out to be substantially more delicate than their density-type counterpart.
Non-asymptotic concentration results for $\overline{J}_t(U)$, although
highly desirable, have to the best of our knowledge remained essentially non-existent.

We next describe the main challenges and how we solve them. To highlight the newly appearing mathematical challenges, we contrast the proof for Stratonovich functionals with the 
density-case. In particular, we re-derive corresponding density-type bounds adapted
to our setting alongside (mainly following \cite{wu2000deviation,cattiaux2008deviation})
to make the comparison explicit.

\subsubsection*{The current tilt is a differential operator}
Because the exponent of the Feynman-Kac semigroup $P_t^{k,J}$ contains a stochastic Stratonovich integral with non-trivial quadratic variation, its associated tilted generator (Proposition~\ref{prop:tilted_gen_current}) is obtained from the underlying $\mathcal{L}$
by a gradient shift $\nabla\to\nabla+kU$ and reads
\begin{align}
  \mathcal{L}_k^J
  = b\cdot(\nabla+kU) + (\nabla+kU)\cdot D(\nabla+kU).
  \label{eq:tilted_contrast_intro}
\end{align}
Compared to the density case, $\mathcal{L}_k^\rho = \mathcal{L} + kV$, the current tilt
thus \emph{deforms the full second-order structure} of the Markov generator.
In particular, it gives rise to a new first-order term $2kU\cdot D\nabla$
and (non-negative) zeroth-order term $k^2(U\cdot DU)$, neither of which appears for
density-type observables (see Remark~\ref{rem:structural_comparison}).
As a result, the classical Feynman-Kac approach of the density (i.e., Lebesgue-type) functionals no longer applies.

\subsubsection*{Loss of self-adjointness under detailed balance}
In the established density case the tilted generator $\mathcal{L}^\rho_k$ fails to be self-adjoint
in $L^2(\mu)$ for all $k$ \emph{only} when the underlying dynamics is non-reversible, i.e., only when
$\mathcal{L}$ itself is not self-adjoint.
Detailed balance thus gives a particularly easy case where
spectral control is inherited directly ``for free''.
For current observables, however, this is \emph{not} the case anymore.
Due to the new first-order term  $2kU\cdot D\nabla$ the tilted operator $\mathcal{L}_k^J$
is now non-self-adjoint for every $k\neq0$ and non-trivial $U$, 
\emph{even if the underlying dynamics obeys detailed balance}
(Section~\ref{sec:selfadjoint_tilted}).
We solve this issue by bounding the semigroup norm via the quadratic form $\langle\mathcal{L}_k^Jf,f\rangle_\mu$ that becomes independent of the antisymmetric part of the tilted operator 
(Lemma~\ref{lem:quad_identity}). 
This allows us to compute the explicit $L^2(\mu)$-adjoint in terms of the 
reversible-irreversible decomposition of the drift $b=D\nabla\log\ps+\vs$ 
and express the new symmetrization again in a now different 
Schr\"odinger from (Proposition~\ref{prop:sym_tilted_explicit}),
\begin{align}
\widetilde{\mathcal{L}}_k^J
  = \mathcal{L}_S + kW + k^2 Q,
  \label{eq:sym_contrast_intro}
\end{align}
where $W \equiv \vs\cdot U$ and $Q \equiv U\cdot DU \geq 0$.
This crucial step reveals, only on the level of the quadratic forms, 
a new \emph{effective potential}
$kW+k^2Q$ for the Stratonvich-type functionals that, however, differs 
fundamentally from the density counterpart in two essential ways.

First, the linear part now not only couples to the observable but \emph{also to
the non-equilibrium structure of the dynamics} through the local mean velocity $\vs$ (i.e., $k \vs \cdot U$ compared to 
simply $kV$) and vanishes identically under detailed balance.
Second, the quadratic part has its origin in the non-vanishing quadratic variation of the stochastic
integral and couples to the noise $D$ of the underlying dynamics
but does \emph{not} vanish at equilibrium.
Both new contributions are specific to Stratonovich-type functionals and have far-reaching consequences
for all results that follow.
The effective potential of the Schr\"odinger operator
$\mathcal{L}_S + kW + k^2 Q$ thus encodes dynamical information about the irreversibility of the underlying dynamics.
Strikingly, detailed balance turns out to be more
consequential for Stratonovich-type functionals compared to the density-type, 
as it leads to qualitatively different concentration behavior
(see below and Corollary~\ref{cor:db_subgaussian}).

\subsubsection*{Dissipativity-restoring bound on the norm for all times}
Since the exponential tilting via the Stratonovich integral destroys 
the contraction property of the underlying Markov semigroup, the 
corresponding operator norm may grow in $t$.
A key step now is to identify this precise growth rate which we establish by restoring
dissipativity through an explicit shift of the generator.
Indeed, shifting $\mathcal{L}_k^J$ by the supremum of its quadratic form $\Lambda^J(k)$---by 
Lemma~\ref{lem:quad_identity} and the Rayleigh-Ritz theorem we identify
$\Lambda^J(k)$ as the principal eigenvalue of the self-adjoint operator $\widetilde{\mathcal{L}}_k^J$---restores
the dissipative property. By the Lumer-Phillips theorem, this yields 
the upper bound $\|P_t^{k,J}\|_{L^2(\mu)}\leq\e^{t\Lambda^J(k)}$ for all $t\geq0$
(Proposition~\ref{prop:fk_norm_bound}). Following up with a Cauchy-Schwarz argument, we bound
the moment generating function with a prefactor $N_\nu$ that encodes the finite-time information about the initial condition which is lost in the limit $t\to\infty$ (Corollary~\ref{cor:mgf_bound}).
Subsequent optimization of the Chernoff bound gives the general concentration inequality
(Theorem~\ref{thm:conc_current}) in the form of Eq.~\eqref{eq:conc_sketch_intro}
where $I^J$ is the Legendre transform of $\Lambda^J(k)$. 
Our strategy relies on $\Lambda^J(k)$, i.e., the principal eigenvalue of the \emph{symmetrized} rather than the full tilted generator, which gives us insights into what is lost in this step.
We prove $\lambda_{\max}(\mathcal{L}_k^J)\leq\Lambda^J(k)$ together with 
conditions for equality (the antisymmetric part has to exactly annihilate the principal eigenfunction of the symmetrized generator) and identify explicit cases for the dynamics and observables
where this holds (Proposition~\ref{prop:tightness}).
These insights allow us to characterize when our non-asymptotic bounds are sharp to exponential order, i.e., in the sense of large deviation theory (Remark~\ref{rem:ldt_sharpness}).

\subsubsection*{Explicit bounds via a Poincar\'e inequality}
To make $\Lambda^A(k)$, and therefore its Legendre transform, explicit we assume that $\mathcal{L}_S$ obeys a Poincar\'e inequality with constant $\lambda_\mathrm{gap}^{-1}$. 
In the established density case, the potential is linear in $k$ and with a parametrization
of the test function $f=(1+\varepsilon g)/\sqrt{1+\varepsilon^2}$ the variational problem for $\Lambda^\rho(k)$
reduces to a supremum of a concave quadratic function in $\varepsilon$, such 
that the resulting bound takes a sub-gamma form $\tilde\sigma^2k^2/[2(1-ck)]$ whose Legendre transform is
explicit~\cite{boucheron2013concentration}. 
For current observables the new quadratic term $k^2\langle Q,f^2\rangle_\mu$---present even under detailed balance---prevents this direct approach, and we follow two different strategies.

In the first approach, we bound the quadratic term uniformly $\langle Q,f^2\rangle_\mu\leq\|Q\|_{L^\infty(\mu)}$
and via an additional estimate absorb the resulting term $k^2\|Q\|_{L^\infty(\mu)}$ directly into
the variance proxy. This has the advantage that it preserves the sub-gamma structure at the price of the supremum norm (Theorem~\ref{thm:explicit_current_1}).
The second strategy treats the quadratic $Q$ differently 
by centering it and controlling $Q-\E^\mu[Q]$ via the Poincar\'e inequality. 
The price of this sharper estimate is that the resulting optimization 
for the Legendre transformation is now a \emph{quintic} in $k$ with no closed-form
root available.
We solve this issue by relaxing the restricted tilt (i.e., optimization range)
$k < k_{\max}^\ast$ where $k_{\max}^\ast$ solves an explicit quadratic equation.
This trick then restores a sub-gamma form with the mean $\E^\mu[Q]$ replacing
$\|Q\|_{L^\infty(\mu)}$, however, now with a larger exponential scale parameter
$c_{J,2}\geq c_{J,1}$ (Lemma~\ref{lem:bernstein_current} and Theorem~\ref{thm:explicit_current_2}).
The two bounds are genuinely different since whenever the second bound sharpens the Gaussian regime, it does so at the cost of an earlier onset of the exponential tail (Remark~\ref{rem:approach_comparison}).
This trade-off, like the appearance of $Q$ itself, has
\emph{no} counterpart for density-type functionals. 
Moreover, since $W$ couples to the 
irreversible part of the dynamics $\vs$,
our results show fundamental qualitative differences
in the fluctuations of Stratonovich-type functionals for systems
obeying or breaking detailed balance.
Under detailed balance $W\equiv0$
and the entire exponential tail of the bound for $\overline{J}_t$ vanishes,
such that the bound is sub-Gaussian with a variance proxy
of $2\| U\cdot DU\|_{L^\infty(\mu)}/t$ for all
$t>0$ and deviations $a>0$,
and the bound no longer contains the spectral gap $\lambda_{\mathrm{gap}}$
(Corollary~\ref{cor:db_subgaussian} and Remark~\ref{rem:db_bounds}).
Notably, this has no counterpart in the density case,
where the exponential tail parameter $c_\rho = \|V - \E^\mu[V]\|_{L^\infty(\mu)}/\lambda_{\mathrm{gap}}$ does not vanish under detailed balance.
This different concentration behavior for
current observables has far-reaching (physical) 
implications for the uncertainty quantification in (thermodynamic) 
inferences of additive functionals (see companion manuscript).

Importantly, the most intricate part of our proof lies not only in establishing these concentration inequalities, but in recognizing the underlying structural properties (e.g., on the tilted current generator), whose validity is precisely what allows the argument to go through. We believe that the techniques and overall strategy developed here may serve as a foundation for further non-asymptotic concentration results for stochastic-integral functionals, as we discuss in Section~\ref{sec:conclusion}.

\subsubsection*{Summary of main results}
Concretely, we establish the following results.

\begin{itemize}
    \item For general ergodic (possibly non-reversible) diffusions we prove the non-asymptotic
    concentration inequality in the form of Eq.~\eqref{eq:conc_sketch_intro} 
    for Stratonovich currents (Theorem~\ref{thm:conc_current}, with left-tail and two-sided
    versions in Corollary~\ref{cor:two_sided_current}), which is expressed
    through the Legendre transform $I^J$ of the principal eigenvalue $\Lambda^J(k)$ of the symmetrized tilted generator~\eqref{eq:sym_contrast_intro}. 
    For reversible dynamics, where $\js\equiv0$, the bound becomes
    purely sub-Gaussian for all $t>0$ and $a>0$, without requiring
    a Poincar\'e inequality (Corollary~\ref{cor:db_subgaussian}).
    Further, using spectral arguments, 
    we give conditions
    under which $\Lambda^J(k)$
    corresponds to the scaled cumulant generating function,
    $\Lambda^J(k) = \lim_{t\to\infty}\tfrac{1}{t}\log\E\bigl[\e^{k J_t}\bigr]$, and hence becomes sharp up to exponential order
    in the sense of large deviation theory (Proposition~\ref{prop:tightness}).
    For comparison, we re-derive the corresponding concentration bound for density-type 
    functionals and adapt it to our diffusion setting following~\cite{wu2000deviation} in order to
    make the contrast with current-type functionals explicit.

    \item  Under the assumption of a Poincar\'e inequality on the underlying dynamics (i.e., a geometrically ergodic process) we obtain fully explicit sub-gamma (and further
    Bernstein-type) bounds,
    \begin{align}
      \mathbb{P}^\nu\left(\overline{J}_t(U) - \E^\mu[\overline{J}_t] \geq a\right)
      \leq
      \left\|\frac{\dd\nu}{\dd\mu}\right\|_{L^2(\mu)}
      \exp\left[-t\frac{\tilde\sigma_{J}^2}{c_{J}^2}h\left(\frac{c_{J} a}{\tilde\sigma_{J}^2}\right)\right],
    \end{align}
    with $h(u) = 1 + u - \sqrt{1+2u}$. 
    Using two different strategies (see above), we derive two sets of
    effective parameters $\tilde\sigma_{J,i}^2$ and $c_{J,i}$, $i\in\{1,2\}$, that depend
    on the spectral gap $\lambda_\mathrm{gap}$ of the
    reversible part of the dynamics and on norms of the vector field $U$
    (Theorems~\ref{thm:explicit_current_1}  and~\ref{thm:explicit_current_2}).
    Consistently, reversibility gives $c_{J,1}=0$ in Approach 1
    and recovers Corollary~\ref{cor:db_subgaussian}.
    We re-derive the analogous density-type
    bound in the same framework, following~\cite{cattiaux2008deviation}, to highlight the qualitative differences. 

\item  The same analysis further yields explicit finite-time upper bounds on the variance (and by extension the asymptotic variance) of both functionals 
in terms of the steady-state current $\js(x)$ and density $\ps(x)$ (Proposition~\ref{prop:variance_explicit}). In particular, $\mathrm{Var}_\mu(\overline\rho_t)
    \leq \frac{2\mathrm{Var}_\mu(V)}{\lambda_\mathrm{gap}t}$ and 
\begin{align}
\mathrm{Var}_\mu(\overline J_t)
    &\leq \frac{1}{t}\left[2\langle U, DU \rangle_\mu
      +\frac{2\mathrm{Var}_\mu([\js/\ps]\cdot U)}{\lambda_\mathrm{gap}}\right].
\end{align}

\item
By inverting the explicit concentration bounds, we develop a non-asymptotic framework 
for uncertainty quantification which, to the best of our knowledge, is new
for both classes of additive functionals.
  When estimating the stationary mean $\E[\overline{A}_t]$ from a single time-average
  of length $t$, or,  equivalently from the sample mean $\widehat{A}_{n,t} = n^{-1}\sum_{i=1}^{n}\overline{A}_t^{(i)}$ over $n$
  independent realizations, we construct finite-time
  confidence intervals such that for every $n \geq 1$ and $t>0$ the stationary mean falls
  within $\widehat{A}_{n,t} \pm r_A(\alpha,n,t)$ with probability of at least $1-\alpha$,
  where (Theorem~\ref{thm:ci} and Corollary~\ref{cor:ci_sample_mean})
  \begin{align}
  r_A(\alpha,n,t)
  =
  \sqrt{\frac{2\tilde\sigma_A^2}{nt}\left(\log\frac{2}{\alpha} + n\log N_\nu\right)}
  +
  \frac{c_A}{nt}\left(\log\frac{2}{\alpha} + n\log N_\nu\right),
  \quad
  N_\nu \equiv \left\|\frac{\dd\nu}{\dd\mu}\right\|_{L^2(\mu)}.
  \end{align}
Moreover, solving $r_A(\alpha,n,t)\leq\varepsilon$ leads to the requirement
\begin{align}
  nt \geq \Theta_A(\varepsilon)\left(\log\frac{2}{\alpha} + n\log N_\nu\right),
 \qquad
  \Theta_A(\varepsilon)
  \equiv
  \frac{1}{2\varepsilon^2}
  \left(\sqrt{\tilde\sigma_A^2} + \sqrt{\tilde\sigma_A^2 + 2 c_A \varepsilon}\right)^{2} ,
\end{align}
which fixes the minimal sample size $n_{{\rm min},A}$ 
and the minimal trajectory length $\tminA$
\begin{align}
n_{{\rm min},A}(t,\varepsilon,\alpha)
=
\left\lceil
\frac{\Theta_A(\varepsilon)\log(2/\alpha)}{t - \Theta_A(\varepsilon)\log N_\nu}
\right\rceil, 
\qquad  t_{\min,A}(n, \varepsilon, \alpha)
  =  \frac{\Theta_A(\varepsilon)}{n}\left(\log\frac{2}{\alpha} + n\log N_\nu\right),
\end{align}
  required to estimate the stationary mean with tolerance 
  $\varepsilon$ at confidence $1-\alpha$ from $n$ time-series of
  length $t$ (Theorems~\ref{thm:tmin} and~\ref{thm:min_sample}). For a single trajectory ($n=1$) the same argument gives the minimal trajectory length $\tminA(\varepsilon,\alpha) =\Theta_A(\varepsilon)\log(2N_\nu/\alpha)$
  to reach tolerance $\varepsilon$.      
\end{itemize}

\paragraph{Relation to previous work}
Non-asymptotic deviation inequalities of \emph{density-type} functionals
(Eq.~\eqref{eq:density_intro}) of ergodic, possibly non-reversible Markov processes were established
in~\cite{wu2000deviation} via the symmetrized Dirichlet form and the
Lumer-Phillips theorem, and were subsequently extended through functional and transportation-cost inequalities~\cite{cattiaux2008deviation,gao2014bernstein,guillin2009transportation}
(see \cite{Girotti2023}
for quantum Markov processes).
Related Bernstein-type inequalities for ergodic Markov chains were obtained via 
spectral methods in~\cite{lezaud2001chernoff,paulin2015concentration} 
and extended to the hypocoercive setting (including non-asymptotic confidence intervals) in \cite{Birrell2025}
all, however, only for density-type observables.
Non-asymptotic concentration bounds
for time-asymmetric observables recently appeared for
discrete-state Markov jump processes \cite{bakewell2023general,bakewell2025bounds}
but remain elusive for continuous-state It\^{o} diffusions.
The asymptotic (large-time) description is typically formulated in terms of
large deviation theory~\cite{donsker1975asymptotic1, donsker1975asymptotic2,donsker1976asymptotic, donsker1983asymptotic,Ferr2020} 
(see also \cite{ReyBellet2015, ReyBellet2016} 
for improved convergence in irreversible Langevin samplers)
and for current-type functionals specifically 
it includes the level $2.5$
large deviation description~\cite{maes2008steady,barato2015formal,chetrite2015nonequilibrium,hoppenau2016level}
and macroscopic fluctuation theory~\cite{bertini2015flows,bertini2002macroscopic}. 
The thermodynamic significance of current fluctuations, and the thermodynamic uncertainty relation in
particular, is the subject of an extensive literature in stochastic
thermodynamics~\cite{barato2015,gingrich2016dissipation,horowitz2020thermodynamic,pietzonka2018universal,dechant2020,dieball2023direct}.
The concentration results developed here advance the existing work
in the following aspects.

First, the results are non-asymptotic, holding for every fixed
$t>0$, in contrast to large-deviation results, which hold only as
$t\to\infty$. Second, they control the full distribution through
exponential tail bounds, whereas the thermodynamic uncertainty relation constrains only the variance.
Third, they apply to current-type functionals
where results for It\^{o} diffusions are limited to density-type functionals. 
Fourth, the resulting uncertainty quantification---explicit confidence intervals and
minimal trajectory lengths and minimal sample sizes---is new for both classes of additive functionals.
Fifth, whereas the thermodynamic uncertainty relation bounds current fluctuations (i.e., their variances)
from below by the dissipation, our bounds on the full tail probability 
bound fluctuations from above and add
to the recent literature of so-called inverse thermodynamic uncertainty 
relations~\cite{bakewell2023general,bakewell2025bounds,vo2025inverse}. 

\paragraph{Organization of the manuscript}
The remainder of the manuscript is organized as follows.
Section~\ref{sec:setup} fixes the setting, states the assumptions on the dynamics, 
and introduces the additive functionals of interest in more detail.
In Section~\ref{sec:feynman_kac} we develop the theory of Feynman-Kac semigroups and their
associated tilted generators for both classes of observables.
We further establish their connection to Chernoff's inequality through a bound on the operator norm
that serves as the basis for all subsequent results.
In Section~\ref{sec:conc_bound_general} we prove the general concentration
inequalities from the spectral properties of the symmetrized tilted
generators, and in Section~\ref{sec:conc_poincare} we further prove explicit
sub-gamma and Bernstein-type bounds under a Poincar\'e inequality,
together with resulting upper bounds on the variance
and asymptotic variance of the additive functionals.
In Section~\ref{sec:uq} we develop a framework for uncertainty quantification based on non-asymptotic confidence intervals and minimal observation times and sample sizes. 

\section{Setup: dynamics and additive functionals}
\label{sec:setup}
In this section, we first introduce the setting by recalling
the basic framework of It\^o diffusions from the operator-theoretic point of view. Next, in Section~\ref{sec:setup_observables} we define and give further background on additive functionals, i.e.~density- and (Stratonovich) current-type functionals. 

\subsection{Underlying Markov dynamics}
\label{sec:setup_dynamics}
Throughout $(X_t)_{t\ge0}$ is an ergodic time-homogeneous It\^o diffusion on the
continuous state space $E=\mathbb{R}^d$, 
with invariant measure $\mu$, that is 
realized as the (strong) solution
of the stochastic differential equation
\begin{align}
  \dd X_t = b(X_t)\dd t + \sigma\dd W_t,
  \label{eq:sde_setup}
\end{align}
with (Lipschitz) drift vector $b:\mathbb{R}^d\to\mathbb{R}^d$, constant matrix-valued additive noise amplitude
$\sigma\in\mathbb{R}^{d\times m}$, and $(W_t)_{t\ge0}$ a standard $m$-dimensional
Wiener process defined on a filtered probability space 
$(\Omega,\mathcal{F},(\mathcal{F}_t)_{t\ge0},\mathbb{P})$
where $\mathcal{F}_t=\sigma(W_s:s\le t)$ denotes the natural filtration, i.e., the smallest $\sigma$-field for which all random variables $W_s$ for $s\leq t$ are measurable
and $X_t$ is a $\mathcal{F}_t$-adapted c\`adl\`ag process.
We write $(\mathbb{P}^x)_{x\in \mathbb{R}^d}$ for the family
of laws started deterministically from $X_0=x$ and for a general
initial distribution $\nu$ we set $\mathbb{P}^\nu=\int_{\mathbb{R}^d}\mathbb{P}^x \dd\nu(x)$. 
The transition semigroup of $(X_t)_{t\ge0}$ extends to a
strongly continuous contraction semigroup $(P_t)_{t\ge0}$ on the real Hilbert space
$L^2(\mu) \equiv L^2(\mathbb{R}^d,\mu)$~\cite{bakry2014analysis,ethier2009markov}, where $\mu$ is an invariant probability
measure with smooth, strictly positive stationary density $\dd\mu=\ps(x) \dd x$. 
The corresponding infinitesimal generator of $(P_t)_{t\ge0}$ is denoted by
$(\mathcal{L},D(\mathcal{L}))$ and the $L^2(\mu)$ adjoint is given by $\mathcal{L}^\dagger$.
In our setting the generator takes
the explicit form, for $f\in D(\mathcal{L})$,
\begin{align}
  \mathcal{L} = b(x)\cdot\nabla + \nabla\cdot D\nabla,
  \label{eq:generator_setup}
\end{align}
with constant, symmetric, positive-definite diffusion $d\times d$ matrix $D\equiv \tfrac12\sigma\sigma^\top$. 
For the stationary process we take $X_0\sim\mu$ with $\E^\mu$ denoting the corresponding expectation
and $\mathrm{Var}_\mu$ the stationary variance.
Moreover, we write $\langle f,g\rangle_\mu=
\int_{\mathbb{R}^d} f g \dd\mu$ for the inner product on $L^2(\mu)$,
$\|\cdot\|_{L^2(\mu)}$ for the induced norm, and $\|\cdot\|_{L^\infty(\mu)}$ for the sup-norm.

\subsubsection{Reversible-irreversible drift decomposition}\label{d_decomp}
Since the dynamics of Eq.~\eqref{eq:sde_setup} is generally non-reversible (i.e., violates detailed balance), 
$\mathcal{L}$ is in general not self-adjoint on $L^2(\mu)$, and we throughout use its symmetric-antisymmetric 
decomposition $\mathcal{L}=\mathcal{L}_S+\mathcal{L}_A$ on $L^2(\mu)$, where 
$\mathcal{L}_S=\tfrac12(\mathcal{L}+\mathcal{L}^\dagger)$ and
$\mathcal{L}_A=\tfrac12(\mathcal{L}-\mathcal{L}^\dagger)$
\cite{pavliotis2014stochastic,jiang2004mathematical,komorowski2012fluctuations,qian2013decomposition}. 
For It\^o diffusions this decomposition 
splits the drift into a gradient and a non-gradient part,
\begin{align}
  b(x) = D\nabla\log\ps(x) + \vs(x),
  \qquad
  \vs(x) \equiv \frac{\js(x)}{\ps(x)},
  \label{eq:drift_split}
\end{align}
where $\vs(x)$ is the so-called
\emph{local mean velocity} and $\js(x) \equiv b(x)\ps(x) - D\nabla\ps(x)$ denotes the (divergence-free) \emph{stationary probability current},
and yields,
\begin{align}
\mathcal{L}_S f=\ps^{-1}\nabla\cdot(\ps D\nabla f),
\quad\quad
\mathcal{L}_A f=\vs\cdot\nabla f.
\label{eq:markov_gen_decomposition}
\end{align}
The $L^2(\mu)$-self-adjoint generator $\mathcal{L}_S$ corresponds to the reversible part of
the underlying dynamics with gradient drift $D\nabla\log\ps$ and $\mathcal{L}_A$
is the skew-adjoint generator that maintains non-equilibrium steady-states.
The process is reversible---equivalently, at equilibrium and satisfies detailed
balance---if and only if $\mathcal{L}_A=0$, i.e., $\js\equiv0$.

\subsubsection{Dirichlet form and spectral gap}
The symmetric part $\mathcal{L}_S$ generates a reversible Markov semigroup with the
same invariant measure $\mu$ and is the operator associated with the symmetrized
\emph{Dirichlet form} 
\begin{align}
  \mathcal{E}(f,g)\equiv-\langle\mathcal{L}_S f,g\rangle_\mu,
  \qquad
  \mathcal{E}(f,f)=\langle\nabla f,D\nabla f\rangle_\mu
  =\frac{1}{2}\int|\sigma^\top\nabla f|^2\dd\mu\ge0,
  \label{eq:dirichlet_setup}
\end{align}
whose closure is denoted by $(\mathcal{E},D(\mathcal{E}))$
and corresponds to a symmetric 
Markov semigroup $(P_t^S)_{t\ge0}$
on $L^2(\mu)$~\cite{wu2000deviation, bakry2014analysis}.
Since the antisymmetric part $\mathcal{L}_A$ is 
skew-adjoint, it does not contribute to the quadratic form
and the Dirichlet form $\mathcal{E}(f,f)$ associated with
$\mathcal{L}_S$ is the same as the one associated with $\mathcal{L}$, i.e., 
$\mathcal{E}(f,f)= -\langle\mathcal{L}_S f,f\rangle_\mu=-\langle\mathcal{L}f,f\rangle_\mu$.
Moreover,  $\mathcal{E}(f,f)$ can be used in a variational definition of the \emph{spectral gap} 
of $-\mathcal{L}_S$, via, for $f\in D(\mathcal{E})$,
\begin{align}
  \lambda_\mathrm{gap}
  \equiv
  \inf_{f}
  \frac{\mathcal{E}(f,f)}{\mathrm{Var}_\mu(f)}
  \geq 0.
  \label{eq:gap_variational}
\end{align}
It therefore corresponds to 
the magnitude of the smallest non-zero eigenvalue of $-\mathcal{L}_S$ when the spectrum
has a discrete part and sets the $L^2(\mu)$ relaxation rate of the symmetrized semigroup by
$\|\e^{t\mathcal{L}_S}(f-\langle f\rangle_\mu)\|_{L^2(\mu)}
\leq\e^{-\lambda_\mathrm{gap}t}\|f-\langle f\rangle_\mu\|_{L^2(\mu)}$.
At the same time, it guarantees exponential decay of stationary
correlations at rate $\lambda_{\mathrm{gap}}$ also for the irreversible
dynamics, since the antisymmetric part $\mathcal{L}_A$ only accelerates
relaxation in 
$L^2(\mu)$~\cite{hwang1993accelerating,lelievre2013optimal,duncan2016variance,ReyBellet2016,ReyBellet2015,Duncan2017,Abdulle2019}.
Additionally, by Eq.~\eqref{eq:gap_variational},
$\lambda_\mathrm{gap}$ is the largest constant for which the spectral-gap inequality
$\lambda_\mathrm{gap}\mathrm{Var}_\mu(f)\le\mathcal{E}(f,f)$ holds for all
$f\in D(\mathcal{E})$. More generally, $\mu$ satisfies 
a \emph{Poincar\'e inequality}~\cite{bakry2014analysis} 
(see also \cite{Boissard2014,Menz2014,Schlichting2019} and references therein for recent developments)
with constant $C_P$ if, for $f\in D(\mathcal{E})$,
\begin{align}
  \mathrm{Var}_\mu(f)\le C_P\mathcal{E}(f,f),
  \label{eq:poincare}
\end{align}
which holds for some finite $C_P$ if and only if $\lambda_\mathrm{gap}>0$ and the
best constant is $C_P=\lambda_\mathrm{gap}^{-1}$~\cite{bakry2014analysis}. 
We remark that throughout $\lambda_\mathrm{gap}$ refers to the spectral gap of the symmetric part $\mathcal{L}_S$
and not to the full generator $\mathcal{L}$ which generally has a complex spectrum.

\begin{assumption}[Dynamics]
\label{ass:dynamics}
On the state space $\mathbb{R}^d$ (or a bounded subdomain), the 
diffusion~\eqref{eq:sde_setup} and its invariant probability measure $\mu$ satisfy
the following assumptions.
\begin{enumerate}
  \item We consider ergodic dynamics. In particular, $\mu$ is the unique invariant probability measure of
    $(X_t)_{t\ge0}$, i.e., any $f\in L^2(\mu)$ with $P_tf=f$ for all $t\geq 0$
    is constant $\mu$-almost everywhere 
    (see e.g., \cite{ethier2009markov,bakry2014analysis}). 
  \item The semigroup $(P_t)_{t\ge0}$ is strongly continuous on
    $L^2(\mu)$, the Dirichlet form~\eqref{eq:dirichlet_setup} is closable, $\mathcal{L}$
    and $\mathcal{L}^\dagger$ admit a common core
    $\mathcal{C}\subset D(\mathcal{L})\cap D(\mathcal{L}^\dagger)$ that contains the
    constant functions, and the stationary density $\ps$ is smooth and strictly
    positive.
  \item We consider additive noise, i.e., the noise matrix $\sigma$ is constant with
    $\sigma\sigma^\top>0$, such that $D=\tfrac12\sigma\sigma^\top$ is constant and uniformly elliptic.
  \item The symmetric part $\mathcal{L}_S$ satisfies a Poincar\'e inequality~\eqref{eq:poincare} with  optimal
    constant $C_\mathrm{P}=\lambda_\mathrm{gap}^{-1}$ where $\lambda_\mathrm{gap}$
    is the spectral gap of $\mathcal{L}_S$.
\end{enumerate}
\end{assumption}

\begin{remark}[Sufficient conditions for Poincar\'e inequality]
\label{rem:poincare_sufficient}
A Poincar\'e inequality holds under several standard conditions. 
For the isotropic case where $D=I$ and $\ps \propto\e^{-\Phi}$, the Bakry-\'Emery curvature condition
$\nabla^2\Phi\ge\kappa I$ with $\kappa>0$ yields $\lambda_\mathrm{gap}\ge\kappa$
\cite{bakry2014analysis,pavliotis2014stochastic}. 
More generally, the existence of a Lyapunov function $W$, such that
$\mathcal{L}W\leq -cW+C\mathbb{1}_K$ for some $W\geq 1$, constants $c,C>0$ and closed set $K$,
implies a Poincar\'e inequality, 
which is equivalent to geometric ergodicity 
and and for the reversible part a positive gap is equivalent to $L^2(\mu)$ exponential ergodicity
(see e.g., \cite{bakry2008rate,down1995exponential,meyn1993stability})
\end{remark}

\subsection{Density- and Stratonovich-type additive functionals}
\label{sec:setup_observables}
With the dynamics now in place, we turn to the dynamical observables
obtained by time-averaging a single path over the observation window $[0,t]$.
In particular, we study two distinct classes, the \emph{empirical density} and the
\emph{empirical current}.

\subsubsection{Density-type functional}
\begin{definition}[Density-type functional]
\label{def:empirical_density}
For a bounded measurable function $V\in L^\infty(\mathbb{R}^d)$ the
\emph{density -type functional} (or "empirical density") is defined as
\begin{align}
  \overline{\rho}_t(V)
  \equiv \frac{1}{t}\int_0^t V(X_s)\dd s,
  \label{eq:def_density}
\end{align}
with $\rho_t(V)=t\overline{\rho}_t(V)$ the corresponding time-accumulated observable.
\end{definition}
Notably, the integral in Eq.~\eqref{eq:def_density} is an ordinary, pathwise
Lebesgue integral and therefore has vanishing quadratic variation. The
statistics of such density-type functionals are a classical subject in the
literature. For Brownian motion, the distribution of
occupation times goes back to L\'evy's arcsine law~\cite{levy1940}, and
Kac's formula~\cite{kac1949distributions} reduced the computation of
their generating function to a Schr\"odinger-type spectral problem.
Occupation- and local-time functionals are by now standard objects in
probability~\cite{lamperti1958,knight1969,mansuy2008,yen2013local} and
in statistical physics~\cite{majumdar2007brownian}, where they appear in problems ranging from
spin glasses~\cite{majumdar2002exact} and non-equilibrium coarsening
systems~\cite{dornic1998} 
to anomalous diffusion~\cite{dhar1999,de2001}
and the blinking of quantum dots~\cite{margolin2005}. They also emerge
in the context of fluorescent imaging~\cite{agmon2011} and in time-average
statistical mechanics specifically, the local-time fraction serves as a
propagator for time-averaged observables~\cite{lapolla2018unfolding,lapolla2020spectral,lapolla2019}.
For reversible dynamics the large time asymptotics are governed by the general Donsker-Varadhan
large deviation principle~\cite{donsker1975asymptotic1, donsker1975asymptotic2,donsker1976asymptotic,donsker1983asymptotic,touchette2009large,touchette2018introduction,hoppenau2016level},
with explicit large deviation results obtained for simple 
examples~\cite{nyawo2018dynamical,angeletti2016diffusions,du2020dynamical},
while at finite times the variance, and correlations of bounded additive
functionals can be obtained from the eigenspectrum of the (potentially non-self-adjoint) tilted
generator~\cite{lapolla2020spectral}. Beyond occupation and local
times, density-type functionals have been applied---depending on the
choice of $V$---to diverse phenomena such as fluctuating
interfaces~\cite{majumdar2004exact}, financial options in the
Black-Scholes framework and Sinai-type random
chains~\cite{yor2001exponential,oshanin2013}, the statistics of
temperature records, and elastic
interfaces~\cite{majumdar2007brownian}, as well as models of turbulent
flow~\cite{baule2006}.

For a normalized differentiable window function $V_z^h(x)\geq0$ with coarse-graining scale
$h$ centered at $z\in\mathbb{R}^d$, the \emph{empirical density} yields a coarse-grained
local density of the underlying process~\cite{dieball2022mathematical,dieball2022coarse}. 
More generally, $\overline{\rho}_t(V)$ is the time average of $V$ along the
trajectory and is therefore an estimator for the stationary average $\E^\mu[V]$
\cite{dieball2022coarse, dieball2022mathematical}.
Such ensemble-averages are often the target of
molecular dynamics simulations and Markov chain Monte Carlo methods,
where time averages along a trajectory serve as estimators of
steady-state averages of (otherwise computationally inaccessible) 
high-dimensional
distributions~\cite{pavliotis2014stochastic,stoltz2010free,lelievre2016partial,leimkuhler2016computation,leimkuhler2015molecular,duncan2016variance}.

\subsubsection{Stratonovich current-type functional}
\begin{definition}[Current functional]
\label{def:empirical_current}
For a bounded, continuously differentiable vector field with bounded
derivatives, $U\in C^1_b(\mathbb{R}^d;\mathbb{R}^d)$, the \emph{current-type functional} (or empirical
current) is the Stratonovich integral
\begin{align}
  \overline{J}_t(U)
  \equiv \frac{1}{t}\int_0^t U(X_s)\circ\dd X_s,
  \label{eq:def_current}
\end{align}
with $J_t(U)=t\overline{J}_t(U)$ the corresponding time-accumulated observable,
where $\circ$ denotes Stratonovich integration and $U(X_s)\circ\dd X_s=\sum_i U_i(X_s)\circ\dd X^i_s$. 
\end{definition}

\begin{remark}[Motivation for Stratonovich functionals]
\label{rem:strato_motivation}
The convention in Eq.~\eqref{eq:def_current}
as a Stratonovich-type integral
\emph{is fixed by the physics of the problem},
since a physical current nominally ought to be anti-symmetric under time-reversal, i.e., for a fixed path 
$(X_s)_{0\le s\le t}$ it holds by the ``mid-point'' rule that
\begin{align*}\overline{J}_t(U)[(X_s)_{0\le s\le t}]
=
-\overline{J}_t(U)[(X_{t-s})_{0\le s\le t}].
\end{align*}
For a normalized differentiable window function $V_z^h(x)$,
Stratonovich integrals further generalize the notion of a continuity equation for
additive functionals to individual sample paths. With $\partial_{z_i}V^h_z(x)=-\partial_{x_i}V^h_z(x)$ 
and $\dd V^h_z(X_s)=\nabla V^h_z(X_s)\circ \dd X_s$ 
due to the Stratonovich convention it holds
\begin{align*}
  -\sum_{i}^{}\partial_{z_i}\bigl[t\overline{J}_t(V^h_z\hat e_i)\bigr]
  &=\sum_{i}^{}\int_0^t(\partial_{x_i}V^h_z)(X_s) \hat e_i\circ \dd X_s
  = \int_0^t(\nabla_x V^h_z)(X_s)\circ \dd X_s
  \nonumber
  \\
  &=V^h_z(X_t) -  V^h_z(X_0)
   =\partial_t \int_0^{t}V^h_z(X_s) \dd s,
\end{align*}
and the path-wise
continuity equation,
$\partial_t\bigl[t\overline{\rho}_t(V^h_z)\bigr]
 =-\sum_{i=1}^{d}\partial_{z_i}\bigl[t\overline{J}_t(V^h_z\hat e_i)\bigr],$ 
holds.
It\^o integrals lack both of
these properties~\cite{seifert2012stochastic, seifert2025stochastic,dieball2022coarse, dieball2022mathematical}. 
Historically, the Stratonovich interpretation 
is typically preferred in physical applications
since it arises in the limiting procedure from colored noise to white noise in the Langevin equation via the Wong-Zakai theorem \cite{Wong1965,Arnold1990}
and the stochastic chain rule takes its classical form~\cite{gardiner1985handbook,pavliotis2014stochastic}.
\end{remark}

The value of an additive functional at time $t$ is fixed by the entire
path $(X_s)_{0\le s\le t}$, whose increments are correlated through the
stochastic dynamics. Both $\overline{\rho}_t(V)$ and $\overline{J}_t(U)$
are therefore random variables with intricate, generically non-Gaussian
fluctuations, strongly affected by the mixing properties of the
underlying process as well as by the observable $V$ or $U$. Notably, in
practical applications typically only $V(X_t)$ and $U(X_t)$ are
observable, whereas neither the underlying process $X_t$ nor the precise
functions $U,V$ are generally known.
We are particularly interested in vector fields that are \emph{not} gradients of a potential.

Stratonovich-type functionals are more recent and much less studied than their
density-type counterparts. However, for suitable choices 
of the vector field $U$ they realize the generalized currents central to non-equilibrium statistical mechanics and stochastic thermodynamics.
For example, the constant choice $U=e_i$ gives the mean displacement along direction $e_i$ over $[0,t]$,
$t^{-1}(X^i_t-X^i_0)$, e.g.~of a probe in a
single-molecule experiment~\cite{touchette2018introduction}. The choice
$U=f$ measures the rate of work done by an external force $f$ on $X_t$ over the same
interval~\cite{van2003stationary,douarche2006work,engel2009asymptotics}
(closely related to the dissipated heat~\cite{qian2001nonequilibrium,van2004extended}), 
$U=D^{-1}\vs$ yields the steady-state entropy production rate~\cite{seifert2012stochastic,bo2019functionals}.
Moreover, given the normalized differentiable window function
$V_z^h\geq0$ centered at $z$ with $\int_{\mathbb{R}^d} V^h_z dx=1$
from above, the choice $U=V^h_z \hat n$ (with $\hat n \in \mathbb{R}^d$)
yields 
a coarse-grained estimator $\overline{J}_t(V_z^h \hat n)$ of the local steady-state current $\js$
along $\hat n$ at $z$ with scale $h$ \cite{dieball2022mathematical,dieball2022coarse}.

The asymptotic large deviation regime is captured by the intermediate
level 2.5 rate function for the joint empirical density and
current~\cite{maes2008steady,barato2015formal,chetrite2015nonequilibrium,hoppenau2016level}
and by macroscopic fluctuation
theory~\cite{bertini2015flows,bertini2002macroscopic}. A large-deviation
description for the empirical current alone, obtained from the joint rate function via the
contraction principle~\cite{touchette2009large}, is notoriously
involved and explicit expressions derived via the Gärtner-Ellis
theorem~\cite{dembo1998,den2008,touchette2009large,touchette2018}, 
require solving a non-trivial spectral problem
for a non-self-adjoint tilted operator (see Section~\ref{sec:tilted_generators}). This approach remains
limited to a very small number of 
simple models often based on low-dimensional stochastic differential equations~\cite{mehl2008large,fischer2018large,gupta2017,
verley2014unlikely,tsobgni2016large,du2023dynamical,angeletti2016diffusions}.
Therefore, one typically resorts
to either a numerical solution of this 
spectral problem or to dedicated simulation methods such as cloning or
importance sampling~\cite{giardina2006direct,lecomte2007numerical,ferre2018adaptive,touchetteharris,angeli2019rare}
and machine learning 
tools~\cite{oakes2020deep,rose2021reinforcement,das2021reinforcement}.
Moreover, since large-deviation
theory concerns asymptotic timescales, estimating the rate function over
its full range from finite experimental data proves to be a daunting task~\cite{ciliberto2004,martinez2016,rohwer2015convergence}.

At finite times, exact results for the variance and correlations of
current fluctuations are available via stochastic
calculus~\cite{dieball2022mathematical,dieball2022coarse},
and the thermodynamic uncertainty relation (TUR) and its
refinements~\cite{barato2015,gingrich2016dissipation,pietzonka2017,horowitz2020thermodynamic,koyuk2020thermodynamic,dieball2023direct,stutzer2026stochastic}
impose a fundamental constraint on the variance, with important applications
in the context of thermodynamic inference~\cite{Seifert2019,dieball2023direct,horowitz2020thermodynamic}.
Notably, these applications hinge of the feasibility to adequately sample the second moment (or higher order 
moments~\cite{dechant2020,Dechant2021,Wampler2021,Manikandan2022,Ray2023}) of the current functional, which is a priori very challenging to control. Moreover, inference of free energy differences
from measurements of the non-equilibrium work (a current-type functional) based 
on the Jarzynski~\cite{jarzynski1997nonequilibrium} 
or Crooks relation~\cite{crooks1999} 
require sufficient sampling of the tails of the work distribution~\cite{collin2005verification,alemany2015free,ribezzi2014free,Pohorille2010,Hummer2001,Zuckerman2002,Jarzynski2006,Gore2003,engel2009asymptotics}. 

Required but missing from this picture is a non-asymptotic control of the
\emph{full tail} of $\overline{J}_t(U)$ at finite $t$, beyond the
variance information of the thermodynamic uncertainty relation (TUR)
and its inverse (iTUR).
Moreover, this raises the physically intriguing complementary question whether and how the physical properties of the process $X_t$ 
---in particular the free-energy dissipation---bound the tail of the distribution from above,
complementary to the way the TUR limits the variance of currents from below
or the iTUR from above.

\begin{remark}[Accessibility from time-series]
Time-integrated observables have the practical benefit that there are directly
accessible from measured time-series in a ``model-free'' manner without any knowledge of the underlying 
dynamics.
For a measured trajectory $(X_{i\Delta t})_{i=0,1,\ldots,N}$, sampled at (sufficient) time-resolution
$\Delta t$, the integrals characterizing $\overline{\rho}_t$ and $\overline{J}_t$ are
practically evaluated via Riemann sums where Stratonovich-type functionals use
increments with mid-point convention.
In particular, varying the choice of $V$ and $U$ (i.e., probing different physical observables)
does not require a new measurement and different (physical) properties can be probed
from the same time-series
(for further details see e.g., \cite{dieball2022coarse,dieball2022mathematical,dieball2024non}).
\end{remark}

\subsubsection{Stationary expectations of additive functionals}
\label{sec:functional_means}
Since our concentration inequalities will assess finite-time deviations
of $\overline{\rho}_t$ and $\overline{J}_t$ \emph{around} 
their respective stationary expectations, it is useful to state them here once
(see also~\cite{dieball2022coarse, dieball2022mathematical}).

\begin{proposition}[Stationary expectation values]\label{prop:mean}
Let Assumptions~\ref{ass:dynamics} hold and consider
additive functionals of density-type $\overline{\rho}_t$ (Definition~\ref{def:empirical_density})
and current-type $\overline{J}_t$ (Definition~\ref{def:empirical_current}).
For every $t>0$ the stationary (time-independent) means
are given by  
\begin{align}
\E^\mu\bigl[\overline{\rho}_t(V)\bigr] &= \int_{\mathbb{R}^d} V \dd \mu = \E^\mu[V],
\\
\E^\mu\bigl[\overline{J}_t(U)\bigr] &= \int_{\mathbb{R}^d} \vs \cdot U \dd\mu = \E^\mu[\vs \cdot U ] =\E^\mu[W],
\end{align}
with local mean velocity $\vs$ from Eq.~\eqref{eq:drift_split}
and $W\equiv \vs\cdot U$.
\end{proposition}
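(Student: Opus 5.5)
The density-type statement follows directly from Fubini and stationarity: $\E^\mu[\overline{\rho}_t(V)]=\frac1t\int_0^t\E^\mu[V(X_s)]\,\dd s=\frac1t\int_0^t\langle P_sV,1\rangle_\mu\,\dd s=\E^\mu[V]$. Here we use that $V$ is bounded, so the integrand is integrable, and that $\mu$ is invariant, so $\langle P_s V,1\rangle_\mu=\langle V,1\rangle_\mu$.

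For the current, I would convert the Stratonovich integral to It\^o form. Using $\dd X^i_s=b_i(X_s)\dd s+(\sigma\dd W_s)_i$ and $\dd\langle X^i,X^j\rangle_s=2D_{ij}\dd s$,
\begin{align*}
J_t(U)=\int_0^t U(X_s)\cdot\sigma\,\dd W_s+\int_0^t\bigl(b\cdot U+\nabla\cdot(DU)\bigr)(X_s)\,\dd s .
\end{align*}
Since $U$ is bounded and $D$ is constant, the It\^o integral is a true martingale with $\E^\mu\bigl[\int_0^t|\sigma^\top U(X_s)|^2\dd s\bigr]<\infty$, so it has mean zero. Because $U\in C^1_b$, the drift term $b\cdot U+\nabla\cdot(DU)$ is $C^1$. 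Applying Fubini and stationarity as in the density case gives $\E^\mu[\overline J_t(U)]=\int(b\cdot U+\nabla\cdot(DU))\ps\,\dd x$, provided $b\cdot U\in L^1(\mu)$.

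The remaining step is integration by parts. With $b\ps=\js+D\nabla\ps$,
\begin{align*}
\int\bigl(b\cdot U+\nabla\cdot(DU)\bigr)\ps\,\dd x=\int \js\cdot U\,\dd x+\int\bigl(D\nabla\ps\cdot U+\ps\nabla\cdot(DU)\bigr)\dd x .
\end{align*}
The second bracket equals $\nabla\cdot(\ps DU)$ because $D$ is symmetric and constant, so its integral vanishes. The first term is $\int\vs\cdot U\,\dd\mu=\E^\mu[W]$, as claimed.

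I expect the main obstacle to be justifying the integrability and the vanishing boundary term. Integrability of $b\cdot U$ under $\mu$ holds because $b\ps=\js+D\nabla\ps$, with $U$ bounded. The boundary term requires $\ps DU$ to decay so that $\int\nabla\cdot(\ps DU)\,\dd x=0$. I would handle this by truncating with a smooth cutoff $\chi_R$ and letting $R\to\infty$, using boundedness of $U$ together with $\ps,\nabla\ps\in L^1$. Alternatively, I would use the generator: the identity $\int\mathcal{L}f\,\dd\mu=0$ on the core $\mathcal{C}$ from Assumption~\ref{ass:dynamics}, together with the representation $\nabla\ps/\ps=D^{-1}(b-\vs)$, integrates the divergence term by parts in $L^2(\mu)$. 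On a bounded subdomain, the reflecting (no-flux) boundary conditions make the boundary term vanish by construction.
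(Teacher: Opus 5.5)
Your proof is correct and follows the paper's route: Stratonovich-to-It\^o conversion with correction $\nabla\cdot(DU)$, a zero-mean It\^o integral, stationarity, and one integration by parts against $\ps$ combined with $b=D\nabla\log\ps+\vs$; your grouping $D\nabla\ps\cdot U+\ps\nabla\cdot(DU)=\nabla\cdot(\ps DU)$ is the same computation done in one step. The paper simply takes integrability and $\int\nabla\cdot(\ps DU)\,\dd x=0$ for granted, so your side remarks go beyond it, but two of them need correcting: rewriting $b\ps=\js+D\nabla\ps$ does not by itself give $b\in L^1(\mu)$ (instead use the linear growth of the Lipschitz drift and the exponential moments of $\mu$ implied by the Poincar\'e inequality), and on a bounded domain the no-flux condition $\js\cdot n=0$ does not make $\int_{\partial\Omega}\ps\,(DU)\cdot n\,\dd S$ vanish, since for the conormally reflected diffusion this term is cancelled by the local-time contribution of the reflection to $\int U\circ\dd X$.
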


\begin{proof}
First note that at stationarity $X_s\sim\mu$ for all $s$.
For the density functional we directly have,
\begin{align}
  \E^\mu\bigl[\overline{\rho}_t(V)\bigr]
  = \frac{1}{t}\int_0^t \E^\mu[V(X_s)] \dd s
  = \E^\mu[V(X)]
  = \int_{\mathbb{R}^d} V\dd\mu 
  = \E^\mu[V].
\end{align}
For the current functional, we first account for the Stratonovich correction via
\begin{align}
  U(X_s)\circ\dd X_s = U(X_s)\cdot\dd X_s + [\nabla\cdot(DU)](X_s) \dd s ,
\end{align}
where we consider additive noise, i.e.~constant $D$, and the correction 
term is understood as $\sum_{ij}D_{ij}\partial_i U_j = \nabla\cdot (DU)$ by the symmetry
of $D$. Inserting the stochastic differential equation~\eqref{eq:sde_setup},
the It\^o integral against the Wiener increment has zero mean and we obtain
\begin{align}
  \E^\mu\bigl[\overline{J}_t(U)\bigr]
  = \int_{\mathbb{R}^d} U\cdot b \dd\mu 
  + 
  \int_{\mathbb{R}^d}\nabla\cdot (DU)\dd\mu.
\end{align}
For the second term we use the product rule for divergences, i.e., 
$\nabla \cdot (DU\ps)= \ps\nabla\cdot (DU) + DU\cdot \nabla\ps$
and integrate, with $\dd\mu = \ps\dd x$, to obtain
\begin{align}
0=\int_{\mathbb{R}^d} \nabla \cdot (DU\ps) \dd x
= \int_{\mathbb{R}^d} \nabla\cdot (DU) \ps \dd x  + \int_{\mathbb{R}^d} DU\cdot\nabla\ps \dd x.
\end{align}
Re-arranging, using that $D$ is symmetric, and $\nabla\ps /\ps = \nabla\log\ps$ gives
\begin{align}
  \int_{\mathbb{R}^d} \nabla\cdot (DU) \ps \dd x
  = -\int_{\mathbb{R}^d} DU\cdot\nabla\ps \dd x
  = -\int_{\mathbb{R}^d} U\cdot D\nabla\log\ps \dd\mu.
\end{align}
With the drift decomposition in Eq.~\eqref{eq:drift_split} we therefore have
\begin{align}
  \E^\mu\bigl[\overline{J}_t(U)\bigr]
  = \int_{\mathbb{R}^d} U\cdot(b - D\nabla\log\ps)\dd\mu
  = \int_{\mathbb{R}^d} U\cdot\vs \dd\mu = \E^\mu[\vs\cdot U] =\E^\mu[W],
\end{align}
which completes the proof.
\end{proof}

\section{Feynman-Kac semigroups, tilted generators, and a first Cram\'er-Chernoff bound}
\label{sec:feynman_kac}
Our strategy for proving concentration inequalities
for density- $\overline{\rho}_t(V)$
and current-functionals $\overline{J}_t(U)$
rests on a dynamic version of the Cram\'er-Chernoff method,
which bounds the tail
probability of a random variable by controlling the moment generating function or an appropriate upper bound on it (see e.g.~\cite{boucheron2013concentration})
For an additive path-observable $A_t$ (density- or current-type), 
applying Chernoff's inequality to the accumulated
functional $A_t = t \overline{A}_t$ gives
\begin{align}
  \mathbb{P}^\nu\left(\overline{A}_t - \E^\mu[\overline{A}_t] \geq a\right)
  &= \mathbb{P}^\nu\left(A_t \geq t\bigl(\E^\mu[\overline{A}_t] + a\bigr)\right)
  \nonumber\\
  &\leq \inf_{k>0} \e^{-k t(\E^\mu[\overline{A}_t]+a)}\E^\nu\bigl[\e^{k A_t}\bigr],
  \label{eq:chernoff_functional}
\end{align}
where $\E^\mu[\overline{A}_t]$ is the stationary expectation with respect to the invariant
measure $\mu$ (independent of $t$), $a>0$ the
deviation, and the infimum over the (tilt) parameter $k$ gives the tightest bound for a given $t$
and $a$. 
Hereby, $\E^\nu\bigl[\e^{k A_t}\bigr]$ is the moment generating function
of $A_t$ for the Markov process started from initial measure $\nu$, i.e., $X_0 \sim \nu$. 
Note that by design we  center $\overline{A}_t$ by its stationary mean and \emph{not} with respect to the initial measure, 
which will result in a prefactor later on that accounts for the discrepancy
between $\nu$ and $\mu$.

The central object that links the moment generating function to the
operator-theoretic framework 
of Markov processes~\cite{bakry2014analysis} is the
\emph{Feynman-Kac semigroup}, a family of linear operators on $L^2(\mu)$
whose action encodes exponential functionals (i.e., the moment generating function) of the process.
In the remainder of this section we define the Feynman-Kac
semigroups for both types of observables, establish their basic properties, and derive
their infinitesimal generators---the so-called \emph{tilted generators}---which describe
the weighted (i.e., tilted) dynamics at the infinitesimal level.
Since the tilted generators are in general not self-adjoint (in the case of Stratonovich-type functionals \emph{even if the underlying dynamics obeys detailed balance}), we later symmetrize the
tilted generators in $L^2(\mu)$ and introduce the corresponding principal eigenvalue
$\Lambda^A(k)$. This eigenvalue gives an upper bound on the operator norm of the
Feynman-Kac semigroup, which is the key spectral input to all bounds that follow.

\subsection{Feynman-Kac semigroups}
\label{sec:fk_semigroup}
We begin with the classical Feynman-Kac semigroup associated with density-type observables,
which can be viewed as a weighted version 
of the standard Markov semigroup $(P_t)_{t \geq 0}$~\cite{bakry2014analysis}.

\begin{definition}[Feynman-Kac semigroup for density observables]
\label{def:fk_density}
Let $V : \mathbb{R}^d \to \mathbb{R}$ be a bounded measurable function
and $k \in \mathbb{R}$.
The \emph{Feynman-Kac semigroup} associated with $V$
is the family of operators $(P_t^{k,\rho})_{t \geq 0}$
on $L^2(\mu)$ defined by
\begin{align}
  \bigl(P_t^{k,\rho} f\bigr)(x)
  \equiv
  \E^x\left[
    f(X_t)
    \exp\left(k \int_0^t V(X_s)\dd s\right)
  \right],
  \label{eq:fk_density}
\end{align}
for $f \in L^2(\mu)$, $x \in \mathbb{R}^d$, $t \geq 0$.
\end{definition}

Although Kac originally considered only density-type observables \cite{kac1951}, an 
analogous semigroup is obtained by considering instead a Stratonovich integral in the exponent~\cite{chetrite2013nonequilibrium,chetrite2015nonequilibrium,touchette2018,dieball2023feynman}. 
The corresponding infinitesimal generator turns out to have a
qualitatively different mathematical structure due to the stochastic
nature of the integral, as we discuss in more detail in
Section~\ref{sec:tilted_current}.

\begin{definition}[Feynman-Kac semigroup for current observables]
\label{def:fk_current}
Let $U\in C^1_b(\mathbb{R}^d;\mathbb{R}^d)$ be a bounded, continuously
differentiable vector field with bounded derivatives, and $k\in\mathbb{R}$.
The \emph{Feynman-Kac semigroup} associated with $U$ is the family
$(P_t^{k,J})_{t\geq 0}$ on $L^2(\mu)$ defined by
\begin{align}
  \bigl(P_t^{k,J} f\bigr)(x)
  \equiv
  \E^x\left[
    f(X_t)\exp\left(k \int_0^t U(X_s)\circ\dd X_s\right)
  \right],
  \label{eq:fk_current}
\end{align}
for $f \in L^2(\mu)$, $x \in \mathbb{R}^d$, $t \geq 0$, where $\circ$ denotes
Stratonovich integration and $U(X_\tau)\circ\dd X_\tau=\sum_i U_i(X_\tau)\circ\dd X^i_\tau$.
\end{definition}
\subsubsection{Connection to the moment generating function and a first bound}
\label{subsec:fk_mgf}
Having introduced both Feynman-Kac semigroups, we now
make their connection to the moment generating function explicit. Let
$\nu\ll\mu$ with $\dd\nu/\dd\mu\in L^2(\mu)$ be an initial probability measure that is absolutely continuous with respect to $\mu$. 
Then, the moment generating function of the accumulated observable $A_t$ admits the
representation
\begin{align}
  \E^\nu\bigl[\e^{kA_t}\bigr]
  &= \int_{\mathbb{R}^d} \E^x\bigl[\e^{kA_t}\bigr]\dd\nu(x) \nonumber\\
  &= \int_{\mathbb{R}^d} \bigl(P_t^{k,A}\mathbf{1}\bigr)(x)\frac{\dd\nu}{\dd\mu}(x)\dd\mu(x)
     \nonumber\\
  &= \biggl\langle \frac{\dd\nu}{\dd\mu}, P_t^{k,A}\mathbf{1}
     \biggr\rangle_{L^2(\mu)},
  \label{eq:mgf_fk}
\end{align}
where $A\in\{\rho,J\}$, the second equality uses
$(P_t^{k,A}\mathbf 1)(x)=\E^x[\e^{kA_t}]$ together with the
Radon-Nikodym derivative $\dd\nu=\tfrac{\dd\nu}{\dd\mu}\dd\mu$, and $\mathbf 1$
denotes the constant function equal to one.

Applying the Cauchy-Schwarz inequality and using $\|\mathbf 1\|_{L^2(\mu)}=1$
(since $\mu$ is a probability measure) directly yields a first bound,
\begin{align}
  \E^\nu\bigl[\e^{kA_t}\bigr]
  \leq
  \Bigl\| \frac{\dd\nu}{\dd\mu} \Bigr\|_{L^2(\mu)}\bigl\| P_t^{k,A} \bigr\|_{L^2(\mu)},
  \label{eq:mgf_cs}
\end{align}
where $\|P_t^{k,A}\|_{L^2(\mu)}$ denotes the operator norm on $L^2(\mu)$ and
$\|\dd\nu/\dd\mu\|_{L^2(\mu)}$ encodes the dependence on the initial
distribution. Importantly, this gives a \emph{finite-time prefactor} (i.e., a initial condition correction)
that distinguishes 
the non-asymptotic concentration bound from a corresponding asymptotic 
approach as done, e.g.\ in ~large deviation theory
where the correction due to the initial condition gets lost as $t\to\infty$
(see Remark~\ref{rem:ldt_sharpness}).
In particular, Eq.~\eqref{eq:mgf_cs} reduces the problem of bounding the moment
generating function of additive path observables, and hence, their tail
probabilities via Chernoff's inequality~\eqref{eq:chernoff_functional},
to a single quantity in terms of the operator norm $\|P_t^{k,A}\|_{L^2(\mu)}$.
Establishing a bound on this norm is therefore the next central technical task, and it
is what the remainder of this section builds toward.

\subsubsection{Comparison with Markov semigroups}
The Feynman-Kac semigroups $(P_t^{k,\rho})_{t\geq 0}$ and $(P_t^{k,J})_{t\geq 0}$
have the same formal semigroup structure 
of the Markov semigroup $(P_t)_{t\geq 0}$.
However, they differ since Markov semigroups are \emph{contractions} 
(see e.g., \cite{bakry2014analysis, ethier2009markov}), i.e., for $f\in L^2(\mu)$, $t\geq 0$,
\begin{align}
  \|P_t f\|_{L^2(\mu)} \leq \|f\|_{L^2(\mu)},
\end{align}
which follows from positivity, the conservation property 
$P_t\mathbf 1=\mathbf 1$, and invariance of $\mu$
(i.e., $\int P_t g\dd\mu=\int g\dd\mu$).
Indeed, the exponential tilting destroys the conservation property,
when is $A_t$ non-trivial, $(P_t^{k,A}\mathbf 1)(x)=
\E^x[\e^{kA_t}]\neq 1$, and the constant function $\mathbf 1$ 
is not conserved.
By evaluating the operator 
norm with $\mathbf 1$ and using
$\|\cdot\|_{L^1(\mu)}\leq\|\cdot\|_{L^2(\mu)}$ we 
obtain $\|P_t^{k,A}\|_{L^2(\mu)} \geq \E^\mu[\e^{kA_t}]$, i.e., 
the operator norm bounded by the stationary moment generating function
from below, and a contraction bound such as
$\|P_t^{k,A}\|_{L^2(\mu)}\leq 1$ is in general lost and can even grow in $t$.
The missing contraction property is therefore a crucial 
challenge in bounding the operator norm, and motivates a more detailed study of the tilted generator, in particular, its spectral properties in 
the next section, which will allow us to control the growth rate of the norm.

\subsection{Tilted generators}
\label{sec:tilted_generators}
Each Feynman-Kac semigroup is strongly
continuous on $L^2(\mu)$ and therefore
admits an infinitesimal generator (see e.g., \cite{bakry2014analysis, pavliotis2014stochastic, ethier2009markov}).
Hereby, the parameter $k$ that appears in 
Definitions~\ref{def:empirical_density} and~\ref{def:empirical_current}
exponentially \emph{tilts} the path measure by $\e^{kA_t}$, which re-weights 
trajectories according to the value of the observable.
The corresponding generators $\mathcal{L}_k^A$ of the resulting semigroups, with $A\in\{\rho,J\}$,
are accordingly called \emph{tilted generators}. 
Notably, at $k=0$ we have no tilting and
$\mathcal{L}_0^A=\mathcal{L}$, such that $\mathcal{L}_k^A$ can be considered deformation
of the Markov generator that, for $k\neq0$, differs from it by terms encoding the respective path observable.
Just as $\mathcal{L}$ governs the
dynamics of the Markov semigroup via the
backward equation $\partial_t P_t f = \mathcal{L} P_t f$ \cite{bakry2014analysis, pavliotis2014stochastic, ethier2009markov},
the tilted generator $\mathcal{L}_k^A$ 
governs the evolution of the Feynman-Kac semigroup at the infinitesimal level via 
\begin{align}
  \partial_t P_t^{k,A} f
  = \mathcal{L}_k^A P_t^{k,A} f,
  \qquad f \in D(\mathcal{L}_k^A).
  \label{eq:backward_fk}
\end{align}

\subsubsection{Tilted generator for density observables}
\label{sec:tilted_density}
We start our discussion on tilted generators for density observables.
The derivation follows the textbook Feynman-Kac formula, see e.g.~\cite{pavliotis2014stochastic,bakry2014analysis,kac1949distributions,dieball2023feynman}, and the tilt enters $\mathcal{L}$ as a simple additive potential.

\begin{proposition}[Tilted generator for density observables]
\label{prop:tilted_gen_density}
The semigroup $(P_t^{k,\rho})_{t \geq 0}$
is a strongly continuous semigroup on $L^2(\mu)$
with infinitesimal generator
\begin{align}
  \mathcal{L}_k^\rho \equiv  b\cdot\nabla + \nabla\cdot D\nabla + kV = \mathcal{L} + kV,
  \label{eq:tilted_gen_density}
\end{align}
where $\mathcal{L}$ is the generator of the It\^o diffusion~\eqref{eq:sde_setup}
and $kV$ denotes the multiplication operator
$f\mapsto kV \cdot f$ and $D(\mathcal{L}_k^\rho) = D(\mathcal{L})$.
\end{proposition}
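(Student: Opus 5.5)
The plan is to obtain $(P_t^{k,\rho})$ as a bounded perturbation of the Markov semigroup $(P_t)$, instead of working with the path expectation directly. Since $V\in L^\infty$, the multiplication operator $M_{kV}f=kVf$ is bounded on $L^2(\mu)$, with $\|M_{kV}\|\le |k|\,\|V\|_{L^\infty(\mu)}$. By the bounded perturbation theorem (Phillips; see e.g. Engel--Nagel), the operator $\mathcal{L}+kV$ with domain $D(\mathcal{L})$ generates a strongly continuous semigroup $(T_t)_{t\ge0}$ on $L^2(\mu)$. This semigroup is the unique solution of the Duhamel equation
\begin{align*}
T_tf=P_tf+\int_0^t P_{t-s}\,(kV\,T_sf)\,\dd s ,
\end{align*}
and it satisfies $\|T_t\|\le \e^{t|k|\|V\|_\infty}$. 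It then remains to show $P_t^{k,\rho}=T_t$, because this gives both the generator and its domain $D(\mathcal{L}_k^\rho)=D(\mathcal{L})$.

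For the identification I would first take bounded $f$ and write $A_s=\int_0^s V(X_r)\,\dd r$. Since this is a Lebesgue integral, the identity $\e^{kA_t}=1+\int_0^t kV(X_s)\,\e^{kA_s}\,\dd s$ holds pathwise. Multiplying by $f(X_t)$ and taking $\E^x$, Fubini and the Markov property at time $s$ give
\begin{align*}
\E^x\bigl[f(X_t)V(X_s)\e^{kA_s}\bigr]=\bigl(P^{k,\rho}_s(V\,P_{t-s}f)\bigr)(x),
\end{align*}
and therefore $P_t^{k,\rho}f=P_tf+\int_0^t P_s^{k,\rho}\,(kV\,P_{t-s}f)\,\dd s$. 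This is the Duhamel identity with the factors in the opposite order. Alternatively, if one applies the Markov property at the later time, the identity comes out in the order matching the $T_t$ equation above. Either form of the Volterra equation has a unique locally bounded solution, which follows by Gronwall or by iterating it into the Dyson series. Hence $P_t^{k,\rho}f=T_tf$ for bounded $f$.

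Before using uniqueness, I need to know that $P_t^{k,\rho}$ is a bounded operator on $L^2(\mu)$. This follows from the pointwise bound $|P_t^{k,\rho}f|\le \e^{t|k|\|V\|_\infty}P_t|f|$ together with the $L^2(\mu)$-contractivity of $P_t$. With boundedness in hand, density of bounded functions in $L^2(\mu)$ extends the equality $P_t^{k,\rho}=T_t$ to all of $L^2(\mu)$. The semigroup property of $P_t^{k,\rho}$ then follows from multiplicativity of the functional, $A_{t+s}=A_s+A_t\circ\theta_s$, combined with the Markov property. Strong continuity follows from $\|P_t^{k,\rho}f-P_tf\|\le(\e^{t|k|\|V\|_\infty}-1)\|f\|$.

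The main obstacle is rigor in the identification step. Measurability and integrability are needed for Fubini, and the two Duhamel orderings must be handled carefully. In particular, when the Markov property is applied at the earlier time the identity comes out as $P_s^{k,\rho}\,kV\,P_{t-s}$, and uniqueness for that form of the equation has to be justified separately. A cleaner alternative would be to show directly that $t^{-1}(P_t^{k,\rho}f-f)\to(\mathcal{L}+kV)f$ for $f$ in the core $\mathcal{C}$. This uses $t^{-1}(\e^{kA_t}-1)\to kV(X_0)$ boundedly, but it requires continuity of $V$ and a core argument, so I would prefer the perturbation route.
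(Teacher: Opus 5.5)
Your argument is correct, but it does not follow the paper, which gives no proof of this proposition. The paper defers to the textbook Feynman--Kac formula (It\^o's formula applied to $\e^{kA_s}u(t-s,X_s)$, or a direct computation of $t^{-1}(P_t^{k,\rho}f-f)$ on smooth $f$), remarking only that the tilt is an ordinary Lebesgue integral.

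You instead build the semigroup generated by $(\mathcal{L}+kV,D(\mathcal{L}))$ via bounded perturbation. You then identify it with $P_t^{k,\rho}$ through the Duhamel/Volterra equation and Gronwall uniqueness.

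This gives more than the cited computation:
\begin{itemize}
\item strong continuity and the exact domain $D(\mathcal{L}_k^\rho)=D(\mathcal{L})$ come for free;
\item it works for merely bounded measurable $V$, as allowed in Definition~\ref{def:empirical_density}.
\end{itemize}
The generator computation, by contrast, naturally needs continuous $V$ and smooth $f$, plus a separate core/closure argument to pin down the domain, as you note for your fallback.

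What your route does not give is a transfer to Proposition~\ref{prop:tilted_gen_current}. There the perturbation of $\mathcal{L}$ contains the first-order operator $2kU\cdot D\nabla$, which is unbounded on $L^2(\mu)$. So the bounded perturbation theorem is unavailable, and one needs relatively bounded perturbation theory or the It\^o/Girsanov computation the paper cites.

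Two of your stated worries are harmless:
\begin{itemize}
\item For $D_t\equiv P_t^{k,\rho}-T_t$, the reversed-order identity gives $\|D_t\|\le |k|\,\|V\|_{L^\infty(\mu)}\int_0^t\|D_s\|\,\dd s$, using $\|P_{t-s}\|\le 1$. The perturbed semigroup satisfies both variation-of-parameters formulas, so uniqueness for that ordering needs no separate treatment.
\item Once $P_t^{k,\rho}=T_t$ is established, your separate checks of the semigroup property and strong continuity are redundant.
\end{itemize}
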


The simplicity of this result follows as a direct consequence since
$\rho_t(V)=\int_0^t V(X_s)\dd s$ is an ordinary Lebesgue rather than a
stochastic integral.

\subsubsection{Tilted generator for current observables}
\label{sec:tilted_current}
The current functional, unlike the density
case, is typically not treated in the classical Feynman-Kac theory.
Here the exponent of $P_t^{k,J}$ contains the Stratonovich-type integral
$\int_0^t U(X_s)\circ\dd X_s$, whose non-trivial quadratic variation carries
over to the resulting generator and introduces new terms that act on the gradient structure of $\mathcal{L}$ in Eq.~\eqref{eq:generator_setup}.
The tilted generator for Stratonovich-type functionals can
be proven directly using It\^o's formula~\cite{dieball2023feynman},
the Cameron-Martin-Girsanov theorem~\cite{barato2015formal,chetrite2015nonequilibrium,chetrite2013nonequilibrium} or via functional calculus~\cite{dieball2023feynman}.

\begin{proposition}[Tilted generator for current observables]
\label{prop:tilted_gen_current}
Let $k\in\mathbb{R}$ and let Assumptions~\ref{ass:dynamics} hold, in particular, 
assume additive noise $D=\sigma\sigma^\top/2$. The semigroup
$(P_t^{k,J})_{t \geq 0}$ is a strongly continuous semigroup on $L^2(\mu)$ whose
infinitesimal generator acts on $f \in D(\mathcal{L})$ as
\begin{align}
  \mathcal{L}_k^J
  &=
  b \cdot (\nabla + kU)
  +
  (\nabla + kU) \cdot D(\nabla + kU),
  \label{eq:tilted_gen_current_short}
\end{align}
or equivalently, expanding for $f \in D(\mathcal{L})$ yields
\begin{align}
  \mathcal{L}_k^J f
  &=
  \mathcal{L}f
  +
  k\bigl(b \cdot U + \nabla \cdot DU\bigr) f + 2kU \cdot D \nabla f
  +
  k^2(U \cdot DU)f.
  \label{eq:tilted_gen_current_long}
\end{align}
\end{proposition}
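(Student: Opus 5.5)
The plan is to identify the generator by applying It\^o's formula to the product $f(X_t)\e^{kJ_t}$ and reading off the drift term. I will then handle the semigroup property and strong continuity separately.

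First I convert the Stratonovich integral to It\^o form, as in the proof of Proposition~\ref{prop:mean}:
\begin{align*}
\dd J_t = U(X_t)\cdot \dd X_t + [\nabla\cdot(DU)](X_t)\dd t = \bigl(b\cdot U + \nabla\cdot DU\bigr)(X_t)\dd t + U(X_t)\cdot\sigma\dd W_t .
\end{align*}
Hence $\dd\langle J\rangle_t = U\cdot \sigma\sigma^\top U\,\dd t = 2\,U\cdot DU\,\dd t$, and $\dd\langle X^i,J\rangle_t = 2(DU)_i\dd t$. Setting $M_t=\e^{kJ_t}$, It\^o's formula gives
\begin{align*}
\dd M_t = M_t\bigl[k\,\dd J_t + k^2\,U\cdot DU\,\dd t\bigr].
\end{align*}
For $f$ smooth and compactly supported (which I take in the core $\mathcal{C}$), the product rule yields
\begin{align*}
\dd\bigl(f(X_t)M_t\bigr) = M_t\Bigl[\mathcal{L}f + k(b\cdot U+\nabla\cdot DU)f + k^2(U\cdot DU)f + 2kU\cdot D\nabla f\Bigr](X_t)\dd t + \dd(\text{mart.}).
\end{align*}
The last drift term $2kU\cdot D\nabla f$ comes from the cross-variation $k\,\nabla f\cdot \dd\langle X,J\rangle$.

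The local martingale is a true martingale on $[0,t]$, since $U$ and $\nabla f$ are bounded and $\E^x[\e^{pkJ_t}]<\infty$. To see the latter, write $J_t = \int(b\cdot U+\nabla\cdot DU)\dd s + \int U\cdot\sigma \dd W$. The second term has bounded quadratic variation, so Novikov's criterion applies. The drift term needs $b\cdot U$ to be controlled; here I use Lipschitz growth of $b$, and alternatively the identity $\int U\circ \dd X$ combined with bounded $U$ and Gaussian moments of $X$. Taking expectations gives
\begin{align*}
P_t^{k,J}f(x) - f(x) = \int_0^t P_s^{k,J}(\mathcal{L}_k^J f)(x)\dd s,
\end{align*}
and dividing by $t\to0$ identifies the generator on the core as \eqref{eq:tilted_gen_current_long}. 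Expanding \eqref{eq:tilted_gen_current_short} shows the two forms agree: $(\nabla+kU)\cdot D(\nabla+kU)f = \nabla\cdot D\nabla f + 2kU\cdot D\nabla f + k(\nabla\cdot DU)f + k^2 (U\cdot DU) f$, using the symmetry of $D$.

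The semigroup property follows from the Markov property together with additivity $J_{t+s}=J_t+J_s\circ\theta_t$. Here $\theta_t$ denotes the time shift, and additivity holds for Stratonovich integrals along the path. For strong continuity and boundedness on $L^2(\mu)$, I plan to use a Girsanov/Doob-type rewriting. Stratonovich calculus gives the identity $J_t=\int U\cdot \dd X$ (It\^o) plus a bounded-rate drift, and the It\^o part is then split into an exponential martingale that defines a new diffusion with drift $b+2kDU$. Up to a bounded factor $\e^{ct}$ this yields $\|P_t^{k,J}\|\le C\e^{ct}$. Continuity as $t\to0$ then follows on the dense core from the identity above, and extends to all of $L^2(\mu)$ by density. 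I expect this step, the $L^2(\mu)$ boundedness with $b\cdot U$ potentially unbounded, to be the main obstacle. If the Girsanov change of measure does not preserve $\mu$ well enough, I would fall back on the a priori bound obtained from the quadratic-form computation in Lemma~\ref{lem:quad_identity}. Since the principal part is unchanged and $\mathcal{L}_k^J$ is a relatively bounded perturbation of $\mathcal{L}$, the perturbation theory of generators (Lumer–Phillips applied to $\mathcal{L}_k^J-\omega$) then gives the domain statement $D(\mathcal{L}_k^J)\supseteq D(\mathcal{L})$.
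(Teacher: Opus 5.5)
Your It\^o computation is correct. The Stratonovich correction, the brackets $\dd\langle J\rangle_t=2\,U\cdot DU\,\dd t$ and $\dd\langle X^i,J\rangle_t=2(DU)_i\,\dd t$, and the resulting drift of $f(X_t)\e^{kJ_t}$ give exactly \eqref{eq:tilted_gen_current_long}. The true-martingale step also works: for Lipschitz $b$, Gronwall gives $\sup_{s\le t}|X_s|\le C_t(1+|x|+\sup_{s\le t}|\sigma W_s|)$, so $\E^x[\e^{pkJ_s}]$ is finite uniformly in $s\le t$. This is the It\^o-formula derivation the paper relies on. The paper itself gives no proof, only references, and simply asserts strong continuity on $L^2(\mu)$.

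The gap is in that second part. Your strong-continuity argument rests entirely on local boundedness of $\|P_t^{k,J}\|_{L^2(\mu)}$. This includes turning the pointwise identity $P_t^{k,J}f-f=\int_0^tP_s^{k,J}\mathcal{L}_k^Jf\,\dd s$ into an $L^2(\mu)$ statement, continuity on a core, and extension by density. The Girsanov step does not deliver that bound. Removing $\exp(k\int U\cdot\sigma\,\dd W-k^2\int U\cdot DU\,\dd s)$ leaves the Feynman--Kac potential $k\,b\cdot U+k\nabla\cdot DU+k^2\,U\cdot DU$, which is unbounded whenever $b$ is. Moreover, the new diffusion with drift $b+2kDU$ no longer leaves $\mu$ invariant. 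Already for $\dd X=-X\,\dd t+\sqrt2\,\dd W$ with $U\equiv u$ one has $P_t^{k,J}\mathbf 1(x)=\exp[-kux(1-\e^{-t})+k^2u^2(1-\e^{-2t})/2]$. Hence $\sup_x\E^x[\e^{kJ_t}]=\infty$, and the factor left after Girsanov is not bounded by any $C\e^{ct}$. This holds even though $\langle\mathcal{L}_k^Jf,f\rangle_\mu\le k^2u^2\|f\|_{L^2(\mu)}^2$ in this example.

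What makes $L^2(\mu)$ bounds possible is the cancellation in the quadratic form:
$\langle 2kDU\cdot\nabla f,f\rangle_\mu=-k\langle\nabla\cdot DU+(D\nabla\log\ps)\cdot U,f^2\rangle_\mu$.
This replaces the unbounded $b\cdot U$ by $W=\vs\cdot U$, which is identically zero in the example above. So your fallback is the right mechanism.

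However, it needs a hypothesis absent from Assumption~\ref{ass:dynamics}, e.g.\ $\sup_x(kW+k^2Q)<\infty$ with $Q=U\cdot DU$. Without one, the $L^2(\mu)$ part of the statement is false. Take $d=2$, $D=I$, and $\ps\propto\e^{-\Phi}$ with $\Phi(x)=\sqrt{1+|x|^2}$; this is log-concave, so Poincar\'e holds. Take $b=-\nabla\Phi+c\,(-x_2,x_1)$, which is Lipschitz and $\mu$-invariant with $\vs=c\,(-x_2,x_1)$. Take $U=e_1$, so $J_t=X^1_t-X^1_0$. Variation of constants with $|\nabla\Phi|\le1$ gives $X^1_t-x_1\ge(R_{ct}x)_1-x_1-t+G$, where $R_{ct}$ is the rotation by angle $ct$ and $G\sim N(0,2t)$. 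Hence for $k>0$,
$P_t^{k,J}\mathbf 1(x)\ge\e^{k\ell_t\cdot x-kt+k^2t}$, with $\ell_t\cdot x\equiv(R_{ct}x)_1-x_1$ and $|\ell_t|=2|\sin(ct/2)|$.
This is not in $L^2(\mu)$ once $2k|\ell_t|>1$, e.g.\ for $c=k=1$, $t=\pi$.

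Even with such a hypothesis added, two steps remain. First, you need the range condition for Lumer--Phillips. Your relative-boundedness claim covers $2kU\cdot D\nabla$ via $\mathcal{E}(f,f)\le\|\mathcal{L}f\|\,\|f\|$, but not multiplication by $b\cdot U$. Second, you need to identify the semigroup $T_t$ so generated with the path-space $P_t^{k,J}$. One way is It\^o's formula for $s\mapsto(T_{t-s}f)(X_s)\e^{kJ_s}$, together with regularity of $T_{t-s}f$. Proposition~\ref{prop:fk_norm_bound} cannot be borrowed here, since it takes the present statement as input.
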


\begin{remark}[Structural comparison of tilted generators]
\label{rem:structural_comparison}
Propositions~\ref{prop:tilted_gen_density} and~\ref{prop:tilted_gen_current}
reveal two fundamental structural differences between the density and current
tilted generators.
For the density functional, i.e.~$\mathcal{L}_k^\rho=\mathcal{L}+kV$, the tilt appears as an
additive perturbation that adds the multiplication operator $kV$ to the Markov
generator while leaving its differential structure unchanged. 
The current tilt,
however, involves a shift in the gradient $\nabla\to\nabla+kU$ in both
the drift and diffusion parts, and therefore deforms the full second-order
operator, in particular by introducing the first-order term $2k U\cdot D\nabla$, rather than merely adding a potential.
Moreover, $\mathcal{L}_k^J$ carries a non-negative term
$k^2(U\cdot DU)$ that is absent in the tilted generator for density-type functionals. These differences have important consequences for the approach to proving the corresponding concentration inequalities.
\end{remark}

\subsubsection{Symmetrization of the tilted generator}
\label{sec:symmetrization_tilted}
The tilted generators $\mathcal{L}_k^\rho$ and $\mathcal{L}_k^J$ are generally 
not self-adjoint in $L^2(\mu)$. The density generator $\mathcal{L}_k^\rho=\mathcal{L}+kV$
is non-self-adjoint whenever $\mathcal{L}$ is not, i.e.~whenever the underlying Markov
process is non-reversible ($\js\neq 0$, see~\cite{pavliotis2014stochastic,jiang2004mathematical}). For current observables,
$\mathcal{L}_k^J$ is non-self-adjoint for every $k\neq 0$ even under detailed
balance ($\js=0$), owing to the first-order term $2kU\cdot D\nabla$ introduced by
the stochastic integral (see Remark~\ref{rem:structural_comparison}).
Although non-self-adjointness is a priori not an issue for the construction of the
Feynman-Kac semigroups, it does obstruct the spectral analysis by which we bound
their operator norm $\|P_t^{k,A}\|_{L^2(\mu)}$. This analysis involves the
quadratic form $\langle \mathcal{L}_k^A f, f\rangle_{L^2(\mu)}$
(Section~\ref{sec:norm_bound}), which, for the non-self-adjoint $\mathcal{L}_k^A$,
does not reflect the spectrum of $\mathcal{L}_k^A$. We therefore pass to the
symmetrization $\widetilde{\mathcal{L}}_k^A$, a self-adjoint operator with the same
quadratic form as $\mathcal{L}_k^A$, now governed by its eigenvalue spectrum. A
short lemma below makes this identity precise and justifies working with
$\widetilde{\mathcal{L}}_k^A$ in place of $\mathcal{L}_k^A$.

\begin{definition}[Symmetrized tilted generators]
\label{def:sym_tilted}
For $A \in \{\rho, J\}$,
let $(\mathcal{L}_k^A)^\dagger$ denote
the $L^2(\mu)$ adjoint of $\mathcal{L}_k^A$.
The \emph{symmetrized tilted generator} is
\begin{align}
  \widetilde{\mathcal{L}}_k^A
  \equiv
  \frac{1}{2}\bigl[\mathcal{L}_k^A + (\mathcal{L}_k^A)^\dagger\bigr],
  \label{eq:sym_tilted_def}
\end{align}
which is symmetric, and self-adjoint in $L^2(\mu)$ (on the common core, see Assumptions~\ref{ass:dynamics})
by construction.
\end{definition}

\begin{lemma}[Quadratic form identity]
\label{lem:quad_identity}
Let $k \in \mathbb{R}$ and let $f \in D(\mathcal{L}_k^A)\cap D((\mathcal{L}_k^A)^\dagger)$
be real-valued. Then
\begin{align}
  \bigl\langle \mathcal{L}_k^A f, f \bigr\rangle_{L^2(\mu)}
  =
  \bigl\langle \widetilde{\mathcal{L}}_k^A f, f \bigr\rangle_{L^2(\mu)},
  \label{eq:quad_identity}
\end{align}
with $\widetilde{\mathcal{L}}_k^A$ as in Definition~\ref{def:sym_tilted}, i.e.,
the quadratic form of $\mathcal{L}_k^A$ is determined entirely by its
self-adjoint part.
\end{lemma}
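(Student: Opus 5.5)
The identity is a purely algebraic consequence of the definition of the adjoint together with the symmetry of the real inner product $\langle\cdot,\cdot\rangle_\mu$. The plan is to expand the right-hand side using Definition~\ref{def:sym_tilted} and then move the adjoint across the inner product.

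First, take $f\in D(\mathcal{L}_k^A)\cap D((\mathcal{L}_k^A)^\dagger)$ real-valued. Then both $\mathcal{L}_k^A f$ and $(\mathcal{L}_k^A)^\dagger f$ are defined, and linearity of the inner product gives
\begin{align*}
\bigl\langle \widetilde{\mathcal{L}}_k^A f, f\bigr\rangle_\mu
=\tfrac12\bigl\langle \mathcal{L}_k^A f, f\bigr\rangle_\mu+\tfrac12\bigl\langle (\mathcal{L}_k^A)^\dagger f, f\bigr\rangle_\mu .
\end{align*}
Second, since $f\in D(\mathcal{L}_k^A)$, the defining property of the $L^2(\mu)$-adjoint yields $\langle (\mathcal{L}_k^A)^\dagger f, f\rangle_\mu=\langle f,\mathcal{L}_k^A f\rangle_\mu$. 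Third, $L^2(\mu)$ is a real Hilbert space and the coefficients $b$, $D$, $U$, $V$ and $k$ are all real, so $\mathcal{L}_k^A f$ is real-valued and the inner product is symmetric. Hence $\langle f,\mathcal{L}_k^A f\rangle_\mu=\langle \mathcal{L}_k^A f,f\rangle_\mu$, and the two halves combine to give \eqref{eq:quad_identity}.

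Equivalently, one can write $\mathcal{L}_k^A=\widetilde{\mathcal{L}}_k^A+\tfrac12[\mathcal{L}_k^A-(\mathcal{L}_k^A)^\dagger]$. The same adjoint argument shows that the skew part has vanishing quadratic form, $\langle \mathcal{L}_k^A f,f\rangle_\mu-\langle (\mathcal{L}_k^A)^\dagger f,f\rangle_\mu=0$. This makes explicit that the antisymmetric part, including the first-order term $2kU\cdot D\nabla$ in the current case, drops out.

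The only step needing care is domain bookkeeping. Both terms must be evaluated on the same $f$, which the hypothesis guarantees; on the common core $\mathcal{C}$ of Assumption~\ref{ass:dynamics} this holds automatically. The identity then extends to the form domain by closure when it is used later. No genuine obstacle is expected beyond stressing that real-valuedness is essential, since over $\mathbb{C}$ one would only obtain equality of real parts.
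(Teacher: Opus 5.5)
Your proof is correct and is essentially the paper's argument. The paper writes $\mathcal{L}_k^A=\widetilde{\mathcal{L}}_k^A+\widehat{\mathcal{L}}_k^A$ and observes that the antisymmetric part $\widehat{\mathcal{L}}_k^A$ has vanishing quadratic form on real-valued $f$, which is exactly your adjoint-plus-symmetry computation (your second formulation is that route), and your expansion additionally makes explicit where both hypotheses $f\in D(\mathcal{L}_k^A)$ and $f\in D((\mathcal{L}_k^A)^\dagger)$ enter.
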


\begin{proof}
Decompose $\mathcal{L}_k^A
= \widetilde{\mathcal{L}}_k^A
+ \widehat{\mathcal{L}}_k^A$,
where $\widehat{\mathcal{L}}_k^A
= \tfrac{1}{2}(\mathcal{L}_k^A
- (\mathcal{L}_k^A)^\dagger)$
is anti-symmetric in $L^2(\mu)$.
For real-valued $f$ in a real Hilbert space $L^2(\mu;\mathbb{R})$,
anti-symmetry gives
$\langle\widehat{\mathcal{L}}_k^A f,
f\rangle_{L^2(\mu)}= -\langle\widehat{\mathcal{L}}_k^A f,f\rangle_{L^2(\mu)}$,
hence $\langle\widehat{\mathcal{L}}_k^A f,f\rangle_{L^2(\mu)} = 0$,
and Eq.~\eqref{eq:quad_identity} follows.
\end{proof}

Lemma~\ref{lem:quad_identity} shows that,
although $\mathcal{L}_k^A$ is the natural 
generator when initially defining the Feynman-Kac semigroup $P_t^{k,A}=\e^{t\mathcal{L}_k^A}$,
the symmetrization preserves the tilted quadratic form and it is therefore natural 
to work directly with $\widetilde{\mathcal{L}}_k^A$ instead, which we now compute explicitly
for the setting of diffusion processes.\footnote{We remark that Lemma~\ref{lem:quad_identity} only gives the diagonal
identity $\bigl\langle \mathcal{L}_k^A f, f \bigr\rangle_{L^2(\mu)}= \bigl\langle \widetilde{\mathcal{L}}_k^A f, f \bigr\rangle_{L^2(\mu)}$, and the full bilinear form ($f\neq g$) is not preserved, since the antisymmetric part contributes 
to the off-diagonal.}
In particular we use the drift decomposition established earlier in Eq.~\eqref{eq:drift_split}.

\begin{proposition}[Symmetric-antisymmetric decomposition of tilted generators]
\label{prop:sym_tilted_explicit}
Let $\mathcal{L}_S = \ps^{-1}\nabla\cdot(\ps D\nabla)$ and $\mathcal{L}_A = \vs\cdot\nabla$
be the symmetric and antisymmetric parts of the Markov generator
from Eq.~\eqref{eq:markov_gen_decomposition}.
For $f \in
D(\mathcal{L})$, the tilted generator for \emph{density observables} has symmetric and
antisymmetric parts, 
\begin{align}
  \widetilde{\mathcal{L}}_k^\rho f= \mathcal{L}_S f + kV f,
  \qquad
  \widehat{\mathcal{L}}_k^\rho f= \mathcal{L}_A f,
  \label{eq:sym_tilted_density}
\end{align}
and the tilted generator for \emph{current observables} decomposes as
\begin{align}
  \widetilde{\mathcal{L}}_k^J f
  & = \mathcal{L}_S f + k(\vs\cdot U) f + k^2(U\cdot DU)f\nonumber\\
  \widehat{\mathcal{L}}_k^J f
  &= \mathcal{L}_A f + 2k(DU)\cdot\nabla f + k\ps^{-1}\nabla\cdot(\ps DU) f,  
  \label{eq:sym_tilted_current}
\end{align}
where $\widetilde{\mathcal{L}}_k^A = \tfrac12\bigl[\mathcal{L}_k^A + (\mathcal{L}_k^A)^\dagger\bigr]$
and $\widehat{\mathcal{L}}_k^A = \tfrac12\bigl[\mathcal{L}_k^A - (\mathcal{L}_k^A)^\dagger\bigr]$
denote the symmetric and antisymmetric parts in $L^2(\mu)$ for $A \in \{\rho, J\}$
from Definition~\ref{def:sym_tilted}.
\end{proposition}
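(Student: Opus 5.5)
The plan is to compute the formal $L^2(\mu)$-adjoint of each term of the tilted generators separately, by integration by parts against the weight $\ps$, and then read off the symmetric and antisymmetric parts. All manipulations are carried out for $f,g$ in the common core $\mathcal{C}$ of Assumption~\ref{ass:dynamics}, or first for smooth compactly supported functions, and extended by closure afterwards. The building blocks are three elementary adjoint rules in $L^2(\mu)$.

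\begin{enumerate}
\item Multiplication operators by real bounded functions are self-adjoint.
\item $\mathcal{L}^\dagger=\mathcal{L}_S-\mathcal{L}_A$, which is Eq.~\eqref{eq:markov_gen_decomposition}: $\mathcal{L}_S$ is symmetric and $\mathcal{L}_A=\vs\cdot\nabla$ is skew because $\nabla\cdot\js=0$.
\item For a $C^1$ vector field $a$ with suitable integrability, the first-order operator $a\cdot\nabla$ has adjoint
\begin{align*}
(a\cdot\nabla)^\dagger g=-a\cdot\nabla g-\ps^{-1}\nabla\cdot(\ps a)\,g,
\end{align*}
which follows from $\int (a\cdot\nabla f)\,g\,\ps\,\dd x=-\int f\,\nabla\cdot(\ps a g)\,\dd x$.
\end{enumerate}

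The density case is then immediate. Since $\mathcal{L}_k^\rho=\mathcal{L}_S+\mathcal{L}_A+kV$ and $kV$ is self-adjoint, halving $\mathcal{L}_k^\rho\pm(\mathcal{L}_k^\rho)^\dagger$ gives $\widetilde{\mathcal{L}}_k^\rho=\mathcal{L}_S+kV$ and $\widehat{\mathcal{L}}_k^\rho=\mathcal{L}_A$, which is Eq.~\eqref{eq:sym_tilted_density}.

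For the current case, start from the expanded form Eq.~\eqref{eq:tilted_gen_current_long}. Using the symmetry of $D$, write $2kU\cdot D\nabla f=2k(DU)\cdot\nabla f$. The tilted generator then splits into four pieces: $\mathcal{L}$, the multiplication operator $k(b\cdot U+\nabla\cdot DU)$, the first-order operator $2k(DU)\cdot\nabla$, and the multiplication operator $k^2 Q$. Applying rule (iii) with $a=2kDU$ gives
\begin{align*}
(\mathcal{L}_k^J)^\dagger=\mathcal{L}_S-\mathcal{L}_A+k(b\cdot U+\nabla\cdot DU)-2k(DU)\cdot\nabla-2k\ps^{-1}\nabla\cdot(\ps DU)+k^2Q .
\end{align*}
Taking half the difference of $\mathcal{L}_k^J$ and this adjoint yields the stated $\widehat{\mathcal{L}}_k^J$ directly. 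Half the sum yields
\begin{align*}
\mathcal{L}_S+k\bigl(b\cdot U+\nabla\cdot DU-\ps^{-1}\nabla\cdot(\ps DU)\bigr)+k^2Q .
\end{align*}
The decisive simplification is the product rule $\ps^{-1}\nabla\cdot(\ps DU)=\nabla\cdot(DU)+DU\cdot\nabla\log\ps$. With it the divergence terms cancel, and the potential becomes $k\,U\cdot(b-D\nabla\log\ps)=k\,\vs\cdot U$ by the drift decomposition Eq.~\eqref{eq:drift_split}. This is the same cancellation as in the proof of Proposition~\ref{prop:mean}.

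The algebra is routine. The step I expect to need care is justifying the integration by parts, i.e.\ the vanishing of boundary terms on $\mathbb{R}^d$ and the domain identifications. The intended argument is as follows. Since $U\in C^1_b$ and $D$ is constant, $2k(DU)\cdot\nabla$ is relatively bounded with respect to $\mathcal{L}$ on the core. The coefficient $\ps^{-1}\nabla\cdot(\ps DU)=\nabla\cdot DU+DU\cdot\nabla\log\ps$, however, may be unbounded through $\nabla\log\ps$. I would therefore first verify the identities on $C_c^\infty$, where no boundary terms arise, and then extend to the common core $\mathcal{C}$ by density and closedness. Since only the quadratic-form identity of Lemma~\ref{lem:quad_identity} is used later, it suffices to establish the statement in the weak sense, i.e.\ for $\langle\cdot f,g\rangle_\mu$ with $f,g$ in the core.
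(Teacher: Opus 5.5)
Your proposal is correct and follows essentially the same route as the paper: term-by-term $L^2(\mu)$-adjoints using $\mathcal{L}^\dagger=\mathcal{L}_S-\mathcal{L}_A$, self-adjointness of multiplication operators, the first-order adjoint formula with $a=2kDU$, and the product rule $\ps^{-1}\nabla\cdot(\ps DU)=\nabla\cdot(DU)+DU\cdot\nabla\log\ps$. Combined with the drift decomposition, that product rule turns the linear potential into $k\,\vs\cdot U$. Your extra care about boundary terms and domains goes slightly beyond the paper, which simply assumes boundary terms vanish; this includes checking the identities on $C_c^\infty$ first and noting that only the weak (quadratic-form) version is needed downstream.
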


\begin{proof}
The symmetric-antisymmetric decomposition of the tilted generators
follows from Definition~\ref{def:sym_tilted} by computing the respective
$L^2(\mu)$ adjoints. 
First, we note that by using Eq.~\eqref{eq:markov_gen_decomposition} the adjoint of the Markov generator
is given by $\mathcal{L}^\dagger = \mathcal{L}_S - \mathcal{L}_A$,
and that terms appearing as multiplication by a real function are self-adjoint. 

For the density observable we have $\mathcal{L}_k^\rho = \mathcal{L} + kV$ where $kV$ is self-adjoint.
The $L^2(\mu)$ adjoint is thus obtained as $(\mathcal{L}_k^\rho)^\dagger
= \mathcal{L}_S - \mathcal{L}_A + kV$. 
Consequently, the symmetrized density tilted generator reads $\widetilde{\mathcal{L}}_k^\rho = \mathcal{L}_S + kV$,
and the antisymmetric part is given by $\widehat{\mathcal{L}}_k^\rho = \mathcal{L}_A$ and 
completes the proof of Eq.~\eqref{eq:sym_tilted_density}.

For the current observable the tilted generator, applied to a test function $f$, reads
\begin{align}
  \mathcal{L}_k^Jf
  = \mathcal{L}f + k(b\cdot U + \nabla\cdot DU)f + 2k(DU)\cdot\nabla f + k^2(U\cdot DU)f,
\end{align}
where we write $2kU\cdot D\nabla = 2k(DU)\cdot\nabla$ by the symmetry of $D$.
Here, the second and last terms appear as multiplication operators which are self-adjoint.
To treat the remaining first-order term we use, for a smooth
vector field $Y$, that integration by parts against $\dd\mu = \ps\dd x$ (boundary terms
vanishing) gives, for a test function $f$,
\begin{align}
  (Y\cdot\nabla)^\dagger f
  = -Y\cdot\nabla f - \ps^{-1}\nabla\cdot(\ps Y)f.
  \label{eq:first_order_adjoint}
\end{align}
Therefore, with $Y = 2k DU$ the adjoint now reads
\begin{align}
  (\mathcal{L}_k^J)^\dagger f
   = & \mathcal{L}_S f - \mathcal{L}_A f
    + k(b\cdot U + \nabla\cdot DU) f
    - 2k(DU)\cdot\nabla f
    \nonumber\\ &- 2k\ps^{-1}\nabla\cdot(\ps DU) f
    + k^2(U\cdot DU) f.
\end{align}
The antisymmetric tilted generator of the current observable is therefore given by
\begin{align}
  \widehat{\mathcal{L}}_k^J  f
  = \mathcal{L}_A f + 2k(DU)\cdot\nabla f + k\ps^{-1}\nabla\cdot(\ps DU) f,
\end{align}
and its symmetric counterpart then reads
\begin{align}
  \widetilde{\mathcal{L}}_k^J f
  = \mathcal{L}_S f + k(b\cdot U + \nabla\cdot DU) f
  - k\ps^{-1}\nabla\cdot(\ps DU)f
    + k^2(U\cdot DU) f.
\end{align}
Lastly, we expand $\ps^{-1}\nabla\cdot(\ps DU) = \nabla\cdot DU + (D\nabla\log\ps)\cdot U$
such that the $\nabla\cdot DU$ terms cancel, and using the drift decomposition~\eqref{eq:drift_split} via
$b - D\nabla\log\ps = \vs$ 
allows us to write the term linear in $k$ as $k(\vs\cdot U)$
which gives Eq.~\eqref{eq:sym_tilted_current} and completes the proof.
\end{proof}

We now give a Dirichlet representation of tilted quadratic forms that will prove useful below.
 
\begin{corollary}[Dirichlet representation of the tilted quadratic forms]
\label{cor:dirichlet_tilted}
Let $f\in D(\mathcal{L})$ be real-valued and let
$\mathcal{E}(f,f)=-\langle\mathcal{L}_S f,f\rangle_\mu=\langle\nabla f,D\nabla f\rangle_\mu$
be the symmetrized Dirichlet form of Eq.~\eqref{eq:dirichlet_setup}. Then the
quadratic forms of the tilted generators admit the representations
\begin{align}
  \langle \mathcal{L}_k^\rho f, f\rangle_\mu
  &= -\mathcal{E}(f,f) + k\langle V, f^2\rangle_\mu,
  \label{eq:dirichlet_tilted_density}
  \\[2pt]
  \langle \mathcal{L}_k^J f, f\rangle_\mu
  &= -\mathcal{E}(f,f) + k\langle \vs\cdot U, f^2\rangle_\mu
     + k^2\langle U\cdot DU, f^2\rangle_\mu,
  \label{eq:dirichlet_tilted_current}
\end{align}
where $\langle g, f^2\rangle_\mu = \int gf^2\dd\mu$.
\end{corollary}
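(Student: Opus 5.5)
The plan is to combine Lemma~\ref{lem:quad_identity} with the explicit symmetrizations of Proposition~\ref{prop:sym_tilted_explicit}, and then to rewrite $-\langle\mathcal{L}_S f,f\rangle_\mu$ as the Dirichlet form. For real-valued $f\in D(\mathcal{L})$ (more precisely, $f$ in the common core $\mathcal{C}$ of Assumption~\ref{ass:dynamics}, with the general case following by closure), Lemma~\ref{lem:quad_identity} gives
\[
\langle\mathcal{L}_k^A f,f\rangle_\mu=\langle\widetilde{\mathcal{L}}_k^A f,f\rangle_\mu .
\]
This holds because the antisymmetric part $\widehat{\mathcal{L}}_k^A$ contributes nothing to the diagonal quadratic form.

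For the density case we insert $\widetilde{\mathcal{L}}_k^\rho=\mathcal{L}_S+kV$. Since $kV$ is a multiplication operator, $\langle kVf,f\rangle_\mu=k\int Vf^2\,\dd\mu=k\langle V,f^2\rangle_\mu$. By the definition of $\mathcal{E}$ in Eq.~\eqref{eq:dirichlet_setup}, $\langle\mathcal{L}_S f,f\rangle_\mu=-\mathcal{E}(f,f)$, and Eq.~\eqref{eq:dirichlet_tilted_density} follows.

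For the current case we insert $\widetilde{\mathcal{L}}_k^J=\mathcal{L}_S+k(\vs\cdot U)+k^2(U\cdot DU)$. The last two terms are again multiplication operators by bounded functions: $U$ is bounded and $D$ is constant, and $\vs\cdot U$ is assumed to be integrable against $f^2$ on the core. They therefore give $k\langle\vs\cdot U,f^2\rangle_\mu+k^2\langle U\cdot DU,f^2\rangle_\mu$, which yields Eq.~\eqref{eq:dirichlet_tilted_current}.

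The only genuine point to check is the identity $-\langle\mathcal{L}_S f,f\rangle_\mu=\langle\nabla f,D\nabla f\rangle_\mu$. This follows from $\mathcal{L}_S f=\ps^{-1}\nabla\cdot(\ps D\nabla f)$ by a single integration by parts against $\ps\,\dd x$:
\[
\int f\,\nabla\cdot(\ps D\nabla f)\,\dd x=-\int \nabla f\cdot D\nabla f\,\ps\,\dd x .
\]
The boundary terms vanish for functions in the core, by the same convention already used in Eq.~\eqref{eq:first_order_adjoint}.

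The main obstacle is the justification of the domain, not the algebra. One must ensure that $f\in D(\mathcal{L})$ also lies in $D((\mathcal{L}_k^J)^\dagger)$, so that Lemma~\ref{lem:quad_identity} applies. I would do this by working first on $\mathcal{C}$, where both $\mathcal{L}_k^J$ and its adjoint are given by the explicit differential expressions from the proof of Proposition~\ref{prop:sym_tilted_explicit}, because $U\in C^1_b$ and all the extra terms are bounded perturbations of lower order. Alternatively, one can bypass the adjoint altogether. In that route we compute $\langle\mathcal{L}_k^J f,f\rangle_\mu$ directly from Eq.~\eqref{eq:tilted_gen_current_long}. The skew part satisfies $\langle\vs\cdot\nabla f,f\rangle_\mu=\tfrac12\int\vs\cdot\nabla(f^2)\,\dd\mu=0$, since $\nabla\cdot\js=0$. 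For the first-order tilt term we write
\[
\langle 2k\,DU\cdot\nabla f,f\rangle_\mu=k\int DU\cdot\nabla(f^2)\,\ps\,\dd x=-k\int\ps^{-1}\nabla\cdot(\ps DU)\,f^2\,\dd\mu ,
\]
and this combines with $k(b\cdot U+\nabla\cdot DU)$ to give exactly $k\,\vs\cdot U$. This direct computation reproduces the same cancellation as in the proof of Proposition~\ref{prop:sym_tilted_explicit} and requires only $f\in D(\mathcal{L})$. I would present it as a consistency check.
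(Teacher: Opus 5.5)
Your proposal is correct and follows the paper's own proof. Combine Lemma~\ref{lem:quad_identity} with the explicit symmetrizations of Proposition~\ref{prop:sym_tilted_explicit}, then use $\langle\mathcal{L}_S f,f\rangle_\mu=-\mathcal{E}(f,f)$. Your additional direct computation from Eq.~\eqref{eq:tilted_gen_current_long} is also correct: the skew part vanishes because $\nabla\cdot\js=0$, and the first-order tilt term contributes $-k\ps^{-1}\nabla\cdot(\ps DU)$, which combines with $k(b\cdot U+\nabla\cdot DU)$ into $k\,\vs\cdot U$. That route is a useful addition, since it avoids needing $f\in D((\mathcal{L}_k^J)^\dagger)$, which Lemma~\ref{lem:quad_identity} requires but the paper's one-line argument (stated only for $f\in D(\mathcal{L})$) passes over.
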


\begin{proof}
By Lemma~\ref{lem:quad_identity}
$\langle\mathcal{L}_k^A f,f\rangle_\mu=\langle\widetilde{\mathcal{L}}_k^A f,f\rangle_\mu$.
Substituting the explicit symmetrized
generators~\eqref{eq:sym_tilted_density} and~\eqref{eq:sym_tilted_current}, using
$\langle\mathcal{L}_S f,f\rangle_\mu=-\mathcal{E}(f,f)$ gives the stated forms.
\end{proof}

\subsubsection{Self-adjointness of tilted generators}
\label{sec:selfadjoint_tilted}
After introducing the explicit symmetric-antisymmetric decomposition of the tilted generator $\mathcal{L}_k^A$
in Proposition~\ref{prop:sym_tilted_explicit}, we can now determine the conditions under which
the tilted generators become self-adjoint.
In particular, $\mathcal{L}_k^A$ is self-adjoint in $L^2(\mu)$ if and only if its
antisymmetric part vanishes, i.e.,  $\widehat{\mathcal{L}}_k^A = 0$.

\paragraph{Density observables}
The antisymmetric part $\widehat{\mathcal{L}}_k^\rho = \mathcal{L}_A = \vs\cdot\nabla$ is
independent of $k$, which means that $\mathcal{L}_k^\rho$ is self-adjoint for \emph{every} $k$ if and
only if $\vs = 0$, i.e., the dynamics is reversible. In the density case self-adjointness is thus inherited
from the underlying dynamics, and in the reversible case the symmetrization equals the
full tilted generator.

\paragraph{Current observables}
The antisymmetric part $\widehat{\mathcal{L}}_k^J f = (\vs + 2kDU)\cdot\nabla f +
k\ps^{-1}\nabla\cdot(\ps DU)f$ acting on a test function $f$
depends on $k$ and, by the argument used in Eq.~\eqref{eq:first_order_adjoint}, vanishes for all test functions
if and only if the vector field $\vs + 2kDU$ vanishes. For $k \neq 0$ this requires
\begin{align}
  \vs = -2kDU
  \quad\Longleftrightarrow\quad
  \js = -2k\ps DU
  \quad\Longleftrightarrow\quad
  U = -\frac{1}{2k}D^{-1}\vs.
  \label{eq:selfadjoint_condition}
\end{align}
For $k = 0$,  we find $\widehat{\mathcal{L}}_0^J = \vs \cdot \nabla = \mathcal{L}_A$
and $\mathcal{L}_0^J=\mathcal{L}$, i.e., self-adjointness is obtained if and only if
the reversibility condition $\vs = 0$ holds.
Notably, for reversible dynamics with $k \neq 0$,
Eq.~\eqref{eq:selfadjoint_condition} requires that $DU = 0$, hence $U = 0$ since the diffusion coefficient $D$ is positive
definite. 
Consequently, for any non-trivial $U \neq 0$ the current operator $\mathcal{L}_k^J$ is
\emph{not} self-adjoint even when the dynamics is reversible. 
In the irreversible case, the self-adjointness condition is
achieved by the special choice $U = -\tfrac{1}{2k}D^{-1}\vs$.\footnote{Interestingly, the condition for
self-adjointness of the tilted current generator is proportional to the choice
that gives the steady-state entropy production rate $U_\Sigma = D^{-1}\vs$, i.e., $U=-\tfrac{1}{2k}U_\Sigma$.} 
Substituting $\vs = -2kDU$ into Eq.~\eqref{eq:sym_tilted_current} gives
$\vs\cdot U = -2k(U\cdot DU)$ and the the symmetric operator therefore reads
$\widetilde{\mathcal{L}}_k^J = \mathcal{L}_S - k^2(U\cdot DU)$. For a
general $U$, however, $\mathcal{L}_k^J$ remains non self-adjoint.

\subsection{Bounding the Feynman-Kac norm}
\label{sec:norm_bound}
We now return to the task of bounding the operator norm $\|P_t^{k,A}\|_{L^2(\mu)}$
first identified in Eq.~\eqref{eq:mgf_cs}.
As established in Section~\ref{sec:fk_semigroup},
the Feynman-Kac semigroup is not a contraction on $L^2(\mu)$.
Our strategy to recover a controlled norm
bound is to identify the precise rate at
which this norm can grow, by exploiting
the dissipativity structure of the tilted
generator $\mathcal{L}_k^A$ and recovering a contraction semigroup in the sense of Lumer-Phillips.

\subsubsection{Bound on operator norm via restoring dissipativity}
We characterize a Hilbert space version of dissipativity for linear operators with the following definition (for further background see e.g., \cite{pazy1983semigroups}).
\begin{definition}[Dissipative operator]\label{def:diss}
Let $H$ be a real-valued Hilbert space with inner product $\langle\cdot,\cdot\rangle$.
The linear operator $T : D(T) \subset H \to H$ is called \emph{dissipative} if and only if,
for all $x\in D(T)$,
\begin{align}
\langle Tx,x\rangle_H \leq 0,
\label{eq:diss_recall}
\end{align}
\end{definition}

In our context we therefore seek the
smallest constant $C$ for which
$T\equiv\mathcal{L}_k^A-C$ becomes dissipative.
Indeed, it is straightforward to identify this quantity as the supremum of the
corresponding quadratic form of $\mathcal{L}_k^A$ which motivates the following definition.

\begin{definition}[Dissipativity constant $\Lambda^A(k)$]
\label{def:Lambda}
For $A \in \{\rho, J\}$ and $k \in \mathbb{R}$ define
\begin{align}
  \Lambda^A(k)
  \equiv
  \sup\left\{
    \bigl\langle 
    \mathcal{L}_k^A f, f \bigr\rangle_{\mu}
    \mathrel{\Big|} f \in D(\mathcal{L}_k^A),\|f\|_{L^2(\mu)} = 1
  \right\},
  \label{eq:Lambda_def_unsym}
\end{align}
where the supremum is over the real Hilbert space $L^2(\mu)$.
\end{definition}
Thus, by Definitions~\ref{def:diss} and~\ref{def:Lambda} we can now directly recover dissipativity
via the shifted operator $\mathcal{L}_k^A-\Lambda^A(k)$ by construction.
By the Lumer-Phillips theorem this operator therefore generates a contraction semigroup
and gives the following operator-norm bound.
\begin{proposition}[Feynman-Kac norm bound]
\label{prop:fk_norm_bound}
For $A \in \{\rho, J\}$, $k \in \mathbb{R}$,
and all $t \geq 0$,
\begin{align}
  \|P_t^{k,A}\|_{L^2(\mu)}
  \leq
  \e^{t\Lambda^A(k)}.
  \label{eq:fk_norm_bound}
\end{align}
\end{proposition}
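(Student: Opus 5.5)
The plan is to use the Lumer--Phillips theorem on the shifted generator $T\equiv\mathcal{L}_k^A-\Lambda^A(k)$. I first treat $\Lambda^A(k)<\infty$; if $\Lambda^A(k)=+\infty$ the bound is trivial. Finiteness holds in our setting. For densities, Corollary~\ref{cor:dirichlet_tilted} gives $\Lambda^\rho(k)\le |k|\,\|V\|_{L^\infty(\mu)}$. For currents, $U$ is bounded, so $\Lambda^J(k)\le |k|\,\|\vs\cdot U\|_{L^\infty(\mu)}+k^2\|U\cdot DU\|_{L^\infty(\mu)}$ whenever $\vs\cdot U$ is bounded. Otherwise one simply allows $\Lambda^J(k)=\infty$.

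\textbf{Step 1 (dissipativity).} By Definition~\ref{def:Lambda} and homogeneity of the quadratic form, every real $f\in D(\mathcal{L}_k^A)$ satisfies $\langle\mathcal{L}_k^A f,f\rangle_\mu\le\Lambda^A(k)\|f\|^2_{L^2(\mu)}$. Hence $\langle Tf,f\rangle_\mu\le 0$, so $T$ is dissipative in the sense of Definition~\ref{def:diss}.

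\textbf{Step 2 (generation).} By Propositions~\ref{prop:tilted_gen_density} and~\ref{prop:tilted_gen_current}, $P_t^{k,A}$ is a strongly continuous semigroup with generator $\mathcal{L}_k^A$. Therefore $\e^{-t\Lambda^A(k)}P_t^{k,A}$ is a $C_0$-semigroup with generator $T$. The Hille--Yosida theory gives that $\lambda-T$ is surjective for all sufficiently large $\lambda$. Since $T$ is dissipative, the Lumer--Phillips theorem then applies and $T$ generates a contraction semigroup. By uniqueness of the semigroup generated by a given generator, this semigroup equals $\e^{-t\Lambda^A(k)}P_t^{k,A}$. Hence $\|P_t^{k,A}\|\le\e^{t\Lambda^A(k)}$.

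Alternatively, I can argue directly. For $f\in D(\mathcal{L}_k^A)$, set $u_t=P_t^{k,A}f\in D(\mathcal{L}_k^A)$. The backward equation~\eqref{eq:backward_fk} gives $\frac{d}{dt}\|u_t\|^2=2\langle\mathcal{L}_k^A u_t,u_t\rangle_\mu\le 2\Lambda^A(k)\|u_t\|^2$. Gronwall's inequality then yields $\|u_t\|\le\e^{t\Lambda^A(k)}\|f\|$. Since the domain is dense, the bound extends to all of $L^2(\mu)$. I would present this derivative computation as the core, because it avoids the range condition entirely.

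\textbf{Main obstacle.} The delicate point is functional-analytic. One must show that $\langle\mathcal{L}_k^A f,f\rangle_\mu$ evaluated on the full generator domain is still bounded by the supremum in Definition~\ref{def:Lambda}. That supremum is taken over $D(\mathcal{L}_k^A)$, so it is consistent by definition, but the concern is whether this agrees with a supremum over the core $\mathcal{C}$. I would handle this via Assumption~\ref{ass:dynamics}(2): approximate in graph norm by elements of $\mathcal{C}$, which is legitimate since the first-order and multiplicative perturbations are relatively bounded. A second point is justifying differentiability of $t\mapsto\|u_t\|^2$; this is standard for $C_0$-semigroups on the generator domain. Finally, the Gronwall argument uses real-valuedness, which holds because the semigroups map real functions to real functions.
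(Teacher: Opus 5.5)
Your proof is correct and follows the paper's argument: $\mathcal{L}_k^A-\Lambda^A(k)$ is dissipative directly by Definition~\ref{def:Lambda}, and then Lumer--Phillips is applied and the shift removed. Two of your points go beyond the paper's one-line appeal to Lumer--Phillips. Your observation that the range condition is automatic, because $\mathcal{L}_k^A$ already generates a $C_0$-semigroup (equivalently, your Gronwall energy estimate), makes explicit a step the paper leaves implicit. The core-approximation concern you raise is not needed for this statement, since the supremum defining $\Lambda^A(k)$ is already taken over all of $D(\mathcal{L}_k^A)$.
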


\begin{proof}
By Definition~\ref{def:Lambda},
$\langle(\mathcal{L}_k^A-\Lambda^A(k))f,f\rangle_{L^2(\mu)}\le0$ for every
$f\in D(\mathcal{L}_k^A)$, i.e., the operator $\mathcal{L}_k^A-\Lambda^A(k)$ is dissipative on
$L^2(\mu)$.
As a dissipative generator, it now generates a contraction semigroup by the
Lumer-Phillips theorem~\cite{pazy1983semigroups,engel2006short}, such that 
$\|\e^{t(\mathcal{L}_k^A-\Lambda^A(k))}\|_{L^2(\mu)}\le1$. 
Lastly, removing the shift gives the operator-norm bound
\begin{align}
  \|P_t^{k,A}\|_{L^2(\mu)}
  =\e^{t\Lambda^A(k)}\bigl\|\e^{t(\mathcal{L}_k^A-\Lambda^A(k))}\bigr\|_{L^2(\mu)}
  \le\e^{t\Lambda^A(k)},
\end{align}
and completes the proof.
\end{proof}

\begin{corollary}[Bound on the moment generating function]
\label{cor:mgf_bound}
For $A\in\{\rho,J\}$, $k\in\mathbb{R}$, $t\ge0$, and any initial law $\nu\ll\mu$,
\begin{align}
  \E^\nu\bigl[\e^{kA_t}\bigr]
  \leq\left\|\frac{\dd\nu}{\dd\mu}\right\|_{L^2(\mu)}
  \e^{t\Lambda^A(k)}.
  \label{eq:mgf_final_bound}
\end{align}
\end{corollary}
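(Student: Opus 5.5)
The bound follows by chaining two results already established: the Cauchy–Schwarz representation of the moment generating function, Eq.~\eqref{eq:mgf_cs}, and the operator-norm bound of Proposition~\ref{prop:fk_norm_bound}.

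First, by Eq.~\eqref{eq:mgf_fk} the moment generating function is an inner product,
\begin{align*}
  \E^\nu\bigl[\e^{kA_t}\bigr] = \Bigl\langle \tfrac{\dd\nu}{\dd\mu}, P_t^{k,A}\mathbf 1\Bigr\rangle_{\mu}.
\end{align*}
Here $P_t^{k,A}\mathbf 1 \in L^2(\mu)$ because $\mathbf 1$ lies in the core (Assumption~\ref{ass:dynamics}) and $P_t^{k,A}$ is a bounded operator on $L^2(\mu)$. Applying Cauchy–Schwarz and $\|\mathbf 1\|_{L^2(\mu)}=1$ gives
\begin{align*}
  \E^\nu\bigl[\e^{kA_t}\bigr] \le \Bigl\|\tfrac{\dd\nu}{\dd\mu}\Bigr\|_{L^2(\mu)}\,\|P_t^{k,A}\|_{L^2(\mu)},
\end{align*}
which is Eq.~\eqref{eq:mgf_cs}. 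Inserting $\|P_t^{k,A}\|_{L^2(\mu)}\le \e^{t\Lambda^A(k)}$ from Proposition~\ref{prop:fk_norm_bound} yields Eq.~\eqref{eq:mgf_final_bound} for every $k\in\mathbb{R}$ and $t\ge0$.

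Two technical points need checking.

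\emph{Finiteness of $\Lambda^A(k)$.} If $\Lambda^A(k)=+\infty$ the statement holds trivially, so nothing is lost in that case. In fact it is finite: by Corollary~\ref{cor:dirichlet_tilted},
\begin{align*}
  \Lambda^\rho(k)\le |k|\,\|V\|_{L^\infty(\mu)}, \qquad
  \Lambda^J(k)\le |k|\,\|\vs\cdot U\|_{L^\infty(\mu)} + k^2\|U\cdot DU\|_{L^\infty(\mu)},
\end{align*}
since $\mathcal{E}\ge0$. The first term in the current bound requires $\vs\cdot U$ bounded. If that fails, the claim remains true in the trivial sense.

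\emph{Validity of the Lumer–Phillips step.} The main obstacle is already absorbed into Proposition~\ref{prop:fk_norm_bound}, which we assume. That proposition needs the generator of $P_t^{k,A}$ to coincide with the operator whose quadratic form defines $\Lambda^A(k)$, on a domain where dissipativity is tested. The common core from Assumption~\ref{ass:dynamics} handles this. Given that, the present corollary is purely a combination of the two inequalities.
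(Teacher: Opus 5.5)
Your proposal is correct and matches the paper's proof: the paper also gets the corollary in one line by chaining the Cauchy--Schwarz bound Eq.~\eqref{eq:mgf_cs} with the operator-norm bound of Proposition~\ref{prop:fk_norm_bound}. Your side remarks are harmless: the case $\Lambda^A(k)=+\infty$ is indeed trivial, and $P_t^{k,A}\mathbf 1\in L^2(\mu)$ needs only that $\mathbf 1\in L^2(\mu)$ and $P_t^{k,A}$ is bounded, not that $\mathbf 1$ lies in the core.
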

\begin{proof}
Combine the Cauchy-Schwarz bound~\eqref{eq:mgf_cs} with the norm
bound~\eqref{eq:fk_norm_bound}.
\end{proof}

\subsubsection{Spectral interpretation}
To obtain concentration inequalities for additive functionals, it remains to evaluate 
$\Lambda^A(k)$ which---as a quadratic form of a non-self-adjoint titled generator---has 
no direct relation to the (in general complex) spectrum of $\mathcal{L}_k^A$.
However, the symmetrization of Section~\ref{sec:symmetrization_tilted} resolves this, since by the quadratic-form
identity, it equals the supremum over the self-adjoint $\widetilde{\mathcal{L}}_k^A$,
which admits the following spectral interpretation.

\begin{corollary}[Spectral interpretation of $\Lambda^A(k)$]
\label{cor:Lambda_spectral}
By Lemma~\ref{lem:quad_identity},
\begin{align}
  \Lambda^A(k)
  \equiv
  \sup\left\{
    \bigl\langle\widetilde{\mathcal{L}}_k^A f, f\bigr\rangle_{L^2(\mu)}
    |
    f \in D(\mathcal{L}_k^A), \|f\|_{L^2(\mu)} = 1
  \right\}.
  \label{eq:Lambda_via_sym}
\end{align}
Since $\widetilde{\mathcal{L}}_k^A$
is self-adjoint in $L^2(\mu)$ by construction
(see Def.~\ref{def:sym_tilted}),
the Rayleigh-Ritz theorem guarantees 
that the supremum of the quadratic form is precisely the principal (maximal) eigenvalue
of $\widetilde{\mathcal{L}}_k^A$, i.e., 
\begin{align}
  \Lambda^A(k)
  =
  \lambda_{\max}\bigl(\widetilde{\mathcal{L}}_k^A\bigr).
  \label{eq:Lambda_is_eigenvalue}
\end{align}
\end{corollary}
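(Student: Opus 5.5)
The claim combines two things: the reformulation \eqref{eq:Lambda_via_sym}, which replaces $\mathcal{L}_k^A$ by $\widetilde{\mathcal{L}}_k^A$ inside the supremum, and the identification of that supremum with the top of the spectrum of the self-adjoint operator $\widetilde{\mathcal{L}}_k^A$. I will prove them in that order.

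\textbf{Step 1: the reformulation.} Lemma~\ref{lem:quad_identity} gives $\langle\mathcal{L}_k^A f,f\rangle_\mu=\langle\widetilde{\mathcal{L}}_k^A f,f\rangle_\mu$ for every real $f$ in $D(\mathcal{L}_k^A)\cap D((\mathcal{L}_k^A)^\dagger)$. By Assumption~\ref{ass:dynamics} there is a common core $\mathcal{C}$, and on it Proposition~\ref{prop:sym_tilted_explicit} expresses both $\widetilde{\mathcal{L}}_k^A$ and $\widehat{\mathcal{L}}_k^A$ through bounded coefficients. I will therefore take the supremum in Definition~\ref{def:Lambda} over $\mathcal{C}$ and show this loses nothing. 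For that I need the quadratic form to be continuous in the graph norm. This holds because $\mathcal{L}_k^A$ differs from $\mathcal{L}$ by first-order terms with bounded coefficients ($V$, $U$, $DU$ and $\nabla\cdot DU$ are bounded). Hence the supremum over $D(\mathcal{L}_k^A)$ equals the supremum over $\mathcal{C}$, and on $\mathcal{C}$ the lemma applies, which gives \eqref{eq:Lambda_via_sym}.

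\textbf{Step 2: the spectral identification.} Let $\widetilde{\mathcal{L}}_k^A$ also denote the self-adjoint operator attached to the closed form
\[
q_k(f)=-\mathcal{E}(f,f)+k\langle W,f^2\rangle_\mu+k^2\langle Q,f^2\rangle_\mu .
\]
For the density case the form is $-\mathcal{E}(f,f)+k\langle V,f^2\rangle_\mu$ instead. In both cases the form is bounded above: $q_k(f)\le |k|\,\|W\|_{L^\infty(\mu)}+k^2\|Q\|_{L^\infty(\mu)}$ for $\|f\|_{L^2(\mu)}=1$. Here $W=\vs\cdot U$ enters only through $\langle W,f^2\rangle_\mu$, and I assume it is bounded; otherwise I need a form-boundedness argument relative to $\mathcal{E}$. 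Since the multiplication part is a bounded perturbation of $\mathcal{L}_S$, the KLMN theorem gives closedness and self-adjointness of the sum on $D(\mathcal{L}_S)$.

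Next I apply the spectral theorem to the semibounded self-adjoint operator $\widetilde{\mathcal{L}}_k^A$ with spectral measure $E$. For every $f$ in the form domain,
\[
\langle\widetilde{\mathcal{L}}_k^A f,f\rangle_\mu=\int\lambda\,\dd\langle E_\lambda f,f\rangle_\mu\le\sup\sigma\bigl(\widetilde{\mathcal{L}}_k^A\bigr)\,\|f\|_{L^2(\mu)}^2 .
\]
For the reverse inequality, take any $\epsilon>0$ and a unit vector in the range of $E\bigl((\sup\sigma-\epsilon,\sup\sigma]\bigr)$. Its Rayleigh quotient exceeds $\sup\sigma-\epsilon$. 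This is the Rayleigh--Ritz/min-max principle. Approximating that vector in form norm by core elements then shows the supremum over $\mathcal{C}$ equals $\sup\sigma(\widetilde{\mathcal{L}}_k^A)$.

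\textbf{Step 3: calling it an eigenvalue.} $\sup\sigma(\widetilde{\mathcal{L}}_k^A)$ is genuinely an eigenvalue only when it is isolated, for instance when it lies above the essential spectrum. In general it is just the top of the spectrum, so I will read $\lambda_{\max}$ in \eqref{eq:Lambda_is_eigenvalue} as $\sup\sigma$. For $k=0$ the supremum equals $0$, attained by the constant function $\mathbf 1$, as a sanity check.

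The main obstacle is the domain bookkeeping in Steps 1 and 2. I must check that $\mathcal{C}$ is a form core for $q_k$, so that the sup over $D(\mathcal{L}_k^A)$, the sup over $\mathcal{C}$, and the spectral supremum all coincide. My first attempt is to use that $\mathcal{C}$ is a core for $\mathcal{L}_S$ and that bounded multiplication operators do not change form cores.
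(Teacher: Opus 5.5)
Your argument follows the paper's: Lemma~\ref{lem:quad_identity} moves the supremum onto $\widetilde{\mathcal{L}}_k^A$, and the variational (Rayleigh--Ritz) characterization of the top of the spectrum of a self-adjoint operator does the rest. You add the domain bookkeeping that the paper omits, and two of those additions are genuine improvements. First, realizing $\widetilde{\mathcal{L}}_k^A$ as the operator of the closed form $q_k$ is what actually makes it self-adjoint; the paper's $\tfrac12[\mathcal{L}_k^A+(\mathcal{L}_k^A)^\dagger]$ is a priori only symmetric on $D(\mathcal{L}_k^A)\cap D((\mathcal{L}_k^A)^\dagger)$. Second, your Step~3 is right that in general one only gets $\Lambda^A(k)=\sup\sigma(\widetilde{\mathcal{L}}_k^A)$. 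That suffices for every later use of the variational formula. An actual eigenfunction $\psi_k$ is needed only in Proposition~\ref{prop:tightness}, where the paper assumes one exists.

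One justification in Step~1 is false for $A=J$. (For $A=\rho$ it is fine, since $kV$ is bounded.) The difference $\mathcal{L}_k^J-\mathcal{L}$ contains the multiplication operator $k\,b\cdot U$, and $b$ is only Lipschitz. For an Ornstein--Uhlenbeck process $b(x)=-Ax$, so $b\cdot U$ is unbounded for generic bounded $U$. Likewise $\widehat{\mathcal{L}}_k^J$ contains $\vs\cdot\nabla$ and $k\,U\cdot D\nabla\log\ps$, whose coefficients are linear in $x$ there. So ``bounded coefficients'' does not show that $\mathcal{C}$ is a core for $\mathcal{L}_k^J$.

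The first-order term $2kDU\cdot\nabla$ is harmless. Since $\|\nabla f\|_{L^2(\mu)}^2\lesssim\mathcal{E}(f,f)\le\|\mathcal{L}f\|_{L^2(\mu)}\|f\|_{L^2(\mu)}$, it is $\mathcal{L}$-bounded with relative bound $0$. The term $b\cdot U$, however, needs either an extra hypothesis or a different route.

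The cleaner route avoids graph-norm cores and $\widehat{\mathcal{L}}_k^J$ altogether. Integrate by parts directly in $\langle\mathcal{L}_k^Jf,f\rangle_\mu$ for $f\in D(\mathcal{L})$, as in Corollary~\ref{cor:dirichlet_tilted}. Write $b\cdot U=U\cdot D\nabla\log\ps+W$; its unbounded piece $U\cdot D\nabla\log\ps$ cancels against the one produced by $2k\langle DU\cdot\nabla f,f\rangle_\mu$. What remains is exactly $q_k(f)$, and you then need only a \emph{form} core, not a graph-norm core.

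Finally, your caveat on $W$ is not hypothetical. For a non-reversible OU process $\vs$ is linear, so $W=\vs\cdot U$ is generically unbounded. The form-boundedness argument relative to $\mathcal{E}$ is therefore genuinely required; in that case it follows from $\int|x|^2f^2\,\dd\mu\lesssim\mathcal{E}(f,f)+\|f\|_{L^2(\mu)}^2$.
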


Specifically, the problem of bounding the tail probabilities of $A_t$ is thereby reduced 
to a spectral problem that involves computing---or further bounding 
(see Section~\ref{sec:conc_poincare})---the principal 
eigenvalue $\Lambda^A(k)$ of the symmetrized tilted generator. 
Moreover, the prefactor $\|\dd\nu/\dd\mu\|_{L^2(\mu)}$
encodes the dependence on the initial measure $\nu\ll\mu$.
With this in place we can now derive our first general concentration inequalities.

\begin{proposition}[Variational formula for $\Lambda^A(k)$]
\label{prop:Lambda_variational}
With the Dirichlet form $\mathcal{E}(f,f)=-\langle\mathcal{L}_S f,f\rangle_\mu$,
\begin{align}
  &\Lambda^\rho(k)
  =
  \sup\left\{k\langle V,f^2\rangle_\mu-\mathcal{E}(f,f)
    \mathrel{\Big|} f \in D(\mathcal{L}_k^\rho),\|f\|_{L^2(\mu)} = 1
  \right\},
  \label{eq:Lambda_rho_dirichlet}
  \\
  &\Lambda^J(k)
  =
  \sup\left\{k\langle \vs\cdot U,f^2\rangle_\mu
        + k^2\langle U\cdot DU,f^2\rangle_\mu-\mathcal{E}(f,f)
    \mathrel{\Big|} f \in D(\mathcal{L}_k^J),\|f\|_{L^2(\mu)} = 1
  \right\}.
  \label{eq:Lambda_J_dirichlet}
\end{align}
\end{proposition}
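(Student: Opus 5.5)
The plan is to start from Definition~\ref{def:Lambda} and simply substitute the quadratic-form representations already established. By Definition~\ref{def:Lambda}, $\Lambda^A(k)$ is the supremum of $\langle\mathcal{L}_k^A f,f\rangle_\mu$ over real-valued $f\in D(\mathcal{L}_k^A)$ with $\|f\|_{L^2(\mu)}=1$. For each admissible $f$, Corollary~\ref{cor:dirichlet_tilted} gives
\begin{align*}
\langle\mathcal{L}_k^\rho f,f\rangle_\mu &= k\langle V,f^2\rangle_\mu-\mathcal{E}(f,f),\\
\langle\mathcal{L}_k^J f,f\rangle_\mu &= k\langle\vs\cdot U,f^2\rangle_\mu+k^2\langle U\cdot DU,f^2\rangle_\mu-\mathcal{E}(f,f).
\end{align*}
Taking the supremum of both sides over the same admissible set then yields \eqref{eq:Lambda_rho_dirichlet} and \eqref{eq:Lambda_J_dirichlet} directly, because the equality holds pointwise in $f$.

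The one point needing care is the domain. Corollary~\ref{cor:dirichlet_tilted} is stated for $f\in D(\mathcal{L})$ and relies on Lemma~\ref{lem:quad_identity}, which also requires $f\in D((\mathcal{L}_k^A)^\dagger)$. I would first observe that $D(\mathcal{L}_k^\rho)=D(\mathcal{L})$ (Proposition~\ref{prop:tilted_gen_density}). In the current case, $\mathcal{L}_k^J-\mathcal{L}$ consists of bounded multiplication operators plus the first-order term $2k(DU)\cdot\nabla$. Since $U\in C^1_b$, this term is relatively bounded with respect to $\mathcal{L}$ on the common core $\mathcal{C}$ of Assumption~\ref{ass:dynamics}, and the same holds for the adjoints by \eqref{eq:first_order_adjoint}.

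It therefore suffices to prove the identity with the supremum taken over $f\in\mathcal{C}$, $\|f\|_{L^2(\mu)}=1$, where every integration by parts used in Proposition~\ref{prop:sym_tilted_explicit} is legitimate. I would then extend to all of $D(\mathcal{L}_k^A)$ by density in graph norm. Both sides are continuous in the graph norm: $f\mapsto\langle\mathcal{L}_k^A f,f\rangle_\mu$ is continuous since the graph norm controls both $\mathcal{L}_k^Af$ and $f$. On the right-hand side, $\mathcal{E}(f,f)=-\langle\mathcal{L}f,f\rangle_\mu$ is controlled in the same way, and the potential terms are bounded because $V$, $U$ and $Q=U\cdot DU$ are bounded.

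For the term $\langle\vs\cdot U,f^2\rangle_\mu$, where $\vs$ may be unbounded, I would use its expression as a combination of $\langle\mathcal{L}_k^Jf,f\rangle_\mu$, $\mathcal{E}$ and bounded terms, so it inherits the same continuity. Normalization is handled by rescaling $f/\|f\|$. The density argument is the only real step; the rest is substitution.
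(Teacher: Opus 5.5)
Your substitution argument (inserting the Dirichlet representation of Corollary~\ref{cor:dirichlet_tilted} pointwise into Definition~\ref{def:Lambda} and taking suprema) is exactly the paper's proof. The paper simply combines Corollary~\ref{cor:Lambda_spectral} with Corollary~\ref{cor:dirichlet_tilted} and does not discuss domains at all; it tacitly works on the common core $\mathcal{C}$. The gap is in the density argument you add and call the only real step. That argument rests on the claim that $\mathcal{L}_k^J-\mathcal{L}$ consists of bounded multiplication operators plus $2k(DU)\cdot\nabla$, and this is false under the standing assumptions. By \eqref{eq:tilted_gen_current_long} the difference contains the potential $k\,b\cdot U=k(D\nabla\log\ps)\cdot U+k\,\vs\cdot U$, and $b$ is only assumed Lipschitz. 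Already for the reversible Ornstein--Uhlenbeck process ($b(x)=-x$, $D=I$, $\vs=0$) with constant $U$, this is multiplication by the unbounded function $-k\,x\cdot U$. The same unbounded term $(D\nabla\log\ps)\cdot U$ reappears in $(\mathcal{L}_k^J)^\dagger$ via \eqref{eq:first_order_adjoint}. Hence none of the following follows from what you wrote: relative boundedness of $\mathcal{L}_k^J-\mathcal{L}$ with respect to $\mathcal{L}$, equivalence of the two graph norms, or $\mathcal{C}$ being a core for $\mathcal{L}_k^J$. Your continuity of $\mathcal{E}(f,f)=-\langle\mathcal{L}f,f\rangle_\mu$ along $\mathcal{L}_k^J$-graph-norm approximations needs precisely these facts.

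Only the potential causes trouble. The first-order term is harmless, since $\|(DU)\cdot\nabla f\|_{L^2(\mu)}^2\le\|U\cdot DU\|_{L^\infty(\mu)}\,\mathcal{E}(f,f)\le\|U\cdot DU\|_{L^\infty(\mu)}\|\mathcal{L}f\|_{L^2(\mu)}\|f\|_{L^2(\mu)}$ gives it $\mathcal{L}$-bound zero. For the potential you would need a genuine estimate, e.g.\ a weighted bound of $\int|(D\nabla\log\ps)\cdot U|^2f^2\,\dd\mu$ by $\mathcal{E}(f,f)+\|f\|_{L^2(\mu)}^2$, which is available when $\nabla\cdot(D\nabla\log\ps)$ is bounded below. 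For $\vs\cdot U$ you would need an analogous estimate, and the standing assumptions do not provide one.

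Your treatment of $\langle\vs\cdot U,f^2\rangle_\mu$ is also circular. Expressing it through the identity on $\mathcal{C}$ shows at best that its restriction to $\mathcal{C}$ has a continuous extension. It does not show that this extension equals $\int\vs\cdot U\,f^2\,\dd\mu$ for $f\in D(\mathcal{L}_k^J)$, where that integral need not even converge absolutely.

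A consistent fix is to read the right-hand side as a supremum over $\mathcal{C}$, where all terms are finite and the integrations by parts of Proposition~\ref{prop:sym_tilted_explicit} are valid. You then prove separately that $\mathcal{C}$ is a core for $\mathcal{L}_k^J$, after which graph-norm continuity of $f\mapsto\langle\mathcal{L}_k^Jf,f\rangle_\mu$ lets you pass from $\mathcal{C}$ to $D(\mathcal{L}_k^J)$ on the left. Alternatively, assume $b\cdot U$ and $\vs\cdot U$ bounded, in which case your perturbation argument goes through verbatim. For the density observable, where $kV$ is bounded, your argument is fine as it stands.
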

\begin{proof}
Combine the spectral form (Corollary~\ref{cor:Lambda_spectral}) with the Dirichlet
representation of the quadratic forms (Corollary~\ref{cor:dirichlet_tilted}).
\end{proof}

Using the variational form it is easy to verify that $\Lambda^A(k)$ 
with $A\in\{\rho, J \}$, is a convex function in $k$ since the $k^2$ term is non-negative.
Moreover, at $k=0$ it reduces to the supremum over $-\mathcal{E}(f,f)\leq 0$
which is the principal eigenvalue of the symmetric part
$\mathcal{L}_S$ of the dynamics, i.e., $\Lambda^A(0)=0$
and $f=\mathbf{1}$ the corresponding eigenfunction.
Consequently, taking the derivative with respect to $k$ at $k=0$ with $f=\mathbf{1}$, therefore
gives $\partial_k\Lambda^\rho(0)=\E^\mu[V]=\E^\mu[\overline{\rho}_t]$
and $\partial_k\Lambda^J(0)=\E^\mu[\vs\cdot U] = \E^\mu[\overline{J}_t]$, respectively, where we used Proposition~\ref{prop:mean}.

\subsubsection{Tightness for the principal eigenvalue}
\label{sec:manfred_eigenvalue_bound}
The bound on the Feynman-Kac semigroup norm in Section~\ref{sec:norm_bound} controls the
moment generating function for all times $t>0$ via 
$\Lambda^A(k)$, i.e., the principal eigenvalue of the
symmetrized tilted generator (Corollary~\ref{cor:Lambda_spectral}). The actual 
growth rate it bounds, however, is the principal eigenvalue 
$\lambda_{\max}^A(k) \equiv 
\lambda_{\max}(\mathcal{L}_k^A)$ of the
\emph{full} tilted generator, which corresponds to the 
true scaled cumulant generating function that governs the regime $t\to\infty$
(see e.g., \cite{touchette2018introduction,chetrite2015nonequilibrium} for further details).

While the symmetrized operator $\widetilde{\mathcal{L}}_k^A$ is self-adjoint such that
$\Lambda^A(k)$ corresponds to its maximal eigenvalue characterized by a variational principle,
the generator $\mathcal{L}_k^A$ is not self-adjoint in general
(see Section~\ref{sec:selfadjoint_tilted}) and its (full) spectral problem in general involves
distinct left and right eigenfunctions. 
Next, we show that $\lambda_{\max}^A(k) \leq\Lambda^A(k)$ and discuss conditions for which equality holds.

\begin{proposition}[Bound on principal eigenvalues]
\label{prop:tightness}
Consider $A\in\{\rho,J\}$ and $k\in\mathbb{R}$.
Let $\psi_k>0$ and $r_k>0$ denote the eigenfunctions corresponding to the principal eigenvalues $\Lambda^A(k)$ (Definition~\ref{def:Lambda})
and $\lambda_{\max}^A(k)$ of the symmetrized 
(Proposition~\ref{prop:sym_tilted_explicit})
and full tilted generator (Propositions~\ref{prop:tilted_gen_density} and~\ref{prop:tilted_gen_current})
respectively, i.e., 
$\widetilde{\mathcal{L}}_k^A\psi_k=\Lambda^A(k)\psi_k$
and $\mathcal{L}_k^A r_k = \lambda_{\max}^A(k) r_k$.
Then, it holds that 
\begin{align}
  \lambda_{\max}^A(k) \leq\Lambda^A(k),
\end{align}
with equality if and only if $\widehat{\mathcal{L}}_k^A\psi_k=0$, i.e.,
the principal eigenfunction of the symmetrized tilted generator is annihilated by the antisymmetric part.
Equivalently, equality is obtained if and only if $\psi_k$ 
is the eigenfunction corresponding to $\lambda_{\max}^A(k)$.
\end{proposition}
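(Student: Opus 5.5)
The plan is to test the eigenvalue equation of the full tilted generator against its own eigenfunction. Take the principal right eigenfunction $r_k$ of $\mathcal{L}_k^A$ with $\mathcal{L}_k^A r_k=\lambda_{\max}^A(k) r_k$; it is real and strictly positive, and we normalize it so that $\|r_k\|_{L^2(\mu)}=1$, assuming it lies in $L^2(\mu)$ and in the domain. Pairing the eigenvalue equation with $r_k$ in $L^2(\mu)$ gives $\lambda_{\max}^A(k)=\langle\mathcal{L}_k^A r_k,r_k\rangle_\mu$. Lemma~\ref{lem:quad_identity} turns this into $\langle\widetilde{\mathcal{L}}_k^A r_k,r_k\rangle_\mu$, because the antisymmetric part $\widehat{\mathcal{L}}_k^A$ has vanishing diagonal form on real functions. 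Definition~\ref{def:Lambda} and Corollary~\ref{cor:Lambda_spectral} then bound this Rayleigh quotient by $\Lambda^A(k)$, which proves the inequality.

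For the equality characterization I would use the Rayleigh-Ritz equality case. Since $\psi_k>0$ is the principal eigenfunction of the self-adjoint $\widetilde{\mathcal{L}}_k^A$, it spans the top eigenspace, i.e.\ the principal eigenvalue is simple. Simplicity follows from a Perron-Frobenius or ground-state argument: $\widetilde{\mathcal{L}}_k^A$ is a Schr\"odinger-type operator $\mathcal{L}_S+\text{potential}$ by Proposition~\ref{prop:sym_tilted_explicit}, so its semigroup is positivity improving. Now suppose $\lambda_{\max}^A(k)=\Lambda^A(k)$. Then $r_k$ attains the supremum of the Rayleigh quotient, so $r_k$ is a maximizer, hence an eigenfunction of $\widetilde{\mathcal{L}}_k^A$ with eigenvalue $\Lambda^A(k)$, and by simplicity $r_k=\psi_k$. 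With that identification,
\begin{align*}
\widehat{\mathcal{L}}_k^A\psi_k=\mathcal{L}_k^A\psi_k-\widetilde{\mathcal{L}}_k^A\psi_k=\lambda_{\max}^A(k)\psi_k-\Lambda^A(k)\psi_k=0 .
\end{align*}
Conversely, if $\widehat{\mathcal{L}}_k^A\psi_k=0$, then $\mathcal{L}_k^A\psi_k=\widetilde{\mathcal{L}}_k^A\psi_k=\Lambda^A(k)\psi_k$. So $\psi_k$ is a positive eigenfunction of $\mathcal{L}_k^A$ with eigenvalue $\Lambda^A(k)$. The uniqueness of positive eigenfunctions, in Krein-Rutman form, identifies it with $r_k$, which gives $\lambda_{\max}^A(k)=\Lambda^A(k)$; alternatively, $\Lambda^A(k)\le\lambda_{\max}^A(k)$ combined with the first part gives equality. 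The same chain establishes the equivalent statement that $\psi_k$ is the eigenfunction belonging to $\lambda_{\max}^A(k)$.

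I expect the main obstacle to be functional-analytic rather than algebraic. The steps that need care are: that $r_k$ exists, is positive, and lies in $L^2(\mu)\cap D(\mathcal{L}_k^A)\cap D((\mathcal{L}_k^A)^\dagger)$, so that Lemma~\ref{lem:quad_identity} applies; that the supremum defining $\Lambda^A(k)$ is attained, so there is a discrete top eigenvalue; and that both principal eigenvalues are simple. I would handle these by assuming, as the statement implicitly does, that the principal eigenfunctions exist in the common core of Assumption~\ref{ass:dynamics}. Simplicity and positivity would then rest on the Perron-Frobenius property of the positive Feynman-Kac semigroups $P_t^{k,A}$ and $\e^{t\widetilde{\mathcal{L}}_k^A}$, which are irreducible by uniform ellipticity of $D$.
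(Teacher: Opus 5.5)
Your proposal is correct and follows essentially the same route as the paper. You pair the eigenvalue equation with $r_k$, use Lemma~\ref{lem:quad_identity} to pass to the symmetrized Rayleigh quotient bounded by $\Lambda^A(k)$, identify $r_k=\psi_k$ in the equality case via simplicity of the top eigenvalue of $\widetilde{\mathcal{L}}_k^A$, and for the converse note that $\psi_k$ becomes a positive eigenfunction of $\mathcal{L}_k^A$ with eigenvalue $\Lambda^A(k)$, combining $\Lambda^A(k)\le\lambda_{\max}^A(k)$ with the first part (a step the paper leaves implicit).
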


\begin{proof}
To prove the bound for principal eigenvalues we consider the spectral problem of the respective 
dominant eigenvalues
\begin{align}
\mathcal{L}_k^A r_k &= \lambda^A_{\max}(k) r_k,
\\
\widetilde{\mathcal{L}}_k^A \psi_k &= \Lambda^A(k) \psi_k,
\end{align} 
with short-hand notation $\lambda^A_{\max}(k)\equiv\lambda_{\max}(\mathcal{L}_k^A)$.
From this, it follows that $\langle r_k,\mathcal{L}_k^A r_k\rangle_\mu = \lambda_{\max}^A(k)\| r_k \|^2$
and 
$\langle \psi_k, \widetilde{\mathcal{L}}_k^A \psi_k \rangle_\mu = \Lambda^A(k) \| \psi_k \|^2 $.
Therefore, 
\begin{align}
\lambda^A_{\max}(k)
  =\frac{\langle r_k,\mathcal{L}_k^A r_k\rangle_\mu}{\|r_k\|^2}
  =\frac{\langle r_k,\widetilde{\mathcal{L}}_k^A r_k\rangle_\mu}{\|r_k\|^2}
  \leq \sup_{f} \frac{\langle f,\widetilde{\mathcal{L}}_k^A f\rangle_\mu}{\|f\|^2}
  =\Lambda^A(k)
  =\frac{\langle \psi_k,\widetilde{\mathcal{L}}_k^A \psi_k \rangle_\mu}{\|\psi_k\|^2},
\end{align}
where the first equality follows from Lemma~\ref{lem:quad_identity}
and the second to last equality uses the variational characterization 
of $\Lambda^A(k)$ from Definition~\ref{def:Lambda}.
Notably, equality of the dominant eigenvalues,
$\lambda^A_{\max}(k)=\Lambda^A(k)$, 
forces that $r_k$ maximizes the Rayleigh quotient.
Since $\widetilde{\mathcal{L}}_k^A $ is self-adjoint and $\Lambda^A(k)$
is a simple eigenvalue, the maximizer corresponds to 
the unique dominant eigenfunction , hence $r_k=\psi_k$.
This implies that the antisymmetric part of the tilted generator annihilates
the ground state of the symmetrized tilted generator, $\widehat{\mathcal{L}}_k^A\psi_k=0$,
which follows from 
\begin{align}
\widehat{\mathcal{L}}_k^A\psi_k=(\mathcal{L}_k^A - \widetilde{\mathcal{L}}_k^A )\psi_k
= \lambda^A_{\max}(k) \psi_k - \Lambda^A(k)\psi_k = 0,
\end{align}
using that $\psi_k=r_k$ is the eigenfunction of both operators 
with now equal dominant eigenvalue. 
Conversely, the reverse direction that $\widehat{\mathcal{L}}_k^A\psi_k=0$ implies 
$\lambda^A_{\max}(k)=\Lambda^A(k)$
follows from 
\begin{align}
\mathcal{L}^A_k \psi_k
= \widetilde{\mathcal{L}}_k^A\psi_k + \widehat{\mathcal{L}}_k^A\psi_k
= \Lambda^A(k) \psi_k + \widehat{\mathcal{L}}_k^A\psi_k.
\end{align}
This means that if $\widehat{\mathcal{L}}_k^A\psi_k=0$ then $\psi_k>0$ is an eigenfunction of $\mathcal{L}^A_k$ 
with corresponding eigenvalue $\Lambda^A(k)$, i.e., $\lambda^A_{\max}(k) =\Lambda^A(k)$.
\end{proof}

By Proposition~\ref{prop:tightness} the bound on the eigenvalue $\lambda^A(k)$ is tight if
and only if the antisymmetric part annihilates the ground state,
$\widehat{\mathcal{L}}_k^A\psi_k = 0$. It holds trivially when
the antisymmetric operator vanishes completely, $\widehat{\mathcal{L}}_k^A = 0$, which is
the self-adjoint case of Section~\ref{sec:selfadjoint_tilted}. 
However, more generally, it holds
whenever $\widehat{\mathcal{L}}_k^A \neq 0$ but $\psi_k$ lies in its kernel, i.e., the operator
still acts on non-trivial other functions but annihilates just the ground state $\psi_k$.

\paragraph{Density functionals}
For additive functionals of density-type the antisymmetric part reads
$\widehat{\mathcal{L}}_k^\rho = \mathcal{L}_A = \vs\cdot\nabla$, which is independent of $k$
and of $V$. Equality of the dominant eigenvalues is thus achieved if and only if
$\vs\cdot\nabla\psi_k = 0$. For reversible dynamics this holds directly since $\vs = 0$. In
the irreversible case ($\vs \neq 0$) a constant eigenfunction $\psi_k = \mathbf{1}$ is directly
annihilated by the antisymmetric part, $\vs\cdot\nabla\mathbf{1} = 0$,
without giving conditions on the dynamics or $V$.
Moreover, to check when $\psi_k=\mathbf{1}$ still
corresponds to the symmetric ground state we compute
\begin{align}
  \widetilde{\mathcal{L}}_k^\rho \mathbf{1} = \Lambda^\rho(k) \mathbf{1} = kV(x)\mathbf{1},
  \label{eq:density_eigval_flatground}
\end{align}
where $\widetilde{\mathcal{L}}_k^\rho = \mathcal{L}_S + kV$ 
with $\mathcal{L}_S\mathbf{1} = 0$.
This shows that 
Eq.~\eqref{eq:density_eigval_flatground} takes the form of an eigenvalue equation,
$\widetilde{\mathcal{L}}_k^\rho \mathbf{1} = \Lambda^\rho(k) \mathbf{1}$
with  $\Lambda^\rho(k) = kV$,
only if $V(x)=\mathrm{const.}$ for all $x$ and $k \neq 0$
and in this case it holds that $\Lambda^\rho(k)=kV$.

\paragraph{Current functionals}
For additive functionals of Stratonovich-type the antisymmetric part
is given by $\widehat{\mathcal{L}}_k^J=(\vs+2k DU)\cdot\nabla+k \ps^{-1}\nabla\cdot(\ps DU)$
and now depends on $k$. 
A useful ansatz to check for conditions on the dynamics is again 
$\psi_k = \mathbf{1}$ 
since at $k=0$ the tilted generator reduces to 
$\mathcal{L}_0^J = \mathcal{L}$ where the constant function is
the ground state of $\mathcal{L}_S$, (i.e., $\psi_0 =\mathbf{1}$)
and also $\mathcal{L}$ since $\mathcal{L}_A\mathbf{1}=0$.
The question for equality of the principal eigenvalues thus 
becomes whether we can identify conditions on the dynamics and $U$ such that
$\psi_k=\mathbf{1}$ holds for $k>0$.

The first term of $\widehat{\mathcal{L}}_k^J$ annihilates constants, i.e., $(\vs+2k DU)\cdot\nabla\mathbf{1}=0$,
and we obtain
\begin{align}
\widehat{\mathcal{L}}_k^J \mathbf{1}=k \ps^{-1}\nabla\cdot(\ps DU)\mathbf{1},
\end{align} 
such that $\widehat{\mathcal{L}}_k^J\mathbf{1}=0$ requires $\ps^{-1}\nabla\cdot(\ps DU)=0$.
Moreover, to identify the ansatz $\psi_k=1$ as the symmetric ground state, using ~$\mathcal{L}_S\mathbf{1}=0$,
\begin{align}
\widetilde{\mathcal{L}}_k^J\mathbf{1}= [k(\vs\cdot U)+k^2(U\cdot D U)]\mathbf{1}
\end{align}
must be an eigenvalue problem.
Therefore, the prefactor $[k(\vs\cdot U)+k^2(U\cdot D U)]$ must be
constant in $x$ for $k\neq 0$, which holds only if
$(\vs\cdot U)=\mathrm{const.}$ and $(U\cdot  D U)=\mathrm{const.}$ which is fixed by the dynamics (through $\vs$)
and $U$.
Under these two conditions the principal eigenvalue of the symmetrized tilted generator 
reads $\Lambda^J(k)=k(\vs\cdot U)+k^2(U\cdot D U)$.
If it also holds that $\ps^{-1}\nabla\cdot(\ps DU)=0$ from above,
then equality holds, i.e., $\lambda_{\max}^J(k) = \Lambda^J(k)$.
Notably, this is achieved whenever $\ps$, $\js$, $D$, and $U$ are constant, which crucially does not
require that $\js=0$.

\section{General concentration inequalities}
\label{sec:conc_bound_general}
We now combine the results of the previous sections into concentration inequalities
for the empirical density and current. 
In particular, we require the dynamical version of the Cram\'er-Chernoff
inequality~\eqref{eq:chernoff_functional} and the bound of the moment generating function 
in Eq.~\eqref{eq:mgf_final_bound}. 
After optimizing over the tilt
parameter $k$ we obtain exponential tail bounds that hold at every finite time $t>0$ and
every deviation $a>0$, rather than only in the large-time limit.
Concretely, the exponential bound is expressed in terms of the 
\emph{Cram\'er transform} which is obtained as the
Legendre transform of the principal eigenvalues
$\Lambda^\rho(k)$ and $\Lambda^J(k)$, respectively.

\subsection{Concentration bound for density observables}
\label{sec:conc_density}
\begin{theorem}[A general concentration inequality for density observables]
\label{thm:conc_density}
Let $(X_t)_{t \geq 0}$ be an ergodic diffusion with invariant measure $\mu$, let
$\overline{\rho}_t(V)$ denote the empirical density according to Definition~\ref{def:empirical_density} with 
ergodic (time-independent) mean $\E^\mu[\overline{\rho}_t]$, and
let $\nu \ll \mu$ with $\dd\nu/\dd\mu \in L^2(\mu)$. 
Then for all $t > 0$ and
$a > 0$,
\begin{align}
  \mathbb{P}^\nu\left(\overline{\rho}_t(V) - \E^\mu[\overline{\rho}_t] \geq a\right)
  \leq
  \left\|\frac{\dd\nu}{\dd\mu}\right\|_{L^2(\mu)}
  \exp\bigl[-t I^\rho(\E^\mu[\overline{\rho}_t] + a)\bigr],
  \label{eq:conc_density_right}
\end{align}
where the Cram\'er transform
\begin{align}
I^\rho(s) \equiv \sup_{k \in \mathbb{R}}\bigl[k s - \Lambda^\rho(k)\bigr],
\label{eq:cramer_trafo_general_density}
\end{align}
is defined as the Legendre conjugate of the principal eigenvalue $\Lambda^\rho(k)$~\eqref{eq:Lambda_rho_dirichlet}.
\end{theorem}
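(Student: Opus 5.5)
The plan is to combine the dynamical Chernoff inequality~\eqref{eq:chernoff_functional} with the moment generating function bound of Corollary~\ref{cor:mgf_bound} for $A=\rho$, and then optimize over the tilt. Write $s\equiv\E^\mu[\overline{\rho}_t]+a$, which is independent of $t$ by Proposition~\ref{prop:mean}. For every $k>0$, Markov's inequality applied to $\e^{k\rho_t}$ gives $\mathbb{P}^\nu(\rho_t\ge ts)\le \e^{-kts}\E^\nu[\e^{k\rho_t}]$. Inserting Eq.~\eqref{eq:mgf_final_bound} then yields
\begin{align*}
  \mathbb{P}^\nu\left(\overline{\rho}_t(V) - \E^\mu[\overline{\rho}_t] \geq a\right)
  \le \left\|\frac{\dd\nu}{\dd\mu}\right\|_{L^2(\mu)} \exp\bigl[-t\bigl(ks-\Lambda^\rho(k)\bigr)\bigr],
  \qquad k>0 .
\end{align*}
Taking the infimum over $k>0$ gives the bound with $\sup_{k>0}[ks-\Lambda^\rho(k)]$ in the exponent.

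The remaining step is to show that restricting to $k>0$ loses nothing, i.e.\ that $\sup_{k>0}[ks-\Lambda^\rho(k)]=\sup_{k\in\mathbb{R}}[ks-\Lambda^\rho(k)]=I^\rho(s)$ whenever $s>\E^\mu[V]$. I will use the variational formula~\eqref{eq:Lambda_rho_dirichlet}. Choosing $f=\mathbf 1$ (admissible, since the core contains constants and $\mathcal{E}(\mathbf 1,\mathbf 1)=0$) gives $\Lambda^\rho(k)\ge k\E^\mu[V]$ for all $k\in\mathbb{R}$. Hence for $k\le0$ we have $ks-\Lambda^\rho(k)\le k(s-\E^\mu[V])\le 0$, and this value equals $0$ at $k=0$ because $\Lambda^\rho(0)=0$. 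So the negative half-line contributes at most $0$, which is already attained at $k=0$. The $k=0$ term itself can be included in the infimum trivially, since it gives the bound $\|\dd\nu/\dd\mu\|_{L^2(\mu)}\ge 1$, which holds by Jensen's inequality. Therefore the supremum over $k\ge0$ equals the full Legendre transform, and the claimed inequality follows.

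The only points requiring care are technical. First, $\Lambda^\rho(k)$ must be finite so that the exponent is meaningful. This follows from boundedness of $V$: $\Lambda^\rho(k)\le |k|\,\|V\|_{L^\infty(\mu)}$, since $\mathcal{E}\ge0$. Second, Corollary~\ref{cor:mgf_bound} must hold for the chosen $\nu$ with $\dd\nu/\dd\mu\in L^2(\mu)$, which is exactly its hypothesis. I expect the main (mild) obstacle to be the reduction from $\sup_{k>0}$ to $\sup_{k\in\mathbb{R}}$, which rests on the lower bound $\Lambda^\rho(k)\ge k\E^\mu[V]$ from the trial function $\mathbf 1$. Everything else is a direct assembly of earlier results.
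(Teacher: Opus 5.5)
Your proposal is correct and follows the paper's proof: Chernoff's inequality for $k>0$, then the moment generating function bound of Corollary~\ref{cor:mgf_bound}, then lifting the restriction $k>0$ to the full Legendre transform. The only difference is in that last step: the paper invokes convexity, $(\Lambda^\rho)'(0)=\E^\mu[\overline{\rho}_t]$ and a first-order condition for an optimizer $k^\ast$, whereas your supporting-line bound $\Lambda^\rho(k)\ge k\E^\mu[V]$ from the trial function $\mathbf 1$ (plus the trivial $k=0$ case via $\|\dd\nu/\dd\mu\|_{L^2(\mu)}\ge 1$) reaches the same conclusion without needing differentiability of $\Lambda^\rho$ at $0$ or attainment of the supremum, so it is slightly more robust.
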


\begin{proof}
Applying Chernoff's inequality to the centered variable
$\overline{\rho}_t - \E^\mu[\overline{\rho}_t]$ gives, for any $k>0$,
\begin{align}
  \mathbb{P}^\nu\left(\overline{\rho}_t - \E^\mu[\overline{\rho}_t] \geq a\right)
  \leq \e^{-k t(\E^\mu[\overline{\rho}_t]+a)}\mathbb{E}^\nu\bigl[\e^{k\rho_t}\bigr],
  \label{eq:chernoff_density}
\end{align}
where $\rho_t = t \overline{\rho}_t$.
By the bound on the moment generating function (Corollary~\ref{cor:mgf_bound}), we have
$\E^\nu\bigl[\e^{k\rho_t}\bigr]\leq\|\dd\nu/\dd\mu\|_{L^2(\mu)}\e^{t\Lambda^\rho(k)}$
where $\Lambda^\rho(k)$ is the
principal eigenvalue of $\widetilde{\mathcal{L}}_k^\rho = \mathcal{L}_S
+ kV$. 
Optimizing over $k$ then yields
\begin{align}
  \mathbb{P}^\nu\left(\overline{\rho}_t - \E^\mu[\overline{\rho}_t] \geq a\right)
  &\leq
  \left\|\frac{\dd\nu}{\dd\mu}\right\|_{L^2(\mu)}
  \inf_{k>0}\exp\bigl[t(\Lambda^\rho(k) - k(\E^\mu[\overline{\rho}_t]+a))\bigr]
  \nonumber\\
  &=
  \left\|\frac{\dd\nu}{\dd\mu}\right\|_{L^2(\mu)}
  \exp\Bigl(-t\sup_{k>0}\bigl[k(\E^\mu[\overline{\rho}_t]+a) - \Lambda^\rho(k)\bigr]\Bigr).
  \label{eq:before_legendre}
\end{align}
Since $\Lambda^\rho$ is convex with $\Lambda^\rho(0)=0$ and
$(\Lambda^\rho)'(0)= \E^\mu[\overline{\rho}_t]$ for $a>0$ the optimizer satisfies
$(\Lambda^\rho)'(k^*)=\E^\mu[\overline{\rho}_t]+a>\E^\mu[\overline{\rho}_t]$ hence $k^*>0$.
Introducing the Cram\'er transform $I^\rho(s) \equiv \sup_{k \in \mathbb{R}}\bigl[k s - \Lambda^\rho(k)\bigr]$,
the restricted supremum therefore coincides with the full Legendre transform $I^\rho(\E^\mu[\overline{\rho}_t]+a)$
and completes the proof.
\end{proof}

\begin{corollary}[Left tail and two-sided bounds for density observables]
\label{cor:two_sided_density}
Under the assumptions of
Theorem~\ref{thm:conc_density},
for all $t>0$ and $a>0$, the left tail is bounded via
\begin{align}
  \mathbb{P}^\nu\left(\overline{\rho}_t - \E^\mu[\overline{\rho}_t] \leq -a\right)
  \leq
  \left\|\frac{\mathrm{d}\nu}{\mathrm{d}\mu}\right\|_{L^2(\mu)}
  \exp\bigl[-tI^\rho(\E^\mu[\overline{\rho}_t] - a)\bigr].
  \label{eq:conc_density_left}
\end{align}
Consequently, the two-sided
deviation probability satisfies
\begin{align}
  \mathbb{P}^\nu\left(\bigl|\overline{\rho}_t - \E^\mu[\overline{\rho}_t]\bigr|\geq a\right)
  \leq
  2\left\|\frac{\dd\nu}{\dd\mu}\right\|_{L^2(\mu)}
  \exp\bigl[-t\min\bigl(I^\rho(\E^\mu[\overline{\rho}_t] + a),I^\rho(\E^\mu[\overline{\rho}_t] - a)\bigr)\bigr].
  \label{eq:conc_density_two_sided}
\end{align}
\end{corollary}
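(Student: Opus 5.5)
The left tail reduces to the right tail of Theorem~\ref{thm:conc_density} applied to $-V$. Set $V'=-V$, which is again bounded and measurable. By Definition~\ref{def:empirical_density}, $\overline{\rho}_t(V')=-\overline{\rho}_t(V)$, and by Proposition~\ref{prop:mean}, $\E^\mu[\overline{\rho}_t(V')]=-\E^\mu[\overline{\rho}_t(V)]$. Hence
\begin{align*}
\mathbb{P}^\nu\bigl(\overline{\rho}_t(V)-\E^\mu[\overline{\rho}_t(V)]\le -a\bigr)=\mathbb{P}^\nu\bigl(\overline{\rho}_t(V')-\E^\mu[\overline{\rho}_t(V')]\ge a\bigr).
\end{align*}
Theorem~\ref{thm:conc_density} bounds the right-hand side by $\|\dd\nu/\dd\mu\|_{L^2(\mu)}\exp[-tI^{\rho,V'}(-\E^\mu[\overline{\rho}_t(V)]+a)]$, where $I^{\rho,V'}$ is built from $\Lambda^{\rho,V'}$.

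Next we relate the two Cram\'er transforms. From the variational formula \eqref{eq:Lambda_rho_dirichlet}, $\Lambda^{\rho,V'}(k)=\Lambda^{\rho,V}(-k)$, since $k\langle -V,f^2\rangle_\mu=(-k)\langle V,f^2\rangle_\mu$ and the domain $D(\mathcal{L}+kV')=D(\mathcal{L})$ does not depend on $k$. Substituting $k\mapsto -k$ in the full supremum over $k\in\mathbb{R}$ gives
\begin{align*}
I^{\rho,V'}(s)=\sup_k\bigl[ks-\Lambda^{\rho,V}(-k)\bigr]=\sup_k\bigl[k(-s)-\Lambda^{\rho,V}(k)\bigr]=I^{\rho}(-s).
\end{align*}
Taking $s=-\E^\mu[\overline{\rho}_t]+a$ yields exactly \eqref{eq:conc_density_left}. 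Alternatively, one can run the proof of Theorem~\ref{thm:conc_density} directly with $k<0$ in Chernoff's inequality. Either way, the only delicate point is the one the theorem already settled: the restricted supremum, here over $k<0$, equals the full Legendre transform. This follows from convexity of $\Lambda^\rho$, $\Lambda^\rho(0)=0$, and $(\Lambda^\rho)'(0)=\E^\mu[\overline{\rho}_t]>\E^\mu[\overline{\rho}_t]-a$, so any optimizer has $k^*<0$. If differentiability at $0$ is in doubt, the restricted supremum still only makes the bound weaker relative to the full $I^\rho$ at the level of the exponent, so we rely on the same argument used in the theorem.

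For the two-sided bound, write $\{|\overline{\rho}_t-\E^\mu[\overline{\rho}_t]|\ge a\}$ as the union of the two one-sided events and apply a union bound. Each term is bounded by $\|\dd\nu/\dd\mu\|_{L^2(\mu)}\exp[-tI^\rho(\E^\mu[\overline{\rho}_t]\pm a)]$, and each exponential is at most $\exp[-t\min(\cdot,\cdot)]$. Summing the two terms gives the factor $2$ in \eqref{eq:conc_density_two_sided}. No step is a real obstacle; the only care needed is the sign bookkeeping in the Legendre transform.
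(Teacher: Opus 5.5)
Your proposal is correct and is essentially the paper's proof: apply Theorem~\ref{thm:conc_density} to $-V$, use $\Lambda^\rho(k)\mapsto\Lambda^\rho(-k)$ to get $I^\rho(s)\mapsto I^\rho(-s)$, and finish with a union bound. Your side remark on the direct $k<0$ route is muddled: a strictly smaller restricted supremum would \emph{not} give the stated bound with the full $I^\rho$. It is also unnecessary, since you use the theorem as a black box; in any case, taking $f=\mathbf{1}$ gives $\Lambda^\rho(k)\ge k\E^\mu[V]$, so by continuity of the finite convex $\Lambda^\rho$ the supremum over the wrong sign of $k$ is $0$ and no differentiability at $0$ is needed.
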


\begin{proof}
For the left tail we directly apply Theorem~\ref{thm:conc_density}
to $-\overline{\rho}_t(V)$ (or $-V$) now with mean $-\E^\mu[\overline{\rho}_t]$.
From Eq.~\eqref{eq:Lambda_rho_dirichlet} we see that replacing $V$ by $-V$ 
maps $\Lambda^\rho(k) \mapsto \Lambda^\rho(-k)$
and therefore $I^\rho(s) \mapsto I^\rho(-s)$ by symmetry
of the Legendre transform under sign change.
The two-sided result is then obtained by a union bound which gives Eq.~\eqref{eq:conc_density_two_sided}.
\end{proof}

\subsection{Concentration bound for current observables}
\label{sec:conc_current}
We now turn to current observables which are expressed as 
additive functionals of Stratonovich type.
Our argument follows the density case
(Theorem~\ref{thm:conc_density}) by using the Cram\'er-Chernoff method together
with the bound on the moment-generating function of Eq.~\eqref{eq:mgf_final_bound}, which reduces the
problem of bounding the tail behavior to the principal eigenvalue $\Lambda^J(k)$. 
However, we highlight that the current-specific structure of
$\Lambda^J(k)$---i.e., the terms $k(\js\cdot U)/\ps$ and $k^2(U\cdot DU)$---give rise to a different Cram\'er transform $I^J$.

\begin{theorem}[General concentration inequality for empirical currents]
\label{thm:conc_current}
Let $(X_t)_{t \geq 0}$ be an ergodic diffusion with invariant measure $\mu$, let
$\overline{J}_t(U)$ be the empirical current in the sense of Definition~\ref{def:empirical_current} with 
ergodic mean $\E^\mu[\overline{J}_t] = \int \js \cdot U \dd x$, and let $\nu \ll \mu$
with $\dd\nu/\dd\mu \in L^2(\mu)$. Then for all $t > 0$ and $a > 0$,
\begin{align}
  \mathbb{P}^\nu\left(\overline{J}_t(U) - \E^\mu[\overline{J}_t] \geq a\right)
  \leq
  \left\|\frac{\dd\nu}{\dd\mu}\right\|_{L^2(\mu)}
  \exp\bigl[-tI^J\bigl(\E^\mu[\overline{J}_t] + a\bigr)\bigr],
  \label{eq:conc_current_right}
\end{align}
where the Cram\'er transform
\begin{align}
  I^J(s) \equiv \sup_{k \in \mathbb{R}}\bigl[k s - \Lambda^J(k)\bigr],
  \label{eq:cramer_curr_def}
\end{align}
is defined as the Legendre conjugate of the principal eigenvalue $\Lambda^J(k)$~\eqref{eq:Lambda_J_dirichlet}.
\end{theorem}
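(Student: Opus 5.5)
The plan mirrors the proof of Theorem~\ref{thm:conc_density}, replacing the density-specific inputs by their current counterparts. First, I apply Chernoff's inequality~\eqref{eq:chernoff_functional} to the accumulated current $J_t=t\overline{J}_t(U)$. For every $k>0$ this gives
\begin{align*}
  \mathbb{P}^\nu\left(\overline{J}_t-\E^\mu[\overline{J}_t]\geq a\right)
  \leq \e^{-kt(\E^\mu[\overline{J}_t]+a)}\E^\nu\bigl[\e^{kJ_t}\bigr].
\end{align*}
Second, I bound the moment generating function with Corollary~\ref{cor:mgf_bound} for $A=J$. That corollary rests on the Cauchy--Schwarz step~\eqref{eq:mgf_cs} and on the Lumer--Phillips norm bound of Proposition~\ref{prop:fk_norm_bound}. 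The latter needs only dissipativity of $\mathcal{L}_k^J-\Lambda^J(k)$, which holds by Definition~\ref{def:Lambda}, even though $\mathcal{L}_k^J$ is not self-adjoint. The result is $\E^\nu[\e^{kJ_t}]\leq\|\dd\nu/\dd\mu\|_{L^2(\mu)}\e^{t\Lambda^J(k)}$. Combining the two steps and taking the infimum over $k>0$, I obtain the prefactor $N_\nu$ times $\exp(-t\sup_{k>0}[k(\E^\mu[\overline{J}_t]+a)-\Lambda^J(k)])$.

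Third, I show that the restricted supremum over $k>0$ equals the full Legendre transform $I^J(\E^\mu[\overline{J}_t]+a)$. By the variational formula~\eqref{eq:Lambda_J_dirichlet}, $\Lambda^J$ is a supremum of functions $k\mapsto k\langle W,f^2\rangle_\mu+k^2\langle Q,f^2\rangle_\mu-\mathcal{E}(f,f)$, each convex because $Q=U\cdot DU\geq0$. Hence $\Lambda^J$ is convex. Moreover $\Lambda^J(0)=0$, attained at $f=\mathbf{1}$. Inserting the test function $f=\mathbf{1}$ gives $\Lambda^J(k)\geq k\E^\mu[W]+k^2\E^\mu[Q]$, and by Proposition~\ref{prop:mean} we have $\E^\mu[W]=\E^\mu[\overline{J}_t]$. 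I avoid differentiability by the following comparison. For $k\leq0$,
\begin{align*}
  ks-\Lambda^J(k)\leq k\bigl(s-\E^\mu[\overline{J}_t]\bigr)-k^2\E^\mu[Q]\leq 0
\end{align*}
whenever $s=\E^\mu[\overline{J}_t]+a$ with $a>0$. Since the value at $k=0$ is $0$, the supremum over $k\in\mathbb{R}$ is achieved in the limit over $k>0$, i.e.\ $\sup_{k>0}=\sup_{k\in\mathbb{R}}$. This gives~\eqref{eq:conc_current_right}. The same reasoning also shows $I^J\geq0$, so the bound is meaningful.

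The main obstacle is not the algebra but the justification that Corollary~\ref{cor:mgf_bound} applies to the current semigroup. This requires that $P_t^{k,J}$ be a strongly continuous semigroup on $L^2(\mu)$ with generator $\mathcal{L}_k^J$ (Proposition~\ref{prop:tilted_gen_current}). It also requires that $\Lambda^J(k)<\infty$, which holds since $\Lambda^J(k)\leq |k|\|W\|_{L^\infty(\mu)}+k^2\|Q\|_{L^\infty(\mu)}$ for bounded $U$ and bounded $\vs$. If $\vs$ is unbounded, $\Lambda^J(k)$ may equal $+\infty$, and then the bound holds trivially. Finally, the Lumer--Phillips range condition for the shifted generator must hold; I would take it as implied by the generator property, since $\mathcal{L}_k^J-\Lambda^J(k)$ is a bounded shift of a generator.
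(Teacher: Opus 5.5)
Your proposal is correct and follows the paper's proof step for step: Chernoff's inequality for $k>0$, then Corollary~\ref{cor:mgf_bound} to bound the moment generating function, then identification of the restricted supremum over $k>0$ with the full Cram\'er transform $I^J$. The only difference is that final step. The paper uses convexity, $(\Lambda^J)'(0)=\E^\mu[\overline{J}_t]$ and a first-order condition for an optimizer $k^\ast>0$. You instead use the lower bound $\Lambda^J(k)\geq k\E^\mu[W]+k^2\E^\mu[Q]$ from the test function $f=\mathbf{1}$, which shows directly that $k\leq 0$ contributes at most $0$. This avoids assuming that $\Lambda^J$ is differentiable or that a maximizer exists; you only need $\Lambda^J(k)\to 0$ as $k\downarrow 0$, and your two-sided estimate on $\Lambda^J$ supplies that.
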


\begin{proof}
The argument follows Theorem~\ref{thm:conc_density} with $A=J$.
By Chernoff's inequality, for any $k>0$,
\begin{align}
  \mathbb{P}^\nu\left(\overline{J}_t(U) - \E^\mu[\overline{J}_t] \geq a\right)
  \leq \e^{-kt(\E^\mu[\overline{J}_t] + a)}\mathbb{E}^\nu\bigl[\e^{kJ_t(U)}\bigr].
  \label{eq:chernoff_current}
\end{align}
with $J_t = t \overline{J}_t$.
By the bound on the moment generating function (Corollary~\ref{cor:mgf_bound}), we have
$\E^\nu[\e^{k J_t}]\leq\|\dd\nu/\dd\mu\|_{L^2(\mu)}\e^{t\Lambda^J(k)}$
where $\Lambda^J(k)$ is the
principal eigenvalue of $\widetilde{\mathcal{L}}_k^J = \mathcal{L}_S
+ k(\vs\cdot U) + k^2(U\cdot DU)$. 
Optimizing over $k$ then yields
\begin{align}
  \mathbb{P}^\nu\left(\overline{J}_t(U)-\E^\mu[\overline{J}_t] \geq a\right)
  &\leq
  \left\|\frac{\dd\nu}{\dd\mu}\right\|_{L^2(\mu)}
  \inf_{k>0}\exp\bigl[t(\Lambda^J(k) - k(\E^\mu[\overline{J}_t]+a))\bigr]
  \nonumber\\
  &=
  \left\|\frac{\dd\nu}{\dd\mu}\right\|_{L^2(\mu)}
  \exp\Bigl(-t\sup_{k>0}\bigl[k(\E^\mu[\overline{J}_t]+a) - \Lambda^J(k)\bigr]\Bigr).
\end{align}
Since $\Lambda^J$ is convex with $\Lambda^J(0)=0$ and
$(\Lambda^J)'(0)=\E^\mu[\overline{J}_t]$ for $a>0$ the optimal $k^\ast$ satisfies
$(\Lambda^J)'(k^*)=\E^\mu[\overline{J}_t]+a$ hence $k^*>0$.
Introducing the Cram\'er transform $I^J(s) \equiv \sup_{k \in \mathbb{R}}\bigl[k s - \Lambda^J(k)\bigr]$,
the restricted supremum therefore equals the full Legendre transform $I^J(\E^\mu[\overline{J}_t]+a)$ 
and completes the proof.
\end{proof}

\begin{corollary}[Two-sided bound for empirical currents]
\label{cor:two_sided_current}
Under the assumptions of Theorem~\ref{thm:conc_current}, for all $t>0$ and $a>0$,
\begin{align}
  \mathbb{P}^\nu\left(\overline{J}_t(U) - \E^\mu[\overline{J}_t] \leq -a\right)
  &\leq
  \left\|\frac{\dd\nu}{\dd\mu}\right\|_{L^2(\mu)}
  \exp\bigl[-tI^J\bigl(\E^\mu[\overline{J}_t] - a\bigr)\bigr],
  \label{eq:conc_current_left}
\end{align}
and consequently
\begin{align}
  \mathbb{P}^\nu\left(\bigl|\overline{J}_t(U) - \E^\mu[\overline{J}_t]\bigr| \geq a\right)
  \leq
  2\left\|\frac{\dd\nu}{\dd\mu}\right\|_{L^2(\mu)}
  \exp\Bigl[-t\min\bigl(I^J(\E^\mu[\overline{J}_t] + a), I^J(\E^\mu[\overline{J}_t] - a)\bigr)\Bigr].
  \label{eq:two_sided_current}
\end{align}
\end{corollary}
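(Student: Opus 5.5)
The left tail follows by applying Theorem~\ref{thm:conc_current} to the current with $U$ replaced by $-U$, exactly as was done for density observables in Corollary~\ref{cor:two_sided_density}. The two-sided bound then follows from a union bound.

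\emph{Reflection step.} Since $\overline{J}_t(-U)=-\overline{J}_t(U)$ pathwise, the event $\{\overline{J}_t(U)-\E^\mu[\overline{J}_t(U)]\le -a\}$ coincides with $\{\overline{J}_t(-U)-\E^\mu[\overline{J}_t(-U)]\ge a\}$. By Proposition~\ref{prop:mean}, the stationary mean of $\overline{J}_t(-U)$ is $-\E^\mu[\vs\cdot U]$. The vector field $-U$ still lies in $C^1_b$, so Theorem~\ref{thm:conc_current} applies to it. This gives the bound $\|\dd\nu/\dd\mu\|_{L^2(\mu)}\exp[-t I^J_{-U}(-\E^\mu[\overline{J}_t]+a)]$, where $I^J_{-U}$ denotes the Cram\'er transform built from $\Lambda^J_{-U}$.

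\emph{Key step: identifying $\Lambda^J_{-U}$.} I would compute $\Lambda^J_{-U}$ from the variational formula \eqref{eq:Lambda_J_dirichlet}. Replacing $U$ by $-U$ flips the sign of the linear term $k\langle \vs\cdot U,f^2\rangle_\mu$. It leaves the quadratic term $k^2\langle U\cdot DU,f^2\rangle_\mu$ and $\mathcal{E}(f,f)$ unchanged. The feasible set is also unchanged, since $D(\mathcal{L}^J_k)$ contains the common core and the domain is symmetric under $k\mapsto -k$. Hence $\Lambda^J_{-U}(k)=\Lambda^J_U(-k)$. Substituting $k\mapsto -k$ in the Legendre transform then gives
\[
I^J_{-U}(s)=\sup_k[ks-\Lambda^J(-k)]=I^J(-s).
\]
With $s=-\E^\mu[\overline{J}_t]+a$, this yields the exponent $I^J(\E^\mu[\overline{J}_t]-a)$, which proves \eqref{eq:conc_current_left}. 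The one point needing care is this even symmetry of the quadratic term. It holds because $Q=U\cdot DU$ is quadratic in $U$, so it is a non-issue; it is the only place where the current case differs from the density case, and the argument goes through unchanged.

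\emph{Union bound.} The event $\{|\overline{J}_t-\E^\mu[\overline{J}_t]|\ge a\}$ is the union of the right-tail and left-tail events. Bounding the sum of the two probabilities by twice the larger one, and writing the larger exponential with the minimum of the two rates, gives \eqref{eq:two_sided_current}.
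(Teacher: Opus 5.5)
Your proposal is correct and follows the same route as the paper. You apply Theorem~\ref{thm:conc_current} to $-U$ and read off $\Lambda^J_{-U}(k)=\Lambda^J(-k)$ from the variational formula \eqref{eq:Lambda_J_dirichlet}: only the $W=\vs\cdot U$ term flips sign, while $Q=U\cdot DU$ is even. Hence $I^J_{-U}(s)=I^J(-s)$, and a union bound finishes the argument. Your explicit bookkeeping of $s=a-\E^\mu[\overline{J}_t]$ and of the domain fills in details the paper leaves implicit; the domain point is cleanest to justify by noting that $\mathcal{L}_k^J$ depends on $(k,U)$ only through $kU$.
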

\begin{proof}
Apply Theorem~\ref{thm:conc_current} to the observable $-U$. Since
$J_t(-U)=-J_t(U)$ and Eq.~\eqref{eq:Lambda_J_dirichlet} we get $\Lambda^J(-k)$
and the Cram\'er transform becomes $s\mapsto I^J(-s)$ which gives the
left-tail bound~\eqref{eq:conc_current_left}. A union bound
yields Eq.~\eqref{eq:two_sided_current} and completes the proof.
\end{proof}

\begin{corollary}[Sub-Gaussian concentration under detailed balance]
\label{cor:db_subgaussian}
    Let assumptions of Theorem~\ref{thm:conc_current} hold
    and further assume that the dynamics is reversible, i.e., $\js\equiv0$.
    By Proposition~\ref{prop:mean} $\E^\mu[\overline{J}_t]=0$ for every admissible $U$,
    and for all $t>0$ and $a>0$ it holds
\begin{align}
  \mathbb{P}^\nu\bigl(\overline{J}_t(U) \geq a\bigr)
  &\leq
  \left\|\frac{\dd\nu}{\dd\mu}\right\|_{L^2(\mu)}
  \exp\left[-\frac{ta^2}{4\| Q\|_{L^\infty(\mu)}}\right],
  \label{eq:subgau_DB}
\end{align}
where $Q\equiv U\cdot DU$. Therefore, $\overline{J}_t$ is sub-Gaussian with 
\emph{fully explicit} variance proxy $2\| Q\|_{L^\infty(\mu)}/t$ for all $t>0$ and $a>0$.
\end{corollary}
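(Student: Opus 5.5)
Under reversibility $\js\equiv0$, so $\vs=\js/\ps\equiv0$. Proposition~\ref{prop:mean} then gives $\E^\mu[\overline{J}_t]=\E^\mu[\vs\cdot U]=0$, which is the first claim. It remains to bound the right tail. The plan is to bound $\Lambda^J(k)$ directly from the variational formula of Proposition~\ref{prop:Lambda_variational}, without any Poincar\'e inequality, and then feed this into the Chernoff step of Theorem~\ref{thm:conc_current}.

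\textbf{Step 1 (bound on $\Lambda^J$).} With $\vs=0$ the linear term in \eqref{eq:Lambda_J_dirichlet} drops out, so
\begin{align*}
  \Lambda^J(k)=\sup_{\|f\|_{L^2(\mu)}=1}\Bigl[k^2\langle Q,f^2\rangle_\mu-\mathcal{E}(f,f)\Bigr].
\end{align*}
Since $\mathcal{E}(f,f)\ge0$ and $Q\ge0$, we have $\langle Q,f^2\rangle_\mu\le\|Q\|_{L^\infty(\mu)}\|f\|^2_{L^2(\mu)}$. This yields $\Lambda^J(k)\le k^2\|Q\|_{L^\infty(\mu)}$ for all $k\in\mathbb{R}$. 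The quantity $\|Q\|_{L^\infty(\mu)}$ is finite because $U\in C^1_b$ and $D$ is constant.

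\textbf{Step 2 (Chernoff optimization).} By Corollary~\ref{cor:mgf_bound} and Chernoff's inequality \eqref{eq:chernoff_current} with mean zero, for every $k>0$,
\begin{align*}
  \mathbb{P}^\nu(\overline{J}_t\ge a)\le\Bigl\|\frac{\dd\nu}{\dd\mu}\Bigr\|_{L^2(\mu)}\exp\bigl[-t\bigl(ka-k^2\|Q\|_{L^\infty(\mu)}\bigr)\bigr].
\end{align*}
Maximizing the exponent over $k>0$ gives $k^\ast=a/(2\|Q\|_{L^\infty(\mu)})$ and the value $a^2/(4\|Q\|_{L^\infty(\mu)})$. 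This is exactly \eqref{eq:subgau_DB}.

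Rewriting the exponent as $a^2/(2s^2)$ with $s^2=2\|Q\|_{L^\infty(\mu)}/t$ identifies the variance proxy $2\|Q\|_{L^\infty(\mu)}/t$. Equivalently, $\Lambda^J(k)\le k^2\sigma^2/2$ with $\sigma^2=2\|Q\|_{L^\infty(\mu)}$. The same bound holds for $-U$, since $Q$ is quadratic in $U$, so the estimate is two-sided.

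The argument is essentially routine. The only point to check is the degenerate case $Q\equiv0$, which forces $U\equiv0$ because $D>0$; there the statement is trivial, read as zero probability for $a>0$. The main conceptual step is noticing that detailed balance kills the linear term, so no spectral-gap control is needed.
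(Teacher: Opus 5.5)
Your proposal is correct and follows essentially the paper's argument. You use $\vs=0$ to kill the linear term, bound $\Lambda^J(k)\le k^2\|Q\|_{L^\infty(\mu)}$ using only $\mathcal{E}(f,f)\ge0$, and optimize the Chernoff exponent. The only cosmetic difference is that you optimize directly over $k>0$, whereas the paper lower-bounds $I^J(a)$ by $\sup_{k\in\mathbb{R}}\bigl[ka-k^2\|Q\|_{L^\infty(\mu)}\bigr]$ and invokes Theorem~\ref{thm:conc_current}; both give $a^2/(4\|Q\|_{L^\infty(\mu)})$.
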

\begin{proof}
    Under detailed balance we have $\vs=0$ and the variational formula of $\Lambda^J$ from Proposition~\ref{prop:Lambda_variational} reduces, $k\in\mathbb{R}$, to
    \begin{align}
        \Lambda^J(k) 
        =
        \sup\left\{k^2\langle U\cdot DU,f^2\rangle_\mu-\mathcal{E}(f,f)
    \mathrel{\Big|} f \in D(\mathcal{L}_k^J),\|f\|_{L^2(\mu)} = 1
  \right\}
  \leq k^2 \| Q\|_{L^\infty(\mu)},
    \end{align}
    since $\langle Q,f^2\rangle_\mu\leq\|Q\|_{L^\infty(\mu)}$ for $\|f\|_{L^2(\mu)}=1$ and $\mathcal{E}(f,f)\geq 0$. The Cram\'er transform
    is correspondingly bounded from below
    \begin{align}
  I^J(a) \geq \sup_{k\in\mathbb{R}}\bigl[ka - k^2\|Q\|_{L^\infty(\mu)}\bigr]
  = \frac{a^2}{4\|Q\|_{L^\infty(\mu)}},
\end{align}
and yields the concentration bound.
\end{proof}
Strikingly, Corollary~\ref{cor:db_subgaussian}
requires only the non-negativity of the Dirichlet form $\mathcal{E}(f,f)$
and does \emph{not} rely on the Poincar\'e inequality, i.e., 
Eq.~\eqref{eq:subgau_DB} holds under Assumptions~\ref{ass:dynamics}
irrespective of whether a spectral gap exists. Notably, when $Q=\mathrm{const.}$ (e.g., constant $U$) we have that $\Lambda^J(k)=k^2Q$ exactly.

\begin{remark}[Asymptotic sharpness in the large deviation sense]\label{rem:ldt_sharpness}
In the language of large deviation theory 
additive functionals satisfy a large deviation principle 
(a continuous time Markovian analog of Cram\'er's Theorem 
for sums of i.i.d.~random variables \cite{touchette2009large,touchette2018introduction}) which 
states that asymptotically, the tail probability of additive functionals
decays exponentially, with
\begin{align}
\lim_{t\to\infty}-\frac{1}{t}\log\mathbb{P}\bigl(\overline{A}_t\geq a \bigr) = I^A_{\mathrm{LD}}(a),
\end{align}
where $I^A_{\mathrm{LD}}(a)$ denotes the so-called rate function.
Importantly, the G\"artner-Ellis theorem \cite{gartner1977large,ellis1984large,touchette2009large} states that the rate function
is obtained via the Legendre transform 
$I^A_{\mathrm{LD}}(a)=\sup_{k\in\mathbb{R}}\bigl[ ka - \lambda^A_{\mathrm{LD}}(k)\bigr]$, 
with the scaled cumulant generating 
function defined as $\lambda^A_{\mathrm{LD}}(k)\equiv\lim_{t\to\infty} \tfrac{1}{t}\log\E^x\bigl[\e^{kt\overline{A}_t} \bigr]$,
and assuming it exists and is differentiable.
Using the Feynman-Kac bound 
\begin{align}
\log\E^\nu\bigl[\e^{kt\overline{A}_t}\bigr] \leq
 t\Lambda^A(k)
 +  \log\left\|\frac{\dd\nu}{\dd\mu}\right\|_{L^2(\mu)},
\end{align}
we obtain the estimate
\begin{align}
\lambda_{\mathrm{LD}}^A(k)\leq \Lambda^A(k),
\end{align}
where the dependence on the initial measure $\nu$ drops out as $t\to\infty$.
Using an expansion of the Feynman-Kac semigroups with the full tilted generator $\mathcal{L}_k^A$ in 
terms of eigenfunctions $r_k^{(i)}$ and eigenvalues $\lambda_i^A(k)$
it can be shown~\cite{touchette2018introduction} that in the limit $t\to\infty$
the eigenvalue with largest real part dominates, such that it is the scaled cumulant generating
function, i.e., $\lambda_{\mathrm{LD}}^A(k)=\lambda_{\max}(\mathcal{L}_k^A)\equiv\lambda_{\max}^A(k)$.
Consequently, 
the non-asymptotic concentration bounds of Theorems~\ref{thm:conc_density} and~\ref{thm:conc_current}
are asymptotically sharp in the sense of large deviation theory
whenever the principal eigenvalues of the full and symmetrized tilted generators coincide, $\lambda_{\max}^A(k)=\Lambda^A(k)$, 
which is discussed in detail after Proposition~\ref{prop:tightness}.
\end{remark}

\section{Concentration bounds via functional inequalities}
\label{sec:conc_poincare}
The general bounds in Theorems~\ref{thm:conc_density} and~\ref{thm:conc_current} 
control the tail probabilities of  
the additive functionals $\overline{\rho}_t(V)$
and $\overline{J}_t(U)$ in terms of their respective Cram\'er transforms $I^\rho(s)$ and $I^J(s)$, 
defined as the Legendre transforms of the principal eigenvalues $\Lambda^\rho(k)$ and
$\Lambda^J(k)$. However, evaluating these eigenvalues as functions of the tilt $k$ requires
solving, for each value of $k$, the associated self-adjoint spectral problems for the symmetrized tilted generators
(Definition~\ref{def:Lambda}).
The goal of this section is to replace this 
spectral computation with a \emph{functional inequality}.
Concretely, the Poincar\'e inequality 
$\Var_\mu(f) \leq \lambda_{\textrm{gap}}^{-1}\mathcal{E}(f,f)$ (see Eq.~\eqref{eq:poincare})
allows us to derive fully explicit upper bounds on $\Lambda^\rho(k)$ and
$\Lambda^J(k)$ in terms of the spectral gap $\lambda_\mathrm{gap}$ of the
symmetric part of the underlying dynamics, $\mathcal{L}_S$, and the norms of the 
functions $V$ and $U$ that define the path observables. 
In particular, these bounds yield closed-form sub-gamma and
classical Bernstein-type concentration inequalities.

\subsection{A sub-gamma Cram\'er transform}
The explicit bounds on the principal eigenvalues  will involve
the characteristic sub-gamma structure 
$\tilde\sigma_A^2 k^2/[2(1-c_A k)]$~\cite{boucheron2013concentration}
with parameters $\tilde\sigma_A^2$ and $c_A$ 
fixed by the observable and the
spectral gap.
We therefore briefly re-state its corresponding Cram\'er transform for convenience in the following Lemma.

\begin{lemma}[Sub-gamma Cram\'er transform]
\label{lem:subgamma}
Let $\tilde\sigma^2 > 0$ and $c > 0$. 
Consider the convex function on $[0, 1/c)$
\begin{align}
\phi(k) = \frac{k^2\tilde\sigma^2}{2(1 - ck)}.
\end{align}
Its Legendre transform $\phi^\ast(a)$ is, for $a > 0$,
\begin{align}
  \phi^\ast(a)
  \equiv \sup_{0 \leq k < 1/c}\left[ka - \frac{\tilde\sigma^2 k^2}{2(1 - ck)}\right]
  = \frac{\tilde\sigma^2}{c^2}h\left(\frac{c a}{\tilde\sigma^2}\right),
  \label{eq:subgamma_legendre}
\end{align}
with supremum attained at $k^\ast = c^{-1}\bigl(1 - \sqrt{\tilde\sigma^2/(\tilde\sigma^2 + 2ca)}\bigr) > 0$
and we define the auxiliary function $h(u) \equiv 1 + u - \sqrt{1 + 2u}$.
Moreover, we have $h(u) \geq u^2/[2(1+u)]$ for $u \geq 0$, i.e.,
\begin{align}
  \phi^\ast(a) \geq \frac{a^2}{2(\tilde\sigma^2 + ca)}.
  \label{eq:subgamma_bernstein}
\end{align}
\end{lemma}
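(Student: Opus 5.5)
The proof is a direct one-variable calculus computation. I would first note convexity on $[0,1/c)$. Writing $\phi(k)=\frac{\tilde\sigma^2}{2c^2}\bigl[(1-ck)+\frac{1}{1-ck}-2\bigr]$ (check: $(1-ck)^2-2(1-ck)+1=c^2k^2$) shows that $\phi$ is a sum of a linear function and a convex function of $k$. Hence $g(k)=ka-\phi(k)$ is strictly concave, with $g(0)=0$ and $g(k)\to-\infty$ as $k\uparrow 1/c$. The supremum is therefore attained at the unique interior critical point, provided that point is positive.

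Next I would solve the first-order condition. Set $s=1-ck\in(0,1]$. From the representation above,
\begin{align*}
\phi'(k)=\frac{\tilde\sigma^2}{2c}\bigl(s^{-2}-1\bigr),
\end{align*}
so $\phi'(k)=a$ becomes $s^{-2}=1+2ca/\tilde\sigma^2$. This gives $s^\ast=\sqrt{\tilde\sigma^2/(\tilde\sigma^2+2ca)}\in(0,1)$ for $a>0$, which is exactly $k^\ast=c^{-1}(1-s^\ast)>0$.

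Then I would substitute back. Let $u=ca/\tilde\sigma^2$, so $s^\ast=(1+2u)^{-1/2}$ and $a=\tilde\sigma^2u/c$. One gets
\begin{align*}
g(k^\ast)=\frac{\tilde\sigma^2}{c^2}\Bigl[u(1-s^\ast)-\tfrac12\bigl(s^\ast+1/s^\ast-2\bigr)\Bigr].
\end{align*}
Using $1/s^\ast=\sqrt{1+2u}$ and $s^\ast=\sqrt{1+2u}/(1+2u)$, the bracket should simplify to $1+u-\sqrt{1+2u}=h(u)$. This algebraic simplification is the only place where care is needed, and it is the main step I expect to check. A clean way to do it is to put $r=\sqrt{1+2u}$, so $u=(r^2-1)/2$. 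The bracket becomes
\begin{align*}
\frac{r^2-1}{2}-\frac{r^2-1}{2r}-\frac{1}{2r}-\frac{r}{2}+1=\frac{r^2+1}{2}-r=\frac{(r-1)^2}{2},
\end{align*}
and indeed $h(u)=1+u-r=(r-1)^2/2$. This establishes the closed form of $\phi^\ast(a)$.

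Finally, for the Bernstein form, I would use $h(u)=(r-1)^2/2$ with $r-1=\frac{2u}{r+1}$, giving $h(u)=\frac{2u^2}{(1+r)^2}$. So $h(u)\ge u^2/[2(1+u)]$ is equivalent to $(1+r)^2\le 4(1+u)=2(r^2+1)$, i.e.\ $(r-1)^2\ge0$, which holds. Multiplying by $\tilde\sigma^2/c^2$ and substituting $u=ca/\tilde\sigma^2$ yields $\phi^\ast(a)\ge a^2/[2(\tilde\sigma^2+ca)]$.
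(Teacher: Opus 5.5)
Your proof is correct and follows the same route as the paper: substitute $w=1-ck$, solve the first-order condition to get $w^\ast=\sqrt{\tilde\sigma^2/(\tilde\sigma^2+2ca)}$, and substitute back to obtain $\frac{\tilde\sigma^2}{c^2}h(ca/\tilde\sigma^2)$. Your version is slightly more complete in two places. You justify, via $\phi=\frac{\tilde\sigma^2}{2c^2}[s+s^{-1}-2]$, that the critical point is the global maximizer. You also prove the inequality $h(u)\geq u^2/[2(1+u)]$ through $h(u)=2u^2/(1+\sqrt{1+2u})^2$, whereas the paper only asserts it.
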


\begin{proof}
With the substitution $w \equiv 1 - ck \in (0,1]$ the bracket
in Eq.~\eqref{eq:subgamma_legendre} now reads
$c^{-1}(1-w)a - \tfrac{1}{2}c^{-2}\tilde\sigma^2(1-w)^2/w$, and setting its derivative
to zero gives, for $a > 0$,
\begin{align}
  w^\ast = \sqrt{\frac{\tilde\sigma^2}{\tilde\sigma^2 + 2ca}},
  \qquad
  k^\ast 
  = 
  \frac{1}{c}\left(1 - \sqrt{\frac{\tilde\sigma^2}{\tilde\sigma^2 + 2ca}}\right)> 0.
\end{align}
Substituting $w^\ast$ back yields
\begin{align}
  \psi^\ast(a)
  = \frac{\tilde\sigma^2 + 2ca}{2c^2}(1 - w^\ast)^2
  = \frac{1}{c^2}\left(\tilde\sigma^2 + ca - \sqrt{\tilde\sigma^2(\tilde\sigma^2 + 2ca)}\right)
  = \frac{\tilde\sigma^2}{c^2}h\left(\frac{ca}{\tilde\sigma^2}\right),
\end{align}
which gives Eq.~\eqref{eq:subgamma_legendre}. For the second statement, we use the elementary
inequality $h(u) = 1 + u - \sqrt{1+2u} \geq u^2/[2(1+u)]$ for $u \geq 0$.
Applying this bound with $u = ca/\tilde\sigma^2$ yields
\begin{align}
  \frac{\tilde\sigma^2}{c^2}h\left(\frac{ca}{\tilde\sigma^2}\right)
  \geq \frac{\tilde\sigma^2}{c^2}\frac{(ca/\tilde\sigma^2)^2}{2(1 + ca/\tilde\sigma^2)}
  = \frac{a^2}{2(\tilde\sigma^2 + ca)},
\end{align}
which is Eq.~\eqref{eq:subgamma_bernstein}.
\end{proof}

\subsection{Explicit bound for density observables}
\label{sec:explicit_density}
We begin with the density case. Recall from Proposition~\ref{prop:Lambda_variational} that the principal eigenvalue 
can be expressed in terms of the Dirichlet form, for $f\in D(\mathcal{E})$,
\begin{align}
  \Lambda^\rho(k)
  =
  \sup\left\{
    k\langle V, f^2\rangle_\mu - \mathcal{E}(f,f)
    \middle|
    \|f\|_{L^2(\mu)} = 1
  \right\}.
  \label{eq:dirichlet_density}
\end{align}
Therefore, bounding $\Lambda^\rho(k)$ amounts to controlling
the functional part 
$\langle V, f^2\rangle_\mu$ against the Dirichlet form $\mathcal{E}(f,f)$. 
The Poincar\'e inequality~\eqref{eq:poincare} provides exactly this control
and leads to the following lemma.

\begin{lemma}[Poincaré bound on $\Lambda^\rho$]
\label{lem:poincare_density}
Let Assumptions~\ref{ass:dynamics} hold, such that the symmetric part
$\mathcal{L}_S$ satisfies the Poincar\'e inequality~\eqref{eq:poincare} with
spectral gap $\lambda_\mathrm{gap}>0$ and let $\E^\mu[V]\equiv\int V\dd \mu$. Then for every $k$ with
$0\leq k < \lambda_{\mathrm{gap}}/\|V-\E^\mu[V]\|_{L^\infty(\mu)}$,
\begin{align}
  \Lambda^\rho(k)
  \leq
  k \E^\mu[V]
  + \frac{k^2 \mathrm{Var}_\mu(V)}{\lambda_\mathrm{gap} - k \|V-\E^\mu[V]\|_{L^\infty(\mu)}}.
  \label{eq:Lambda_rho_bound}
\end{align}
\end{lemma}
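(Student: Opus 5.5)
We start from the variational formula \eqref{eq:dirichlet_density} and first remove the mean. Write $\bar V\equiv V-\E^\mu[V]$ and $\beta\equiv\|\bar V\|_{L^\infty(\mu)}$. For $\|f\|_{L^2(\mu)}=1$ we have $k\langle V,f^2\rangle_\mu=k\E^\mu[V]+k\langle\bar V,f^2\rangle_\mu$. It therefore suffices to show that
\begin{align*}
\sup_{\|f\|_{L^2(\mu)}=1}\Bigl\{k\langle\bar V,f^2\rangle_\mu-\mathcal{E}(f,f)\Bigr\}\le\frac{k^2\Var_\mu(V)}{\lambda_{\mathrm{gap}}-k\beta}.
\end{align*}
Since $\mathcal{E}(|f|,|f|)\le\mathcal{E}(f,f)$, we may restrict to $f\ge0$, although this is not essential.

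Next we decompose an arbitrary unit-norm $f$ into its mean and fluctuation: $f=m+g$ with $m=\langle f,\mathbf 1\rangle_\mu$, $\langle g,\mathbf 1\rangle_\mu=0$, and $m^2+\|g\|^2=1$. Here $\|g\|^2=\Var_\mu(f)\le\lambda_{\mathrm{gap}}^{-1}\mathcal{E}(f,f)$ by the Poincar\'e inequality \eqref{eq:poincare}, and $\mathcal{E}(f,f)=\mathcal{E}(g,g)$ because constants are annihilated. Expanding $f^2=m^2+2mg+g^2$ and using $\langle\bar V,\mathbf 1\rangle_\mu=0$ gives
\begin{align*}
\langle\bar V,f^2\rangle_\mu=2m\langle\bar V,g\rangle_\mu+\langle\bar V,g^2\rangle_\mu\le 2|m|\,\|\bar V\|_{L^2(\mu)}\|g\|+\beta\|g\|^2,
\end{align*}
where we used Cauchy-Schwarz on the first term and the sup-norm on the second, with $\|\bar V\|_{L^2(\mu)}^2=\Var_\mu(V)$. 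Setting $x\equiv\|g\|\in[0,1]$ and bounding $|m|\le1$, the functional is at most
\begin{align*}
2k\sqrt{\Var_\mu(V)}\,x+k\beta x^2-\lambda_{\mathrm{gap}}x^2=2k\sqrt{\Var_\mu(V)}\,x-(\lambda_{\mathrm{gap}}-k\beta)x^2 .
\end{align*}
This is a concave quadratic in $x$ precisely in the admissible range $k<\lambda_{\mathrm{gap}}/\beta$, which is where that restriction enters. Maximizing over $x\in\mathbb{R}$ yields $k^2\Var_\mu(V)/(\lambda_{\mathrm{gap}}-k\beta)$, which is exactly the claimed bound. Adding back $k\E^\mu[V]$ gives \eqref{eq:Lambda_rho_bound}.

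The step to handle carefully is domain and regularity. The supremum in \eqref{eq:dirichlet_density} is taken over $D(\mathcal{L}_k^\rho)=D(\mathcal{L})$, while the Poincar\'e inequality holds on $D(\mathcal{E})$. The inclusion $D(\mathcal{L})\subset D(\mathcal{E})$ follows from $\mathcal{E}(f,f)=-\langle\mathcal{L}f,f\rangle_\mu$, so no closure argument is needed. Beyond this, the only delicate point is crude bookkeeping: we discard $|m|\le 1$ rather than using $m^2=1-x^2$. The parametrization $f=(1+\varepsilon g)/\sqrt{1+\varepsilon^2}$ mentioned in the introduction is an equivalent way to organize the same concave-quadratic optimization, and one could switch to it if a sharper constant were desired.
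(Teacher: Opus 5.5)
Your proof is correct and follows the paper's argument: your split $f=m+g$ is the unnormalized form of the paper's parametrization $f=(1+\varepsilon g)/\sqrt{1+\varepsilon^2}$ (with $m=(1+\varepsilon^2)^{-1/2}$ and $\|g\|=\varepsilon(1+\varepsilon^2)^{-1/2}$). It is followed by the same Poincar\'e, Cauchy--Schwarz and sup-norm estimates and the same concave-quadratic maximization, and your relaxation $|m|\le 1$ plays the role of the paper's dropping of the factor $1/(1+\varepsilon^2)\le 1$, with the minor advantage that it covers unit vectors with non-positive mean directly.
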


\begin{proof}
We start with the
Dirichlet representation of $\Lambda^\rho(k)$~\eqref{eq:dirichlet_density}.
Using $\|f\|_{L^2(\mu)}=1$,  $V$ can be re-centered, i.e., 
$\bar V \equiv V - \E^\mu[V]$ with $\E^\mu[\bar V] = 0$,
\begin{align}
  \langle V,f^2\rangle_\mu = \E^\mu[V] + \langle \bar V,f^2\rangle_\mu,
\end{align}
such that
$\Lambda^\rho(k) = k\E^\mu[V]
+ \sup_{\|f\|=1}\bigl[-\mathcal{E}(f,f)+k\langle\bar V,f^2\rangle_\mu\bigr]$.
To evaluate the remaining supremum explicitly, we parameterize $f$ in the form
\begin{align}
f=\frac{1 + \varepsilon g}{\sqrt{1+\varepsilon^2}},
\end{align}
for some $\varepsilon\geq 0$ and $g\in L^2(\mu)$ satisfying
$\|g\|_{L^2(\mu)}=1$ and $\E^\mu[g] = 0$. In particular, one computes directly that
\begin{align}
  \mathrm{Var}_\mu(f) = \frac{\varepsilon^2}{1+\varepsilon^2},
  \label{eq:var_param}
\end{align}
and, since $\E^\mu[\bar V] = 0$,
\begin{align}
  \langle \bar V, f^2\rangle_\mu = \frac{1}{1+\varepsilon^2}
  \left[2\varepsilon\langle \bar V,  g\rangle_\mu + \varepsilon^2\langle \bar V,g^2\rangle_\mu\right].
\end{align}
By the Poincar\'e inequality~\eqref{eq:poincare},
$\mathcal{E}(f,f) \geq \lambda_\mathrm{gap} \mathrm{Var}_\mu(f)$, hence
\begin{align}
\Lambda^\rho(k) - k\E^\mu[V]
&\leq
\sup_{\|f\|=1}\left[-\lambda_\mathrm{gap}\mathrm{Var}_\mu(f) + k\langle \bar V,f^2\rangle_\mu\right]
\nonumber
\\
&=
\sup_{\varepsilon\geq 0}\frac{1}{1+\varepsilon^2}
\left[2 k \varepsilon\langle \bar V, g\rangle_\mu
+ \varepsilon^2 \langle (k\bar V-\lambda_\mathrm{gap}), g^2\rangle_\mu\right].
\end{align}
Next, we bound the $\varepsilon^2$ term as
\begin{align}
\langle k\bar V - \lambda_\mathrm{gap}, g^2\rangle_\mu
\leq k\|\bar V\|_{L^\infty(\mu)} - \lambda_\mathrm{gap},
\label{eq:epsilon_sqr_bound}
\end{align}
where we use that pointwise $\bar V\leq \|\bar V\|_{L^\infty(\mu)}$ and $\|g\|_{L^2(\mu)}=1$.
Moreover, applying Cauchy-Schwarz to the cross term yields
\begin{align}
\langle \bar V,g\rangle_\mu
\leq \|\bar V\|_{L^2(\mu)}\|g\|_{L^2(\mu)} = \sqrt{\mathrm{Var}_\mu(V)},
\end{align}
using $\|\bar V\|_{L^2(\mu)}^2 = \mathrm{Var}_\mu(V)$ and $\|g\|_{L^2(\mu)}=1$.
Therefore,
\begin{align}
\Lambda^\rho(k) - k\E^\mu[V]
&\leq
\sup_{\varepsilon \geq 0}\frac{1}{1+\varepsilon^2}
\left[2 k \varepsilon \sqrt{\mathrm{Var}_\mu(V)}
+ \varepsilon^2 (k\|\bar V\|_{L^\infty(\mu)} - \lambda_\mathrm{gap})\right]
\nonumber
\\ 
&\leq
\sup_{\varepsilon \geq 0}
\left[2 k \varepsilon \sqrt{\mathrm{Var}_\mu(V)}
+ \varepsilon^2 (k\|\bar V\|_{L^\infty(\mu)} - \lambda_\mathrm{gap})\right],
\end{align}
where in the last line we drop the prefactor using $1/(1+\varepsilon^2)\leq 1$.
It remains to evaluate the supremum over $\varepsilon\geq 0$ of the quadratic polynomial
\begin{align}
P^\rho(\varepsilon) \equiv
\varepsilon^2 (k\|\bar V\|_{L^\infty(\mu)} - \lambda_\mathrm{gap})
+ 2 k \varepsilon \sqrt{\mathrm{Var}_\mu(V)}.
\end{align}
For $k\|\bar V\|_{L^\infty(\mu)} < \lambda_\mathrm{gap}$ it is concave and the supremum is
attained at
\begin{align}
  \varepsilon_0
  = \frac{k \sqrt{\mathrm{Var}_\mu(V)}}{\lambda_\mathrm{gap} - k \|\bar V\|_{L^\infty(\mu)}}.
  \label{eq:eps_star}
\end{align}
Finally, the desired bound follows from $P^\rho(\varepsilon_0)$ as
\begin{align}
  \Lambda^\rho(k)
  \leq
  k \E^\mu[V]
  + \frac{k^2\mathrm{Var}_\mu(V)}{\lambda_\mathrm{gap} - k \|\bar V\|_{L^\infty(\mu)}},
\end{align}
which completes the proof.
\end{proof}

\begin{theorem}[Concentration inequality for density observables]
\label{thm:explicit_density}
Let $(X_t)_{t\geq 0}$ be an ergodic diffusion with invariant measure $\mu$ whose
symmetric part $\mathcal{L}_S$ satisfies the Poincar\'e inequality~\eqref{eq:poincare}
with spectral gap $\lambda_\mathrm{gap}>0$ (by Assumptions~\ref{ass:dynamics}). 
Let $\overline{\rho}_t(V)$ denote the empirical density according to Definition~\ref{def:empirical_density}
with ergodic mean $\E^\mu[\overline{\rho}_t(V)]=\E^\mu[V]$,
and let $\nu\ll\mu$ with $\dd\nu/\dd\mu\in L^2(\mu)$.
Then for all $t > 0$ and $a > 0$,
\begin{align}
  \mathbb{P}^\nu\left(\overline{\rho}_t(V) - \E^\mu[\overline{\rho}_t] \geq a\right)
  \leq
  \left\|\frac{\dd\nu}{\dd\mu}\right\|_{L^2(\mu)}
  \exp\left[-t \frac{\tilde\sigma_\rho^2}{c_\rho^2} h\left(\frac{c_\rho a}{\tilde\sigma_\rho^2}\right)  \right],
  \label{eq:explicit_density}
\end{align}
where $h(u) \equiv 1 + u - \sqrt{1 + 2u}$ and we define 
\begin{align}
  \tilde\sigma_\rho^2 \equiv \frac{2\mathrm{Var}_\mu(V)}{\lambda_\mathrm{gap}},
  \qquad
  c_\rho \equiv \frac{\|V - \E^\mu[V]\|_{L^\infty(\mu)}}{\lambda_\mathrm{gap}}.
\end{align}
\end{theorem}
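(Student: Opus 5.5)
The plan is to combine the general concentration inequality of Theorem~\ref{thm:conc_density} with the explicit eigenvalue bound of Lemma~\ref{lem:poincare_density}, and then apply the sub-gamma Legendre transform of Lemma~\ref{lem:subgamma}.

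\textbf{Step 1: bring $\Lambda^\rho$ into sub-gamma form.} Write $\bar V=V-\E^\mu[V]$. Lemma~\ref{lem:poincare_density} gives, for $0\le k<\lambda_\mathrm{gap}/\|\bar V\|_{L^\infty(\mu)}=1/c_\rho$,
\begin{align*}
\Lambda^\rho(k)-k\E^\mu[V]\le \frac{k^2\mathrm{Var}_\mu(V)}{\lambda_\mathrm{gap}(1-c_\rho k)}=\frac{k^2\tilde\sigma_\rho^2}{2(1-c_\rho k)}=\phi(k).
\end{align*}
Here $\tilde\sigma_\rho^2=2\mathrm{Var}_\mu(V)/\lambda_\mathrm{gap}$, so the factor $2$ in the definition of $\tilde\sigma_\rho^2$ is exactly what matches the normalization of $\phi$ in Lemma~\ref{lem:subgamma}.

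\textbf{Step 2: Chernoff with the MGF bound.} I would not pass through the full Legendre transform $I^\rho$ over all $k\in\mathbb{R}$. Instead I go back to the intermediate inequality \eqref{eq:before_legendre} in the proof of Theorem~\ref{thm:conc_density}, which combines Chernoff with Corollary~\ref{cor:mgf_bound}. Restricting the infimum to $k\in(0,1/c_\rho)$ only weakens the bound, so this is legitimate. It gives
\begin{align*}
\mathbb{P}^\nu\bigl(\overline\rho_t-\E^\mu[V]\ge a\bigr)\le N_\nu\exp\Bigl(-t\sup_{0<k<1/c_\rho}\bigl[k(\E^\mu[V]+a)-\Lambda^\rho(k)\bigr]\Bigr).
\end{align*}
Substituting Step 1, the term $k\E^\mu[V]$ cancels, and the exponent is at least $\sup_{0\le k<1/c_\rho}[ka-\phi(k)]=\phi^\ast(a)$. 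Including $k=0$ is harmless because it contributes $0$ and the supremum is attained at some $k^\ast>0$. Lemma~\ref{lem:subgamma} then identifies $\phi^\ast(a)=\frac{\tilde\sigma_\rho^2}{c_\rho^2}h(c_\rho a/\tilde\sigma_\rho^2)$, which is \eqref{eq:explicit_density}.

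\textbf{Main obstacle: degenerate parameter cases.} The only real care needed is at the boundaries of the parameter range, where Lemma~\ref{lem:subgamma} (which assumes $c>0$ and $\tilde\sigma^2>0$) does not apply directly.
\begin{itemize}
\item If $c_\rho=0$, then $V$ is $\mu$-a.s.\ constant, $\mathrm{Var}_\mu(V)=0$, and the deviation event has probability zero.
\item If $\mathrm{Var}_\mu(V)=0$ with $c_\rho>0$, the same conclusion holds, since $\mathrm{Var}_\mu(V)=0$ forces $c_\rho=0$ anyway.
\end{itemize}
In both cases the bound holds trivially, with the right-hand side read in the limiting sense. I would also confirm that $k^\ast$ from Lemma~\ref{lem:subgamma} lies strictly inside $(0,1/c_\rho)$, which is where Lemma~\ref{lem:poincare_density} is valid. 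This holds because $w^\ast\in(0,1)$ for $a>0$.
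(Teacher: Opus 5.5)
Your proposal is correct and follows essentially the paper's route. You rewrite the Poincar\'e bound of Lemma~\ref{lem:poincare_density} in the sub-gamma form $\tilde\sigma_\rho^2k^2/[2(1-c_\rho k)]$, restrict the Chernoff/Cram\'er supremum to $[0,1/c_\rho)$, and evaluate it with Lemma~\ref{lem:subgamma}. Starting from the restricted Chernoff infimum rather than the full transform $I^\rho$, and noting the degenerate case $c_\rho=0$ (equivalently $\mathrm{Var}_\mu(V)=0$, which the paper leaves implicit), are harmless refinements.
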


\begin{proof}
By Theorem~\ref{thm:conc_density} together with the Cram\'er transform~\eqref{eq:cramer_trafo_general_density}, 
it suffices to bound
$I^\rho(\E^\mu[\overline{\rho}_t] + a) = \sup_{k>0}\bigl[k(\E^\mu[\overline{\rho}_t]+a) - \Lambda^\rho(k)\bigr]$
from below. The mean directly cancels against the linear term of the Poincar\'e
bound~\eqref{eq:Lambda_rho_bound} since $\E^\mu[\overline{\rho}_t] =\E^\mu[V]$.
With  $\tilde\sigma_\rho^2$ and $c_\rho$ from above, 
we obtain the characteristic sub-gamma form, for $0 \leq k < 1/c_\rho$,
\begin{align}
  \Lambda^\rho(k) - k\E^\mu[\overline{\rho}_t]
  \leq \frac{k^2\mathrm{Var}_\mu(V)}{\lambda_\mathrm{gap} - k\|V  - \E^\mu[ V] \|_{L^\infty(\mu)}}
  = \frac{\tilde\sigma_\rho^2 k^2}{2(1 - c_\rho k)},
\end{align}
such that 
\begin{align}
I^\rho(\E^\mu[V] + a) \geq \sup_{0 \leq k < 1/c_\rho}\left[ka - 
\frac{\tilde\sigma_\rho^2 k^2}{2(1 - c_\rho k)}\right]
=\frac{\tilde\sigma_\rho^2}{c_\rho^2}h\left(\frac{c_\rho a}{\tilde\sigma_\rho^2}\right),
\end{align}
where $h(u) \equiv 1 + u - \sqrt{1 + 2u}$ and the last equality follows from Lemma~\ref{lem:subgamma} with $(\tilde\sigma^2, c)$ replaced by $(\tilde\sigma_\rho^2, c_\rho)$.
Inserting the lower bound into the Cram\'er transform of
Theorem~\ref{thm:conc_density} completes the proof.
\end{proof}

\begin{corollary}[Bernstein bound for the empirical density]
\label{cor:bernstein_density}
Under the assumptions of Theorem~\ref{thm:explicit_density}, for all $t>0$ and
$a>0$, the following simpler bound holds
\begin{align}
  \mathbb{P}^\nu\left(\overline{\rho}_t(V) - \E^\mu[\overline{\rho}_t] \geq a\right)
  \leq
  \left\|\frac{\dd\nu}{\dd\mu}\right\|_{L^2(\mu)}
  \exp\left(-\frac{ta^2}{2\bigl(\tilde\sigma_\rho^2 + c_\rho a\bigr)}\right).
  \label{eq:bernstein_density}
\end{align}
\end{corollary}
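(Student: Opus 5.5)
The plan is to derive the Bernstein bound directly from the sub-gamma bound of Theorem~\ref{thm:explicit_density}, using the elementary lower bound on $h$ already recorded in Lemma~\ref{lem:subgamma}. No new spectral or functional-inequality input is needed. The entire content reduces to showing that the exponent in Eq.~\eqref{eq:explicit_density} dominates the Bernstein exponent.

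The key steps, in order, are as follows.

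First, I start from Eq.~\eqref{eq:explicit_density}, whose exponent is $-t\,\phi^\ast(a)$ with
\[
\phi^\ast(a)=\frac{\tilde\sigma_\rho^2}{c_\rho^2}\,h\!\left(\frac{c_\rho a}{\tilde\sigma_\rho^2}\right).
\]
By Lemma~\ref{lem:subgamma}, this is the Legendre transform of $\tilde\sigma_\rho^2k^2/[2(1-c_\rho k)]$.

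Second, I invoke the inequality~\eqref{eq:subgamma_bernstein}, namely $h(u)\ge u^2/[2(1+u)]$ for $u\ge0$, applied with $u=c_\rho a/\tilde\sigma_\rho^2\ge0$. This gives
\[
\phi^\ast(a)\ge \frac{a^2}{2(\tilde\sigma_\rho^2+c_\rho a)}.
\]

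Third, since $x\mapsto \e^{-tx}$ is decreasing for $t>0$, replacing the exponent by this smaller quantity can only enlarge the right-hand side. This yields Eq.~\eqref{eq:bernstein_density}, with the same prefactor $\|\dd\nu/\dd\mu\|_{L^2(\mu)}$.

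If one wants the elementary inequality verified rather than cited, I would set $s=\sqrt{1+2u}\ge1$. Then $h(u)=(s-1)^2/2$, and the claim becomes
\[
(s-1)^2(1+u)\ge u^2,
\quad\text{i.e.}\quad
(s-1)\sqrt{1+u}\ge u=\frac{(s-1)(s+1)}{2}.
\]
This reduces to $4(1+u)\ge (s+1)^2=2+2u+2s$, i.e. $1+u\ge s$. That holds because $(1+u)^2=1+2u+u^2\ge s^2$.

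I would also treat the degenerate cases. If $c_\rho=0$, which happens when $V$ is $\mu$-a.s. constant, Lemma~\ref{lem:subgamma} assumes $c>0$. In that case, however, $\Lambda^\rho(k)-k\E^\mu[V]\le \tilde\sigma_\rho^2k^2/2$ by Lemma~\ref{lem:poincare_density}, and the Legendre transform is exactly $a^2/(2\tilde\sigma_\rho^2)$, which is consistent with the claim. The case $\mathrm{Var}_\mu(V)=0$ is trivial, since then the deviation event has probability zero.

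The only point requiring care is this edge-case bookkeeping. There is no genuine obstacle, because the statement is a direct corollary of the preceding theorem combined with an inequality already proven in Lemma~\ref{lem:subgamma}.
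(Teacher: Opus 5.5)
Your proposal is correct and is essentially the paper's proof: you lower-bound the exponent of Theorem~\ref{thm:explicit_density} via $h(u)\ge u^2/[2(1+u)]$ from Lemma~\ref{lem:subgamma} with $u=c_\rho a/\tilde\sigma_\rho^2$, and your check of that inequality via $s=\sqrt{1+2u}$ is a correct detail the paper leaves as ``elementary.'' One small tidy-up: the cases $c_\rho=0$ and $\mathrm{Var}_\mu(V)=0$ coincide (both mean $V=\E^\mu[V]$ $\mu$-a.e.), so your expression $a^2/(2\tilde\sigma_\rho^2)$ there is really $+\infty$, which is consistent with the deviation event having probability zero.
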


\begin{proof}
By Theorem~\ref{thm:explicit_density} and Lemma~\ref{lem:subgamma},
the inequality~\eqref{eq:subgamma_bernstein} with
$(\tilde\sigma^2, c) = (\tilde\sigma_\rho^2, c_\rho)$ gives a lower bound on the exponent
of Eq.~\eqref{eq:explicit_density} by $a^2/[2(\tilde\sigma_\rho^2 + c_\rho a)]$, which directly
gives Eq.~\eqref{eq:bernstein_density}.
\end{proof}

\begin{remark}[Gaussian and exponential regimes]
\label{rem:subgamma_density}
The exponent in Eq.~\eqref{eq:bernstein_density} interpolates between two different 
regimes depending on which term in $2(\tilde\sigma_\rho^2+c_\rho a)$ dominates,
set by
$\tilde\sigma_\rho^2=2\mathrm{Var}_\mu(V)/\lambda_\mathrm{gap}$ and 
$c_\rho=\|V-\E^\mu[V]\|_{L^\infty(\mu)}/\lambda_\mathrm{gap}$,
\begin{align}
  \frac{a^2}{2(\tilde\sigma_\rho^2+c_\rho a)}
  \approx
  \begin{cases}
    \dfrac{a^2}{2\tilde\sigma_\rho^2}, & a\ll \tilde\sigma_\rho^2/c_\rho
      \quad\text{(Gaussian)},
      \\
    \dfrac{a}{2c_\rho}, & a\gg \tilde\sigma_\rho^2/c_\rho
      \quad\text{(exponential)}.
  \end{cases}
\end{align}
For small deviations the bound is therefore sub-Gaussian with variance 
$\tilde\sigma_\rho^2/t$, while for large deviations it decays only exponentially
at a rate of $t/(2c_\rho)$. The crossover between the two regimes can be evaluated explicitly, and occurs at
$a^\ast=\tilde\sigma_\rho^2/c_\rho=2\mathrm{Var}_\mu(V)/\|V-\E^\mu[V]\|_{L^\infty(\mu)}$.
Interestingly, 
it is independent of both $t$ and the spectral gap $\lambda_{\mathrm{gap}}$ and 
is fixed only by the observable $V$.
\end{remark}

\subsection{Explicit bounds for the empirical current}
\label{sec:poincare_current}
Next, we derive explicit concentration bounds for the empirical current
$\overline{J}_t(U)$ that will take a sub-gamma and a simpler classical Bernstein form. 
As before, we begin by recalling the variational characterization of the principal eigenvalue
in terms of the Dirichlet form (Proposition~\ref{prop:Lambda_variational}),  for $f\in D(\mathcal{E})$,
\begin{align}
  \Lambda^J(k)
  = \sup\left\{
      k\langle \vs\cdot U,f^2\rangle_\mu
      + k^2\langle U\cdot DU,f^2\rangle_\mu
      - \mathcal{E}(f,f)
      \middle| \|f\|_{L^2(\mu)} = 1
    \right\}.
\end{align}
Bounding $\Lambda^J(k)$ now requires to control the linear and newly arising quadratic term against
the Dirichlet form $\mathcal{E}(f,f)$. 
The linear contribution $k\langle\vs\cdot U,f^2\rangle_\mu$ can be identified as the analogue of the 
density part $k\langle V,f^2\rangle_\mu$,
which, however, now reflects the (possibly) non-equilibrium character of the
dynamics and vanishes under detailed balance ($\vs=0$).
Notably, the novel quadratic contribution $k^2\langle U\cdot DU,f^2\rangle_\mu$ has \emph{no}
counterpart as it originates from the stochastic integral defining the current.
In the following, we treat the linear part as before and derive two different concentration bounds, depending
on how we treat the quadratic term.
Throughout we use the following shorthand notation
\begin{align}
W(x) \equiv \vs(x) \cdot U(x),
\qquad
Q(x) \equiv U(x)\cdot DU(x)\geq 0,
\end{align}
such that $\E^\mu[\overline{J}_t] = \int \vs\cdot U \dd\mu = \E^\mu[W]$.

\subsubsection{Approach 1}
\label{sec:approach1}
The first, more direct approach bounds the quadratic term uniformly,
$\langle Q,f^2\rangle_\mu\leq\|Q\|_{L^\infty(\mu)}$, and includes in a second step the resulting
$k^2\|Q\|_{L^\infty(\mu)}$ term into the variance proxy of the sub-gamma form, which in turn allows the computation of 
its Legendre transform in a closed form.

\begin{lemma}[Poincar\'e bound on $\Lambda^J(k)$ (Approach 1)]
\label{lem:Lambda_J_approach1}
Let Assumptions~\ref{ass:dynamics} hold, such that the symmetric part
$\mathcal{L}_S$ satisfies the Poincar\'e inequality~\eqref{eq:poincare} with
spectral gap $\lambda_\mathrm{gap}>0$. Then for all 
$0 \leq k < \lambda_\mathrm{gap}/\|W-\E^\mu[W]\|_{L^\infty(\mu)}$,
\begin{align}
\Lambda^J(k)
&\leq
k\E^\mu[W]
+ \frac{k^2\mathrm{Var}_\mu(W)}{\lambda_\mathrm{gap} - k\|W - \E^\mu[W]\|_{L^\infty(\mu)}}
+ k^2\|Q\|_{L^\infty(\mu)}
\\
&\leq
k\E^\mu[W]
+ \frac{k^2 \bigl(\mathrm{Var}_\mu(W) + \lambda_\mathrm{gap}\|Q\|_{L^\infty(\mu)}\bigr)}{\lambda_\mathrm{gap} - k \|W - \E^\mu[W]\|_{L^\infty(\mu)}}.
\label{eq:Lambda_J_bound1}
\end{align}
\end{lemma}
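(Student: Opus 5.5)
The plan is to start from the variational formula of Proposition~\ref{prop:Lambda_variational} for $\Lambda^J(k)$, written with $W=\vs\cdot U$ and $Q=U\cdot DU$. For any $f$ with $\|f\|_{L^2(\mu)}=1$ we have $\langle Q,f^2\rangle_\mu\le\|Q\|_{L^\infty(\mu)}$, because $Q\ge0$ and $\int f^2\dd\mu=1$. Since $k^2\ge0$, this gives pointwise in $f$
\begin{align*}
k\langle W,f^2\rangle_\mu+k^2\langle Q,f^2\rangle_\mu-\mathcal{E}(f,f)
\le k\langle W,f^2\rangle_\mu-\mathcal{E}(f,f)+k^2\|Q\|_{L^\infty(\mu)}.
\end{align*}
Taking the supremum over the unit sphere then yields $\Lambda^J(k)\le\Lambda^\rho_W(k)+k^2\|Q\|_{L^\infty(\mu)}$. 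Here $\Lambda^\rho_W$ denotes the density-type dissipativity constant \eqref{eq:Lambda_rho_dirichlet} with $V$ replaced by $W$.

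Next I apply Lemma~\ref{lem:poincare_density} to the bounded function $V=W$ (the lemma only uses boundedness of $V$ and the Poincar\'e inequality for $\mathcal{L}_S$). This gives, for $0\le k<\lambda_\mathrm{gap}/\|W-\E^\mu[W]\|_{L^\infty(\mu)}$,
\begin{align*}
\Lambda^\rho_W(k)\le k\E^\mu[W]+\frac{k^2\Var_\mu(W)}{\lambda_\mathrm{gap}-k\|W-\E^\mu[W]\|_{L^\infty(\mu)}},
\end{align*}
and hence the first inequality of the lemma. The one point to check is that $W$ is bounded: $U$ is bounded, but boundedness of $\vs$ is implicitly assumed. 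I would state this assumption explicitly, noting that the bound is void if $\|W-\E^\mu[W]\|_{L^\infty(\mu)}=\infty$.

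For the second inequality, I set $c\equiv\|W-\E^\mu[W]\|_{L^\infty(\mu)}\ge0$. In the admissible range we have $0<\lambda_\mathrm{gap}-kc\le\lambda_\mathrm{gap}$. Therefore
\begin{align*}
k^2\|Q\|_{L^\infty(\mu)}=\frac{k^2\lambda_\mathrm{gap}\|Q\|_{L^\infty(\mu)}}{\lambda_\mathrm{gap}}\le\frac{k^2\lambda_\mathrm{gap}\|Q\|_{L^\infty(\mu)}}{\lambda_\mathrm{gap}-kc}.
\end{align*}
Combining the two fractions over the common denominator gives \eqref{eq:Lambda_J_bound1}. When $c=0$ the restriction on $k$ is vacuous and the step holds with equality.

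None of these steps is a real obstacle. The only delicate point is the domain question: the supremum in Proposition~\ref{prop:Lambda_variational} runs over $D(\mathcal{L}_k^J)$, whereas Lemma~\ref{lem:poincare_density} is phrased over $D(\mathcal{L}_k^\rho)$ or $D(\mathcal{E})$. I would handle this by noting that both suprema can be taken over the common core $\mathcal{C}$ of Assumption~\ref{ass:dynamics}. Enlarging the class to $D(\mathcal{E})$ only increases the supremum, so the upper bound survives.
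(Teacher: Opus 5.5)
Your proposal is correct and follows essentially the same route as the paper. The paper also first bounds $\langle Q,f^2\rangle_\mu\le\|Q\|_{L^\infty(\mu)}$, then controls $k\langle W,f^2\rangle_\mu-\mathcal{E}(f,f)$ by the $\varepsilon$-parametrization argument of Lemma~\ref{lem:poincare_density}, which it re-derives inline with $V$ replaced by $W$ rather than citing it. Finally it absorbs $k^2\|Q\|_{L^\infty(\mu)}$ using $\lambda_\mathrm{gap}/(\lambda_\mathrm{gap}-k\|\bar W\|_{L^\infty(\mu)})\ge1$, exactly as you do.
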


\begin{proof}
The Dirichlet representation of $\Lambda^J(k)$ reads, for $f\in D(\mathcal{E})$,
\begin{align}
  \Lambda^J(k)
  = \sup\left\{
      k\langle W,f^2\rangle_\mu
      + k^2\langle Q,f^2\rangle_\mu
      - \mathcal{E}(f,f)
      \middle| \|f\|_{L^2(\mu)} = 1
    \right\}.
    \label{eq:rq_J_proof}
\end{align}
The quadratic term is bounded directly by $\langle Q, f^2\rangle_\mu \leq
\|Q\|_{L^\infty(\mu)}$ since $\|f\|_{L^2(\mu)} = 1$. For the remaining two terms we
center the non-equilibrium term by introducing $\bar W \equiv W - \E^\mu[W]$ with
$\E^\mu[\bar W] = 0$. Therefore we find
$\langle W,f^2\rangle_\mu = \E^\mu[W] + \langle\bar W,f^2\rangle_\mu$, and use
the parametrization $f = (1 + \varepsilon g)/\sqrt{1+\varepsilon^2}$ with
$\varepsilon \geq 0$, $\E^\mu[g] = 0$, $\|g\|_{L^2(\mu)} = 1$, to obtain
\begin{align}
k\langle W, f^2\rangle_\mu
= k\E^\mu[W]
+ \frac{1}{1+\varepsilon^2}\left(2\varepsilon k\langle \bar W, g\rangle_\mu
  + \varepsilon^2 k\langle \bar W, g^2\rangle_\mu\right).
\end{align}
The centered Cauchy-Schwarz bound gives $\langle \bar W, g\rangle_\mu \leq
\|\bar W\|_{L^2(\mu)}\|g\|_{L^2(\mu)} = \sqrt{\mathrm{Var}_\mu(W)}$, while
$\langle \bar W, g^2\rangle_\mu \leq \|\bar W\|_{L^\infty(\mu)}$. Using the Poincar\'e
inequality $\mathcal{E}(f,f) \geq \lambda_\mathrm{gap}\Var_\mu(f)$
with $\Var_\mu(f)=\varepsilon^2/(1+\varepsilon^2)$, and
dropping the prefactor $1/(1+\varepsilon^2) \leq 1$, yields
\begin{align}
-\mathcal{E}(f,f) + k\langle W, f^2\rangle_\mu
\leq k\E^\mu[W]
+ \sup_{\varepsilon \geq 0}\left[
  2\varepsilon k\sqrt{\mathrm{Var}_\mu(W)}
  - \varepsilon^2\left(\lambda_\mathrm{gap} - k\|\bar W\|_{L^\infty(\mu)}\right)\right].
\end{align}
For $k\|\bar W\|_{L^\infty(\mu)} < \lambda_\mathrm{gap}$ 
the supremum over $\varepsilon$ is that of a concave quadratic polynomial 
\begin{align}
P^{J,1}(\varepsilon) 
= 
2\varepsilon k\sqrt{\mathrm{Var}_\mu(W)}
- \varepsilon^2\left(\lambda_\mathrm{gap} 
- k\|\bar W\|_{L^\infty(\mu)}\right),
\end{align}
and is maximized at
$\varepsilon_0 = k\sqrt{\mathrm{Var}_\mu(W)}/(\lambda_\mathrm{gap} - k\|\bar W\|_{L^\infty(\mu)})$.
Therefore, 
\begin{align}
-\mathcal{E}(f,f) + k\langle W, f^2\rangle_\mu
\leq k\E^\mu[W]
+ \frac{k^2\mathrm{Var}_\mu(W)}{\lambda_\mathrm{gap} - k\|\bar W\|_{L^\infty(\mu)}}.
\end{align}
Adding back the term $k^2\|Q\|_{L^\infty(\mu)}$ proves the first inequality
\begin{align}
\Lambda^J(k)
\leq
k\E^\mu[W]
+ \frac{k^2\mathrm{Var}_\mu(W)}{\lambda_\mathrm{gap} - k\|\bar W\|_{L^\infty(\mu)}}
+ k^2\|Q\|_{L^\infty(\mu)}.
\end{align}
Moreover, since $\lambda_\mathrm{gap}/(\lambda_\mathrm{gap} - k\|\bar W\|_{L^\infty(\mu)}) \geq 1$ for
the admissible range of $k$, the last term is bounded by
\begin{align}
k^2\|Q\|_{L^\infty(\mu)}
\leq
\frac{k^2\lambda_\mathrm{gap}\|Q\|_{L^\infty(\mu)}}
     {\lambda_\mathrm{gap} - k\|\bar W\|_{L^\infty(\mu)}}.
\end{align}
Combining the two expressions gives
\begin{align}
\Lambda^J(k)
\leq
k\E^\mu[W]
+ \frac{k^2\bigl(\mathrm{Var}_\mu(W) + \lambda_\mathrm{gap}\|Q\|_{L^\infty(\mu)}\bigr)}
       {\lambda_\mathrm{gap} - k\|\bar W\|_{L^\infty(\mu)}},
\end{align}
and completes the proof of the second inequality.
\end{proof}

\begin{theorem}[Explicit concentration bound for the empirical current (Approach 1)]
\label{thm:explicit_current_1}
Let $(X_t)_{t\geq 0}$ be an ergodic diffusion with invariant measure $\mu$ whose
symmetric part $\mathcal{L}_S$ satisfies the Poincar\'e inequality~\eqref{eq:poincare}
with spectral gap $\lambda_\mathrm{gap}>0$ (Assumptions~\ref{ass:dynamics}). 
Let $\overline{J}_t(U)$ denote the empirical current according to Definition~\ref{def:empirical_current}
with ergodic mean $\E^\mu[\overline{J}_t(U)]=\E^\mu[\vs\cdot U]= \E^\mu[W]$,
and let $\nu\ll\mu$ with $\dd\nu/\dd\mu\in L^2(\mu)$.
Then for all $t > 0$ and $a > 0$,
\begin{align}
  \mathbb{P}^\nu\left(\overline{J}_t(U) - \E^\mu[\overline{J}_t] \geq a\right)
  \leq
  \left\|\frac{\dd\nu}{\dd\mu}\right\|_{L^2(\mu)}
  \exp\left[-t\frac{\tilde\sigma_{J,1}^2}{c_{J,1}^2}h\left(\frac{c_{J,1} a}{\tilde\sigma_{J,1}^2}\right)\right],
  \label{eq:explicit_current_1}
\end{align}
where $h(u) \equiv 1 + u - \sqrt{1+2u}$ and we define
\begin{align}
  \tilde\sigma_{J,1}^2 \equiv
  \frac{2\mathrm{Var}_\mu(W)}{\lambda_\mathrm{gap}} + 2\|Q\|_{L^\infty(\mu)},
  \qquad 
  c_{J,1} \equiv \frac{\|W-\E^\mu[W]\|_{L^\infty(\mu)}}{\lambda_\mathrm{gap}}.
\end{align}
\end{theorem}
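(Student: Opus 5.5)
The proof follows the density case (Theorem~\ref{thm:explicit_density}) almost verbatim, with Lemma~\ref{lem:Lambda_J_approach1} in place of Lemma~\ref{lem:poincare_density}. By Theorem~\ref{thm:conc_current} it suffices to bound the Cram\'er transform $I^J(\E^\mu[\overline{J}_t]+a)=\sup_{k>0}[k(\E^\mu[W]+a)-\Lambda^J(k)]$ from below. The supremum was already shown to be attained at $k^\ast>0$, so restricting to $0\le k<\lambda_\mathrm{gap}/\|W-\E^\mu[W]\|_{L^\infty(\mu)}=1/c_{J,1}$ only lowers it. It therefore gives a valid lower bound.

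On this range I insert the second inequality~\eqref{eq:Lambda_J_bound1} of Lemma~\ref{lem:Lambda_J_approach1}. The linear term $k\E^\mu[W]$ cancels exactly against $k\E^\mu[\overline{J}_t]$, because $\E^\mu[\overline{J}_t]=\E^\mu[W]$ by Proposition~\ref{prop:mean}. Dividing numerator and denominator of the remaining term by $\lambda_\mathrm{gap}$ rewrites it in sub-gamma form:
\begin{align*}
\frac{k^2\bigl(\mathrm{Var}_\mu(W)+\lambda_\mathrm{gap}\|Q\|_{L^\infty(\mu)}\bigr)}{\lambda_\mathrm{gap}-k\|W-\E^\mu[W]\|_{L^\infty(\mu)}}
=\frac{\tilde\sigma_{J,1}^2k^2}{2(1-c_{J,1}k)},
\end{align*}
with $\tilde\sigma_{J,1}^2=2\mathrm{Var}_\mu(W)/\lambda_\mathrm{gap}+2\|Q\|_{L^\infty(\mu)}$ and $c_{J,1}$ as stated. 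Hence
\begin{align*}
I^J(\E^\mu[W]+a)\ge\sup_{0\le k<1/c_{J,1}}\Bigl[ka-\frac{\tilde\sigma_{J,1}^2k^2}{2(1-c_{J,1}k)}\Bigr].
\end{align*}
Lemma~\ref{lem:subgamma} then evaluates this supremum as $\frac{\tilde\sigma_{J,1}^2}{c_{J,1}^2}h\bigl(c_{J,1}a/\tilde\sigma_{J,1}^2\bigr)$. Substituting this into Theorem~\ref{thm:conc_current} gives~\eqref{eq:explicit_current_1}.

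The only delicate step is the degenerate case $c_{J,1}=0$, i.e.\ $W$ constant $\mu$-a.s., which includes detailed balance. Lemma~\ref{lem:subgamma} assumes $c>0$, so it does not apply directly. In this case I would argue directly: the admissible range is all $k\ge0$, and the bound reduces to $\Lambda^J(k)-k\E^\mu[W]\le\tilde\sigma_{J,1}^2k^2/2$. Its Legendre transform is $a^2/(2\tilde\sigma_{J,1}^2)$, which is exactly the limit $c\to0$ of $\frac{\tilde\sigma^2}{c^2}h(ca/\tilde\sigma^2)$, since $h(u)\sim u^2/2$ as $u\to0$. With this continuous-extension convention the formula holds as stated, and it reproduces Corollary~\ref{cor:db_subgaussian}. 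Note that $\tilde\sigma_{J,1}^2>0$ whenever $U\not\equiv0$, because $D$ is positive definite, so the division by $\tilde\sigma_{J,1}^2$ is legitimate.
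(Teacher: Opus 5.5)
Your proof is correct and follows the paper's argument step for step: restrict the supremum to $0\le k<1/c_{J,1}$, insert the second inequality of Lemma~\ref{lem:Lambda_J_approach1}, cancel the mean via Proposition~\ref{prop:mean}, rewrite the remainder in sub-gamma form, and evaluate it with Lemma~\ref{lem:subgamma}. Your handling of the degenerate case $c_{J,1}=0$ via the limit $h(u)\sim u^2/2$ is a worthwhile addition, since the paper's proof leaves that case implicit and treats it separately in Corollary~\ref{cor:db_subgaussian} and Remark~\ref{rem:db_bounds}.
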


\begin{proof}
By Theorem~\ref{thm:conc_current} together with the Cram\'er transform~\eqref{eq:cramer_curr_def},
it suffices to bound
$I^J(\E^\mu[\overline{J}_t] + a) = \sup_{k>0}\bigl[k(\E^\mu[\overline{J}_t]+a) - \Lambda^J(k)\bigr]$
from below. The mean directly cancels against the linear term of the Poincar\'e
bound~\eqref{eq:Lambda_J_bound1} since $\E^\mu[\overline{J}_t]=\E^\mu[W]$.
With parameters $\tilde\sigma_{J,1}^2$ and $c_{J,1}$ from above,
we obtain the sub-gamma form, for $0 \leq k < 1/c_{J,1}$,
\begin{align}
  \Lambda^J(k) - k\E^\mu[\overline{J}_t]
  \leq \frac{k^2\bigl(\mathrm{Var}_\mu(W) + \lambda_\mathrm{gap}\|Q\|_{L^\infty(\mu)}\bigr)}{\lambda_\mathrm{gap} 
  - k\|W-\E^\mu[W]\|_{L^\infty(\mu)}}
  = \frac{\tilde\sigma_{J,1}^2 k^2}{2(1 - c_{J,1} k)},
\end{align}
so that
\begin{align}
I^J(\E^\mu[\overline{J}_t] + a) \geq \sup_{0 \leq k < 1/c_{J,1}}\left[ka -
\frac{\tilde\sigma_{J,1}^2 k^2}{2(1 - c_{J,1} k)}\right]
= \frac{\tilde\sigma_{J,1}^2}{c_{J,1}^2}h\left(\frac{c_{J,1} a}{\tilde\sigma_{J,1}^2}\right),
\end{align}
where $h(u) \equiv 1 + u - \sqrt{1 + 2u}$ and the last equality follows from
Lemma~\ref{lem:subgamma} with $(\tilde\sigma^2, c)$ replaced with
$(\tilde\sigma_{J,1}^2, c_{J,1})$.
Inserting the lower bound into the Cram\'er transform of
Theorem~\ref{thm:conc_current} completes the proof.
\end{proof}

\begin{corollary}[Bernstein bound for the empirical current]
\label{cor:bernstein_current_1}
Under the assumptions of Theorem~\ref{thm:explicit_current_1}, for all $t>0$ and
$a>0$,
\begin{align}
  \mathbb{P}^\nu\left(\overline{J}_t(U) - \E^\mu[\overline{J}_t] \geq a\right)
  \leq
  \left\|\frac{\dd\nu}{\dd\mu}\right\|_{L^2(\mu)}
  \exp\left(-\frac{ta^2}{2\bigl(\tilde\sigma_{J,1}^2 + c_{J,1}a\bigr)}\right).
  \label{eq:bernstein_current_1}
\end{align}
\end{corollary}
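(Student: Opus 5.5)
The plan follows the density case (Corollary~\ref{cor:bernstein_density}) verbatim, now starting from the sub-gamma bound of Theorem~\ref{thm:explicit_current_1}. By that theorem, for all $t>0$ and $a>0$ the tail probability is bounded by $\|\dd\nu/\dd\mu\|_{L^2(\mu)}\exp[-t\,\phi^\ast(a)]$, where
\begin{align}
  \phi^\ast(a)=\frac{\tilde\sigma_{J,1}^2}{c_{J,1}^2}h\left(\frac{c_{J,1}a}{\tilde\sigma_{J,1}^2}\right)
\end{align}
is the Legendre transform of $\phi(k)=\tilde\sigma_{J,1}^2k^2/[2(1-c_{J,1}k)]$ from Lemma~\ref{lem:subgamma}. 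Since $x\mapsto \e^{-tx}$ is decreasing, any lower bound on $\phi^\ast(a)$ gives a valid upper bound on the probability.

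The key step is therefore to apply inequality~\eqref{eq:subgamma_bernstein} of Lemma~\ref{lem:subgamma} with $(\tilde\sigma^2,c)=(\tilde\sigma_{J,1}^2,c_{J,1})$. It yields
\begin{align}
  \phi^\ast(a)\geq \frac{a^2}{2(\tilde\sigma_{J,1}^2+c_{J,1}a)}.
\end{align}
This relies on the elementary inequality $h(u)\geq u^2/[2(1+u)]$ for $u\geq0$, which can be checked by writing $\sqrt{1+2u}\leq 1+u-u^2/[2(1+u)]$ and squaring, since the right-hand side is nonnegative. Substituting this lower bound into the exponent gives Eq.~\eqref{eq:bernstein_current_1}.

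The only point needing care is the degenerate case $c_{J,1}=0$, which occurs for instance under detailed balance, where $W\equiv0$. Lemma~\ref{lem:subgamma} was stated only for $c>0$, so this case must be handled separately. For $c_{J,1}=0$, the first inequality of Lemma~\ref{lem:Lambda_J_approach1} holds for all $k\geq0$ and gives $\Lambda^J(k)-k\E^\mu[W]\leq \tilde\sigma_{J,1}^2k^2/2$. Its Legendre transform is exactly $a^2/(2\tilde\sigma_{J,1}^2)$, which is the claimed exponent at $c_{J,1}=0$. Alternatively, one can take the limit $c\to0^+$, using monotonicity of the bound in $c$. I expect no real obstacle beyond this case distinction.
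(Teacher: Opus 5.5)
Your proposal is correct and is exactly the paper's argument: lower-bound the exponent of Theorem~\ref{thm:explicit_current_1} using inequality~\eqref{eq:subgamma_bernstein} of Lemma~\ref{lem:subgamma} with $(\tilde\sigma^2,c)=(\tilde\sigma_{J,1}^2,c_{J,1})$. Two parts of your write-up add care beyond the paper: your explicit check of $h(u)\geq u^2/[2(1+u)]$ (after squaring, the difference is exactly $u^4\geq 0$), and your separate treatment of $c_{J,1}=0$, where Lemma~\ref{lem:subgamma} does not literally apply and the paper handles the case only implicitly in Remark~\ref{rem:db_bounds}.
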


\begin{proof}
By Theorem~\ref{thm:explicit_current_1} and Lemma~\ref{lem:subgamma}, the
inequality~\eqref{eq:subgamma_bernstein} with the substitution
$(\tilde\sigma^2, c) = (\tilde\sigma_{J,1}^2, c_{J,1})$ gives a lower bound on the exponent
of Eq.~\eqref{eq:explicit_current_1} by $a^2/[2(\tilde\sigma_{J,1}^2 + c_{J,1} a)]$,
which gives Eq.~\eqref{eq:bernstein_current_1}.
\end{proof}

\subsubsection{Approach 2}
\label{sec:approach2}
In the first approach we bound the quadratic term uniformly,
$\langle Q, f^2\rangle_\mu \leq \|Q\|_{L^\infty(\mu)}$, 
which is tight only when $Q = U\cdot DU$ is essentially constant.
In the second approach we now center the quadratic term,
$\langle Q, f^2\rangle_\mu = \E^\mu[Q]
+ \langle Q - \E^\mu[Q], f^2\rangle_\mu$, and therefore extract the mean
$\E^\mu[Q]$
and control 
$Q - \E^\mu[Q]$ through the Poincar\'e inequality exactly as for the
linear term of $W$. As before this yields a sub-gamma bound on $\Lambda^J(k)$, however, now with
a variance proxy that includes the mean $\E^\mu[Q]$ in place of $\|Q\|_{L^\infty(\mu)}$. 

\begin{lemma}[Poincar\'e bound on $\Lambda^J$ (Approach 2)]
\label{lem:Lambda_J_approach2}
Let Assumptions~\ref{ass:dynamics} hold, such that the symmetric part
$\mathcal{L}_S$ satisfies the Poincar\'e inequality~\eqref{eq:poincare} with
spectral gap $\lambda_\mathrm{gap}>0$. 
Then for all $k\geq0$ with
$k \|W-\E^\mu[W]\|_{L^\infty(\mu)}
 + k^2\|Q-\E^\mu[Q]\|_{L^\infty(\mu)} < \lambda_\mathrm{gap}$,
\begin{align}
\Lambda^J(k)
\leq
k \E^\mu[W]
+ k^2 \E^\mu[Q]
+
\frac{k^2 \bigl(\sqrt{\mathrm{Var}_\mu(W)} 
+ k\sqrt{\mathrm{Var}_\mu(Q)}\bigr)^2}
{\lambda_\mathrm{gap} - k\|W - \E^\mu[W]\|_{L^\infty(\mu)} - k^2\|Q - \E^\mu[Q]\|_{L^\infty(\mu)}}.
\label{eq:Lambda_J_bound2}
\end{align}
\end{lemma}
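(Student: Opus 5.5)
The plan is to follow the proof of Lemma~\ref{lem:Lambda_J_approach1} almost verbatim. The only change is that the quadratic term is treated like the linear one, so both are absorbed into a single combined centered potential. Start from the Dirichlet representation~\eqref{eq:rq_J_proof}. Write $\bar W\equiv W-\E^\mu[W]$ and $\bar Q\equiv Q-\E^\mu[Q]$. Then $\|f\|_{L^2(\mu)}=1$ gives
\begin{align}
k\langle W,f^2\rangle_\mu+k^2\langle Q,f^2\rangle_\mu
= k\E^\mu[W]+k^2\E^\mu[Q]+\langle \bar R_k,f^2\rangle_\mu,
\qquad \bar R_k\equiv k\bar W+k^2\bar Q ,
\end{align}
where $\E^\mu[\bar R_k]=0$. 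The two constants $k\E^\mu[W]+k^2\E^\mu[Q]$ pull out of the supremum. It then remains to bound $\sup_{\|f\|=1}\bigl[\langle\bar R_k,f^2\rangle_\mu-\mathcal{E}(f,f)\bigr]$. This is exactly the density-type problem of Lemma~\ref{lem:poincare_density}, with $k\bar V$ replaced by the centered potential $\bar R_k$.

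Next, parametrize $f=(1+\varepsilon g)/\sqrt{1+\varepsilon^2}$ with $\varepsilon\ge0$, $\E^\mu[g]=0$ and $\|g\|_{L^2(\mu)}=1$. The Poincar\'e inequality then gives $\mathcal{E}(f,f)\ge\lambda_{\mathrm{gap}}\varepsilon^2/(1+\varepsilon^2)$, and the expansion reads
\begin{align}
\langle\bar R_k,f^2\rangle_\mu-\mathcal{E}(f,f)\le\frac{1}{1+\varepsilon^2}\Bigl[2\varepsilon\langle\bar R_k,g\rangle_\mu+\varepsilon^2\bigl(\langle\bar R_k,g^2\rangle_\mu-\lambda_{\mathrm{gap}}\bigr)\Bigr].
\end{align}
The cross term is estimated by Cauchy--Schwarz and the triangle inequality in $L^2(\mu)$:
\begin{align}
\langle\bar R_k,g\rangle_\mu\le\|k\bar W+k^2\bar Q\|_{L^2(\mu)}\le k\sqrt{\Var_\mu(W)}+k^2\sqrt{\Var_\mu(Q)}=k\bigl(\sqrt{\Var_\mu(W)}+k\sqrt{\Var_\mu(Q)}\bigr).
\end{align}
For the $\varepsilon^2$ term, use the pointwise bound $\bar R_k\le k\|\bar W\|_{L^\infty(\mu)}+k^2\|\bar Q\|_{L^\infty(\mu)}$, valid for $k\ge0$, together with $\|g\|_{L^2(\mu)}=1$.

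Then drop the factor $1/(1+\varepsilon^2)\le1$. This step is legitimate only once the bracket is known to be nonnegative at the optimizer. That holds because we may take the sign of $g$ so that the cross term is nonnegative, exactly as in the earlier lemmas. We are left with the concave quadratic
\begin{align}
2\varepsilon\, k\bigl(\sqrt{\Var_\mu(W)}+k\sqrt{\Var_\mu(Q)}\bigr)-\varepsilon^2\bigl(\lambda_{\mathrm{gap}}-k\|\bar W\|_{L^\infty(\mu)}-k^2\|\bar Q\|_{L^\infty(\mu)}\bigr).
\end{align}
It is concave precisely under the stated admissibility condition on $k$. Its maximum over $\varepsilon\ge0$ is $B^2/A$, with $B=k(\sqrt{\Var_\mu(W)}+k\sqrt{\Var_\mu(Q)})$ and $A$ the denominator. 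Adding back the pulled-out constants gives~\eqref{eq:Lambda_J_bound2}.

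I expect no step to be a serious obstacle; the argument is routine given Lemma~\ref{lem:poincare_density}. The only points needing care are the following. The triangle-inequality step merges the two Cauchy--Schwarz contributions into the perfect square $(\sqrt{\Var_\mu(W)}+k\sqrt{\Var_\mu(Q)})^2$. The sup-norm bound on $\bar R_k$ uses $k\ge0$, so that $k^2\bar Q\le k^2\|\bar Q\|_{L^\infty(\mu)}$ and $k\bar W\le k\|\bar W\|_{L^\infty(\mu)}$ hold simultaneously. Finally, the admissible range of $k$ must be exactly the one that keeps the quadratic concave.
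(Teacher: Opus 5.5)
Your proposal is correct and matches the paper's proof step for step. The paper centers $W$ and $Q$, uses the parametrization $f=(1+\varepsilon g)/\sqrt{1+\varepsilon^2}$, applies Cauchy--Schwarz to the cross terms, bounds the $\varepsilon^2$ terms in sup-norm, applies Poincar\'e, and maximizes the concave quadratic in $\varepsilon$; your merging into $\bar R_k$ with the triangle inequality yields exactly the paper's factor $\sqrt{\Var_\mu(W)}+k\sqrt{\Var_\mu(Q)}$. One small correction: dropping $1/(1+\varepsilon^2)$ is justified because the concave quadratic vanishes at $\varepsilon=0$, so its supremum over $\varepsilon\geq0$ is nonnegative, and not by choosing the sign of $g$.
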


\begin{proof}
We start with the Dirichlet representation, $f\in D(\mathcal{E})$,
\begin{align}
  \Lambda^J(k)
  = \sup\left\{
      k\langle W,f^2\rangle_\mu
      + k^2\langle Q,f^2\rangle_\mu
      - \mathcal{E}(f,f)
      \middle| \|f\|_{L^2(\mu)} = 1
    \right\}.
\end{align}
We now center both contributions, $\bar W = W - \E^\mu[W]$ and
$\bar Q = Q - \E^\mu[Q]$, and use the parametrization
$f = (1+\varepsilon g)/\sqrt{1+\varepsilon^2}$ with $\varepsilon \geq 0$,
$\E^\mu[g] = 0$, $\|g\|_{L^2(\mu)} = 1$, yielding
\begin{align}
k\langle W,f^2\rangle_\mu
&= k\E^\mu[W]
+ \frac{1}{1+\varepsilon^2}\left[
  2\varepsilon k\langle \bar W,g\rangle_\mu
  + \varepsilon^2 k\langle \bar W,g^2\rangle_\mu
\right],
\\
k^2\langle Q,f^2\rangle_\mu
&= k^2\E^\mu[Q]
+ \frac{1}{1+\varepsilon^2}\left[
  2\varepsilon k^2\langle \bar Q,g\rangle_\mu
  + \varepsilon^2 k^2\langle \bar Q,g^2\rangle_\mu
\right].
\end{align}
The Cauchy-Schwarz inequality gives
$\langle \bar W,g\rangle_\mu \leq \sqrt{\mathrm{Var}_\mu(W)}$ and
$\langle \bar Q,g\rangle_\mu \leq \sqrt{\mathrm{Var}_\mu(Q)}$, while
$\langle \bar W,g^2\rangle_\mu \leq \|\bar W\|_{L^\infty(\mu)}$ and
$\langle \bar Q,g^2\rangle_\mu\leq \|\bar Q\|_{L^\infty(\mu)}$. 
Using the Poincar\'e
inequality $\mathcal{E}(f,f) \geq \lambda_\mathrm{gap}\Var_\mu(f)$
with $\Var_\mu(f)=\varepsilon^2/(1+\varepsilon^2)$, and subsequently
dropping the prefactor $1/(1+\varepsilon^2)\leq 1$, yields
\begin{align}
\Lambda^J(k)
\leq
k\E^\mu[W] + k^2\E^\mu[Q]
+ \sup_{\varepsilon \geq 0} P^{J,2}(\varepsilon),
\end{align}
where we introduce the quadratic polynomial
\begin{align}
P^{J,2}(\varepsilon)\equiv
2\varepsilon\left(k\sqrt{\mathrm{Var}_\mu(W)} + k^2\sqrt{\mathrm{Var}_\mu(Q)}\right)
- \varepsilon^2\left(\lambda_\mathrm{gap} - k\|\bar W\|_{L^\infty(\mu)}
  - k^2\|\bar Q\|_{L^\infty(\mu)}\right).
\end{align}
For $k\|\bar W\|_{L^\infty(\mu)} + k^2\|\bar Q\|_{L^\infty(\mu)} < \lambda_\mathrm{gap}$ the
polynomial is concave and attains its maximum at
\begin{align}
\varepsilon_{0}
= 
\frac{k\sqrt{\mathrm{Var}_\mu(W)} + k^2\sqrt{\mathrm{Var}_\mu(Q)}}
       {\lambda_\mathrm{gap} - k\|\bar W\|_{L^\infty(\mu)} - k^2\|\bar Q\|_{L^\infty(\mu)}},
\end{align}
with a value of
\begin{align}
P^{J,2}(\varepsilon_0)
= \frac{k^2\left(\sqrt{\mathrm{Var}_\mu(W)} 
+ k\sqrt{\mathrm{Var}_\mu(Q)}\right)^2}{\lambda_\mathrm{gap} - k\|\bar W\|_{L^\infty(\mu)} - k^2\|\bar Q\|_{L^\infty(\mu)}}.
\end{align}
Adding the mean terms gives the claimed bound and completes the proof.
\end{proof}

Unfortunately, the Legendre transform of Eq.~\eqref{eq:Lambda_J_bound2} cannot be evaluated 
in closed form.
For the optimization over $k$ the denominator (quadratic in $k$) together with the
$\sqrt{\mathrm{Var}_\mu(Q)}$ term yields a quintic
equation for the optimal $k^\ast$ which has no closed-form solution for generic parameters.
Our strategy is therefore to relax Eq.~\eqref{eq:Lambda_J_bound2} by a further bound
that recovers the sub-gamma form, whose Cram\'er transform is explicit.

\begin{lemma}[Sub-gamma bound]
\label{lem:bernstein_current}
Let $\bar W = W - \E^\mu[W]$ and
$\bar Q = Q - \E^\mu[Q]$.
Let $k^\ast_{\max}$ be the positive root of
$k(\|\bar W\|_{L^\infty(\mu)} + k\|\bar Q\|_{L^\infty(\mu)}) = \lambda_\mathrm{gap}$,
\begin{align}
k^\ast_{\max}
=
\begin{cases}
\dfrac{-\|\bar W\|_{L^\infty(\mu)} + \sqrt{\|\bar W\|_{L^\infty(\mu)}^2 + 4\lambda_\mathrm{gap}\|\bar Q\|_{L^\infty(\mu)}}}{2\|\bar Q\|_{L^\infty(\mu)}}
& \|\bar Q\|_{L^\infty(\mu)} > 0, 
\\[6pt]
\dfrac{\lambda_\mathrm{gap}}{\|\bar W\|_{L^\infty(\mu)}}
& \|\bar Q\|_{L^\infty(\mu)} = 0,
\end{cases}
\end{align}
and define
\begin{align}
c_{J,2} \equiv \frac{1}{k^\ast_{\max}},
\qquad
\tilde\sigma_{J,2}^2 \equiv 2\E^\mu[Q]
+ \frac{2}{\lambda_\mathrm{gap}}
  \left(\sqrt{\mathrm{Var}_\mu(W)} + k^\ast_{\max}\sqrt{\mathrm{Var}_\mu(Q)}\right)^2.
\end{align}
Then for all $k \in [0, k^\ast_{\max})$,
\begin{align}
\Lambda^J(k) - k\E^\mu[W]
\leq
\frac{\tilde\sigma_{J,2}^2 k^2}{2(1 - c_{J,2}k)}.
\label{eq:bernstein_current}
\end{align}
\end{lemma}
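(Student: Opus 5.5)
The plan is to start from Lemma~\ref{lem:Lambda_J_approach2} and relax its right-hand side, term by term, into the sub-gamma shape $\tilde\sigma_{J,2}^2k^2/[2(1-c_{J,2}k)]$. First I check that the admissible range of Lemma~\ref{lem:Lambda_J_approach2} contains $[0,k^\ast_{\max})$. The function $g(k)\equiv k\|\bar W\|_{L^\infty(\mu)}+k^2\|\bar Q\|_{L^\infty(\mu)}$ is increasing on $k\ge0$, with $g(0)=0$ and $g(k^\ast_{\max})=\lambda_\mathrm{gap}$ by definition of the positive root. Hence $g(k)<\lambda_\mathrm{gap}$ for every $k\in[0,k^\ast_{\max})$. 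The degenerate case $\|\bar Q\|_{L^\infty(\mu)}=0$ reduces to the linear condition and is covered in the same way. So for such $k$ the bound \eqref{eq:Lambda_J_bound2} holds. After subtracting $k\E^\mu[W]$ it remains to bound
\begin{align*}
k^2\E^\mu[Q]+\frac{k^2\bigl(\sqrt{\mathrm{Var}_\mu(W)}+k\sqrt{\mathrm{Var}_\mu(Q)}\bigr)^2}{\lambda_\mathrm{gap}-g(k)}.
\end{align*}

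The key step is to linearize the denominator, which is where the quadratic $g$ obstructs the sub-gamma form. Since $g$ is convex with $g(0)=0$, it lies below its chord on $[0,k^\ast_{\max}]$. Thus $g(k)\le (k/k^\ast_{\max})\,g(k^\ast_{\max})=\lambda_\mathrm{gap}c_{J,2}k$, which can also be checked directly from $k\le k^\ast_{\max}$. This gives
\begin{align*}
\lambda_\mathrm{gap}-g(k)\ \ge\ \lambda_\mathrm{gap}(1-c_{J,2}k)>0 .
\end{align*}
For the numerator I use monotonicity in $k$ and bound $k\sqrt{\mathrm{Var}_\mu(Q)}\le k^\ast_{\max}\sqrt{\mathrm{Var}_\mu(Q)}$. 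The fraction term is then at most
\begin{align*}
\frac{k^2\bigl(\sqrt{\mathrm{Var}_\mu(W)}+k^\ast_{\max}\sqrt{\mathrm{Var}_\mu(Q)}\bigr)^2}{\lambda_\mathrm{gap}(1-c_{J,2}k)}.
\end{align*}

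Finally I absorb the mean term exactly as in Lemma~\ref{lem:Lambda_J_approach1}. Since $0<1-c_{J,2}k\le1$, we have $k^2\E^\mu[Q]\le k^2\E^\mu[Q]/(1-c_{J,2}k)$, using $\E^\mu[Q]\ge0$ because $Q\ge0$. Adding the two pieces over the common denominator $1-c_{J,2}k$ gives
\begin{align*}
\frac{k^2}{1-c_{J,2}k}\Bigl[\E^\mu[Q]+\tfrac{1}{\lambda_\mathrm{gap}}\bigl(\sqrt{\mathrm{Var}_\mu(W)}+k^\ast_{\max}\sqrt{\mathrm{Var}_\mu(Q)}\bigr)^2\Bigr]=\frac{\tilde\sigma_{J,2}^2k^2}{2(1-c_{J,2}k)},
\end{align*}
which is \eqref{eq:bernstein_current}. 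The only genuinely nonroutine point is the chord (convexity) estimate for the quadratic denominator. Everything else is monotone replacement on the restricted range $k<k^\ast_{\max}$.
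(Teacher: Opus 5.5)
Your proof is correct and follows essentially the same route as the paper. Your chord estimate $g(k)\le\lambda_\mathrm{gap}c_{J,2}k$ is exactly the paper's key inequality \eqref{eq:key_ineq}, which the paper checks via $\lambda_\mathrm{gap}c_{J,2}=\|\bar W\|_{L^\infty(\mu)}+k^\ast_{\max}\|\bar Q\|_{L^\infty(\mu)}$; you then use the same monotone bound on the numerator and the same absorption of $k^2\E^\mu[Q]$ via $0<1-c_{J,2}k\le1$. (Bounding $\lambda_\mathrm{gap}-g(k)\ge\lambda_\mathrm{gap}(1-c_{J,2}k)$ directly, rather than multiplying through by $(1-kc_{J,2})/k^2$ as the paper does, also avoids dividing by $k$ at $k=0$.)
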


\begin{proof}
Starting from Lemma~\ref{lem:Lambda_J_approach2}, we have
\begin{align}
\Lambda^J(k) - k\E^\mu[W]
\leq
k^2 \E^\mu[Q]
+ \frac{k^2(\sqrt{\mathrm{Var}_\mu(W)} + k\sqrt{\mathrm{Var}_\mu(Q)})^2}
{\lambda_{\mathrm{gap}} - k\|\bar W\|_{L^\infty(\mu)} - k^2 \|\bar Q\|_{L^\infty(\mu)}}.
\label{eq:start_bernstein}
\end{align}
With $c_{J,2} \equiv 1/k^\ast_{\max}$ and $0\leq k<k^\ast_{\max}$ we have
$0 \leq k c_{J,2} < 1$. Multiplying both sides of Eq.~\eqref{eq:start_bernstein} by
$(1 - k c_{J,2})/k^2 \geq 0$ gives
\begin{align}
\frac{(\Lambda^J(k) - k\E^\mu[W])(1-k c_{J,2})}{k^2}
&\leq
\E^\mu[Q](1-k c_{J,2})
\nonumber
\\
&\quad + \frac{(\sqrt{\mathrm{Var}_\mu(W)} + k\sqrt{\mathrm{Var}_\mu(Q)})^2(1-k c_{J,2})}
{\lambda_{\mathrm{gap}} - k\|\bar W\|_{L^\infty(\mu)} - k^2 \|\bar Q\|_{L^\infty(\mu)}}.
\label{eq:step2}
\end{align}
Next, we bound the two terms of the right-hand side separately. 
For the first term, $k c_{J,2} \geq 0$ directly gives
\begin{align}
\E^\mu[Q](1 - k c_{J,2}) \leq \E^\mu[Q].
\end{align}
For the second term, we claim that
for all $k \in [0, k^\ast_{\max})$,
\begin{align}
\frac{1 - k c_{J,2}}
     {\lambda_{\mathrm{gap}} - k\|\bar W\|_{L^\infty(\mu)} - k^2 \|\bar Q\|_{L^\infty(\mu)}}
\leq \frac{1}{\lambda_{\mathrm{gap}}}.
\label{eq:key_ineq}
\end{align}
By cross multiplying (both denominators are positive for $k<k^\ast_{\max}$) and dividing by
$k>0$, Eq.~\eqref{eq:key_ineq} is equivalent to stating that
$\lambda_{\mathrm{gap}} c_{J,2} \geq \|\bar W\|_{L^\infty(\mu)} + k\|\bar Q\|_{L^\infty(\mu)}$,
which holds since, by the definition of $k^\ast_{\max}$,
\begin{align}
\lambda_{\mathrm{gap}} c_{J,2}
= \frac{\lambda_{\mathrm{gap}}}{k^\ast_{\max}}
= \|\bar W\|_{L^\infty(\mu)} + k^\ast_{\max} \|\bar Q\|_{L^\infty(\mu)}
\geq \|\bar W\|_{L^\infty(\mu)} 
+ k\|\bar Q\|_{L^\infty(\mu)}.
\end{align}
Moreover, for the second term $k \leq k^\ast_{\max}$ implies
\begin{align}
\sqrt{\mathrm{Var}_\mu(W)} + k\sqrt{\mathrm{Var}_\mu(Q)}
\leq \sqrt{\mathrm{Var}_\mu(W)} + k^\ast_{\max}\sqrt{\mathrm{Var}_\mu(Q)}.
\end{align}
In combination with Eq.~\eqref{eq:key_ineq} and $1-k c_{J,2}\leq 1$ we obtain
\begin{align}
\frac{(\sqrt{\mathrm{Var}_\mu(W)} + k\sqrt{\mathrm{Var}_\mu(Q)})^2(1-k c_{J,2})}
     {\lambda_{\mathrm{gap}} - k\|\bar W\|_{L^\infty(\mu)} - k^2 \|\bar Q\|_{L^\infty(\mu)}}
\leq \frac{\left(\sqrt{\mathrm{Var}_\mu(W)} + k^\ast_{\max}\sqrt{\mathrm{Var}_\mu(Q)} \right)^2}{\lambda_{\mathrm{gap}}}.
\end{align}
Inserting both bounds into Eq.~\eqref{eq:step2} and using the definition of
$\tilde\sigma_{J,2}^2$,
\begin{align}
\frac{(\Lambda^J(k) - k\E^\mu[W])(1-k c_{J,2})}{k^2}
\leq \E^\mu[Q] + \frac{\left(\sqrt{\mathrm{Var}_\mu(W)} + k^\ast_{\max}\sqrt{\mathrm{Var}_\mu(Q)}\right)^2}{\lambda_{\mathrm{gap}}}
= \frac{\tilde\sigma_{J,2}^2}{2},
\end{align}
which gives Eq.~\eqref{eq:bernstein_current} after re-arranging and completes the proof.
\end{proof}

\begin{theorem}[Explicit concentration bound for the empirical current (Approach 2)]
\label{thm:explicit_current_2}
Let $(X_t)_{t\geq 0}$ be an ergodic diffusion with invariant measure $\mu$ whose
symmetric part $\mathcal{L}_S$ satisfies the Poincar\'e inequality~\eqref{eq:poincare}
with spectral gap $\lambda_\mathrm{gap}>0$ (Assumptions~\ref{ass:dynamics}). 
Let $\overline{J}_t(U)$ denote the empirical current according to Definition~\ref{def:empirical_current}
with ergodic mean $\E^\mu[\overline{J}_t(U)]=\E^\mu[\vs\cdot U]= \E^\mu[W]$,
and let $\nu\ll\mu$ with $\dd\nu/\dd\mu\in L^2(\mu)$.
Then for all $t > 0$ and $a > 0$,
\begin{align}
\mathbb{P}^\nu\left(\overline{J}_t(U) - \E^\mu[\overline{J}_t] \geq a\right)
\leq
\left\|\frac{\dd\nu}{\dd\mu}\right\|_{L^2(\mu)}
\exp\left[-t \frac{\tilde\sigma_{J,2}^2}{c_{J,2}^2}
  h\left(\frac{c_{J,2} a}{\tilde\sigma_{J,2}^2}\right)\right],
\label{eq:explicit_current_2}
\end{align}
where $h(u) = 1 + u - \sqrt{1+2u}$ and the parameters 
$\tilde\sigma_{J,2}^2$ and $c_{J,2}$ as in
Lemma~\ref{lem:bernstein_current}.
\end{theorem}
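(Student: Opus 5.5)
The proof follows the same template as Theorem~\ref{thm:explicit_current_1}, with Lemma~\ref{lem:bernstein_current} in place of Lemma~\ref{lem:Lambda_J_approach1}. By Theorem~\ref{thm:conc_current} it suffices to bound the Cram\'er transform from below:
\begin{align}
I^J(\E^\mu[\overline{J}_t]+a)\geq\sup_{k>0}\bigl[k(\E^\mu[\overline{J}_t]+a)-\Lambda^J(k)\bigr].
\end{align}
Restricting this supremum to the subinterval $k\in[0,k^\ast_{\max})$ can only decrease it, so the restriction is a valid lower bound.

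On that interval, Lemma~\ref{lem:bernstein_current} gives
\begin{align}
\Lambda^J(k)-k\E^\mu[W]\leq\frac{\tilde\sigma_{J,2}^2k^2}{2(1-c_{J,2}k)}.
\end{align}
By Proposition~\ref{prop:mean} we have $\E^\mu[\overline{J}_t]=\E^\mu[W]$, so the mean terms cancel and
\begin{align}
k(\E^\mu[W]+a)-\Lambda^J(k)\geq ka-\frac{\tilde\sigma_{J,2}^2k^2}{2(1-c_{J,2}k)}.
\end{align}
Since $c_{J,2}=1/k^\ast_{\max}$, the interval $[0,k^\ast_{\max})$ is exactly $[0,1/c_{J,2})$. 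This is the domain used in Lemma~\ref{lem:subgamma}, which then yields
\begin{align}
I^J(\E^\mu[W]+a)\geq\frac{\tilde\sigma_{J,2}^2}{c_{J,2}^2}\,h\!\left(\frac{c_{J,2}a}{\tilde\sigma_{J,2}^2}\right).
\end{align}
The optimizer $k^\ast$ given in Lemma~\ref{lem:subgamma} lies in $(0,1/c_{J,2})$, so it is admissible. Substituting this lower bound into Eq.~\eqref{eq:conc_current_right} gives Eq.~\eqref{eq:explicit_current_2}.

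The only point needing care is that Lemma~\ref{lem:subgamma} assumes $\tilde\sigma^2>0$ and $c>0$.
\begin{itemize}
\item \emph{Positivity of $\tilde\sigma_{J,2}^2$.} This holds whenever $U\not\equiv0$, because $D>0$ gives $\E^\mu[Q]>0$.
\item \emph{Degenerate case $c_{J,2}=0$.} This occurs when $\|\bar W\|_{L^\infty(\mu)}=\|\bar Q\|_{L^\infty(\mu)}=0$, so that $k^\ast_{\max}=\infty$. Then Lemma~\ref{lem:bernstein_current} holds for all $k\ge0$ and gives a pure Gaussian bound with exponent $a^2/(2\tilde\sigma_{J,2}^2)$. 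This is the limit $c\to0$ of $\tfrac{\tilde\sigma^2}{c^2}h(ca/\tilde\sigma^2)$, since $h(u)\sim u^2/2$. So the statement is read in this limiting sense, exactly as for Approach~1 under detailed balance.
\end{itemize}
Beyond this bookkeeping there is no real obstacle, since all the analytic work is already contained in Lemmas~\ref{lem:Lambda_J_approach2} and~\ref{lem:bernstein_current}.
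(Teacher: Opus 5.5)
Your proposal is correct and follows the paper's proof essentially verbatim. Both reduce via Theorem~\ref{thm:conc_current} to a lower bound on $I^J$, cancel the mean using $\E^\mu[\overline{J}_t]=\E^\mu[W]$, insert the sub-gamma relaxation of Lemma~\ref{lem:bernstein_current} on $[0,1/c_{J,2})$, and apply Lemma~\ref{lem:subgamma}; your extra remarks on $\tilde\sigma_{J,2}^2>0$ and on the degenerate case $c_{J,2}=0$ (read as the Gaussian limit via $h(u)\sim u^2/2$) are valid bookkeeping that the paper leaves implicit.
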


\begin{proof}
By Theorem~\ref{thm:conc_current} it suffices to prove a lower bound on the Cram\'er transform
\begin{align}
I^J(\E^\mu[\overline{J}_t] + a)
= \sup_{k>0}\bigl[k(\E^\mu[W]+a) - \Lambda^J(k)\bigr]
= \sup_{k>0}\bigl[ka - (\Lambda^J(k) - k\E^\mu[W])\bigr],
\end{align}
where the mean cancels against the linear term
since $\E^\mu[\overline{J}_t]=\E^\mu[W]$.
By Lemma~\ref{lem:bernstein_current}, for $0 \leq k < 1/c_{J,2}$,
\begin{align}
\Lambda^J(k) - k\E^\mu[W]
\leq \frac{\tilde\sigma_{J,2}^2 k^2}{2(1 - c_{J,2} k)}.
\end{align}
By Lemma~\ref{lem:subgamma} where we replace $(\tilde\sigma^2, c)$ by
$(\tilde\sigma_{J,2}^2, c_{J,2})$, we find
\begin{align}
I^J(\E^\mu[\overline{J}_t] + a)
\geq \sup_{0 \leq k < 1/c_{J,2}}\left[ka -
\frac{\tilde\sigma_{J,2}^2 k^2}{2(1 - c_{J,2} k)}\right]
= \frac{\tilde\sigma_{J,2}^2}{c_{J,2}^2}h \left(\frac{c_{J,2} a}{\tilde\sigma_{J,2}^2}\right).
\end{align}
Inserting this into the Cram\'er transform of Theorem~\ref{thm:conc_current} completes
the proof.
\end{proof}

\subsubsection{Discussion}
We close the section by briefly discussing the two different sub-gamma bounds
and stating the corresponding left-tail and two-sided concentration bounds.

\begin{remark}[Gaussian and exponential regimes]
\label{rem:subgamma_current}
The exponent of the simple Bernstein expression interpolates between a Gaussian and exponential
regime, depending on which term in $2(\tilde{\sigma}_J^2 +  a c_J)$ dominates, 
where we write $\tilde\sigma_J^2$ and $c_J$ for the parameters of either approach.
In particular, we find
\begin{align}
\frac{a^2}{2(\tilde\sigma_J^2 + c_J a)}
\approx
\begin{cases}
\dfrac{a^2}{2\tilde\sigma_J^2}, & a\ll \tilde\sigma_J^2/c_J \quad\text{(Gaussian)},
\\
\dfrac{a}{2c_J}, & a\gg \tilde\sigma_J^2/c_J \quad\text{(exponential)},
\end{cases}
\end{align}
meaning that $\overline{J}_t$ behaves 
sub-Gaussian with variance $\tilde\sigma_J^2/t$ for deviations smaller than
$a^\ast = \tilde\sigma_J^2/c_J$ and exponential with rate parameter $t/2c_J$ for larger deviations.
Explicitly, for Approach~1 the crossover
reads
\begin{align}
a^\ast_1 = \frac{\tilde\sigma_{J,1}^2}{c_{J,1}}
= \frac{2\mathrm{Var}_\mu(W)}{\|\bar W\|_{L^\infty(\mu)}}
+ \frac{2\lambda_\mathrm{gap}\|Q\|_{L^\infty(\mu)}}{\|\bar W\|_{L^\infty(\mu)}}.
\end{align} 
For Approach~2, using $k^\ast_{\max}$ as defined in Lemma~\ref{lem:bernstein_current} via
$\lambda_\mathrm{gap} = k^\ast_{\max}\bigl(\|\bar W\|_{L^\infty(\mu)} + k^\ast_{\max}\|\bar Q\|_{L^\infty(\mu)}\bigr)$, we obtain
\begin{align}
a^\ast_2 = \frac{\tilde\sigma_{J,2}^2}{c_{J,2}}
= 2k^\ast_{\max}\E^\mu[Q]
+ \frac{2\bigl(\sqrt{\mathrm{Var}_\mu(W)} + k^\ast_{\max}\sqrt{\mathrm{Var}_\mu(Q)}\bigr)^2}
       {\|\bar W\|_{L^\infty(\mu)} + k^\ast_{\max}\|\bar Q\|_{L^\infty(\mu)}}.
\end{align}
Notably, while in the density case the crossover $a^\ast = 2\mathrm{Var}_\mu(V)/\|\bar V\|_{L^\infty(\mu)}$ 
was independent of the spectral gap, both results for the current now depend on 
the symmetric part of the underlying dynamics via $\lambda_\mathrm{gap}$.
\end{remark}

\begin{remark}[Comparison of the two approaches]
\label{rem:approach_comparison}
We start with comparing the variance proxy that is responsible
for the sub-Gaussian behavior for small deviations.
The difference between the two approaches may be expressed as
\begin{align}
\tilde\sigma_{J,2}^2 - \tilde\sigma_{J,1}^2
= 2\bigl(\E^\mu[Q] - \|Q\|_{L^\infty(\mu)}\bigr)
+ \frac{2}{\lambda_\mathrm{gap}}\Bigl[2k^\ast_{\max}\sqrt{\mathrm{Var}_\mu(W)\mathrm{Var}_\mu(Q)}
  + (k^\ast_{\max})^2\mathrm{Var}_\mu(Q)\Bigr].
\end{align}
This means that the improvement we make by replacing
the worst case $\|Q\|_{L^\infty(\mu)}$ for the average $\E^\mu[Q]$ in the second approach,
only leads to a sharper Gaussian regime when it compensates the variance contributions
from the second term.
A comparison between the respective exponential scale parameters gives directly, by the definition of $k^\ast_{\max}$, 
\begin{align}
c_{J,2} = \frac{1}{k^\ast_{\max}}
= \frac{\|\bar W\|_{L^\infty(\mu)} + k^\ast_{\max}\|\bar Q\|_{L^\infty(\mu)}}{\lambda_\mathrm{gap}}
\geq \frac{\|\bar W\|_{L^\infty(\mu)}}{\lambda_\mathrm{gap}} = c_{J,1},
\end{align}
where equality is achieved if and only if $\|\bar Q\|_{L^\infty(\mu)} = 0$.
In approach 2, the smaller optimization range in $k$, due to the introduction of $k^\ast_{\max}$, 
therefore yields a larger scale parameter for the exponential decay and hence a weaker bound
for large deviations.
Moreover, since $c_{J,2}\geq c_{J,1}$,
whenever Approach~2 improves the variance proxy the crossover is the smaller
(Remark~\ref{rem:subgamma_current}),
$a^\ast_2 \leq a^\ast_1$, i.e., the narrower Gaussian
range comes with an earlier onset of the exponential tail.
\end{remark}

\begin{remark}[Concentration bounds at detailed balance.]
\label{rem:db_bounds}
Under detailed balance the stationary current vanishes,
$\js\equiv 0$, hence $\vs\equiv 0$ 
and correspondingly also $W = \vs\cdot U \equiv 0$. 
Consequently, we further have $\E^\mu[W] = 0$,
$\mathrm{Var}_\mu(W) = 0$, and $\|\bar W\|_{L^\infty(\mu)} = 0$, i.e.,
any current-type functional (irrespective
of the choice of $U$) has zero mean, and fluctuations of $\overline{J}_t(U)$
are purely driven by the diffusive term $Q = U\cdot DU \geq 0$. 

For the first approach, $\|\bar W\|_{L^\infty(\mu)} = 0$ directly
implies that the exponential parameter 
vanishes, $c_{J,1} = 0$, and the sub-gamma form reduces to a simpler
quadratic structure,
\begin{align}
  \Lambda^J(k) \leq \|Q\|_{L^\infty(\mu)} k^2 
  = \frac{1}{2}\tilde\sigma_{J,1}^2k^2,
  \qquad
  \tilde\sigma_{J,1}^2 = 2\|Q\|_{L^\infty(\mu)}.
\end{align}
The corresponding 
Legendre transform thus gives the canonical Gaussian expression 
$a^2/(2\tilde\sigma_{J,1}^2)$ such that, with
$\E^\mu[\overline{J}_t] = 0$, 
Theorem~\ref{thm:explicit_current_1} yields the purely sub-Gaussian bound
under detailed balance
\begin{align}
  \mathbb{P}^\nu\left(\overline{J}_t(U) - \E^\mu[\overline{J}_t] \geq a\right)
  \leq
  \left\|\frac{\dd\nu}{\dd\mu}\right\|_{L^2(\mu)}
  \exp\left(-\frac{t a^2}{4\|Q\|_{L^\infty(\mu)}}\right),
\end{align}
valid for all $t > 0$ and $a > 0$.
This recovers the results from 
the general implicit bound in Corollary~\ref{cor:db_subgaussian}
and highlights that, within our bounds and in the absence of dissipation,
current fluctuations have \emph{no}
heavy tail and concentrate at a 
rate set by the diffusive sub-Gaussian fluctuations only.
Tails heavier than Gaussian are therefore a manifestation of strictly 
non-equilibrium (i.e., irreversible) effects.

For the second approach,
centering the quadratic term
allows us to account for the potentially heterogeneity of $Q$,
which, however, introduces an artifact that does not vanish under detailed balance.
For a non-constant $Q$, i.e., when $\|\bar Q\|_{L^\infty(\mu)} > 0$,
the admissible $k$ range stays finite even when we have $W \equiv 0$,
\begin{align}
  k^\ast_{\max} = \sqrt{\frac{\lambda_\mathrm{gap}}{\|\bar Q \|_{L^\infty(\mu)}}},
  \qquad
  c_{J,2} = \sqrt{\frac{\|\bar Q\|_{L^\infty(\mu)}}{\lambda_\mathrm{gap}}}> 0,
  \qquad
  \tilde\sigma_{J,2}^2 = 2 \E^\mu[Q]
    + \frac{2\mathrm{Var}_\mu(Q)}{\|\bar Q\|_{L^\infty(\mu)}}.
\end{align}
This means that the corresponding concentration
bound keeps it exponential regime with tails that are heavier than those of a 
Gaussian with parameter $c_{J,2} > 0$ even though 
the true fluctuations are sub-Gaussian.

Notably, the difference between the two approaches
disappears whenever $Q$ is constant (e.g., $U$ is constant)
since then $\|\bar Q\|_{L^\infty(\mu)} = 0$ and $c_{J,2} = 0$, and the two approaches coincide,
$\tilde\sigma_{J,1}^2 = \tilde\sigma_{J,2}^2 = 2\E^\mu[Q] = 2\|Q\|_{L^\infty(\mu)}$. Taken together, Approach~1 thus gives the correct sub-Gaussian behavior near
equilibrium by controlling
the diffusion contribution via a uniform norm
$\|Q\|_{L^\infty(\mu)}$, while
Approach~2 replaces $\|Q\|_{L^\infty(\mu)}$ with 
the average $\E^\mu[Q]$ in the corresponding
variance proxy and thus is better when the diffusive noise 
is strongly heterogeneous, 
$\E^\mu[Q] \ll \|Q\|_{L^\infty(\mu)}$.
This improvement comes at the cost of a worse 
exponential regime $c_{J,2} \geq c_{J,1}$ (Remark~\ref{rem:approach_comparison}),
and does not recover the fact (unless $Q$ is constant) that
detailed balance fluctuations are sub-Gaussian at all deviation scales $a$.
\end{remark}

\begin{corollary}[Left-tail and two-sided bounds for current observables]
\label{cor:two_sided_current_explicit}
Let $i\in\{1,2\}$ and adopt the assumptions of Theorem~\ref{thm:explicit_current_1}
($i=1$) or Theorem~\ref{thm:explicit_current_2} ($i=2$). 
Since $Q$, $\mathrm{Var}_\mu(W)$
and $\|\bar W\|_{L^\infty(\mu)}$ are invariant under sign reversal 
of $\overline{J}_t(U)\mapsto -\overline{J}_t(U)$ (hence $U\mapsto -U$)
the parameters
$\tilde\sigma_{J,i}^2, c_{J,i}$ remain unchanged.
Applying the theorems to 
$-\overline{J}_t(U)$ therefore leads to bounds on the left tail with the same constants.
For all $t>0$, $a>0$,
\begin{align}
\mathbb{P}^\nu\left(\E^\mu[\overline{J}_t] -\overline{J}_t(U)\geq a\right)
\leq
\left\|\frac{\dd\nu}{\dd\mu}\right\|_{L^2(\mu)}
\exp\left[-t\frac{\tilde\sigma_{J,i}^2}{c_{J,i}^2}
  h\left(\frac{c_{J,i} a}{\tilde\sigma_{J,i}^2}\right)\right].
\end{align}
Correspondingly, a union bound correspondingly gives the two-sided sub-gamma bound, and its simpler Bernstein
form with the additional factor of $2$, i.e.,
\begin{align}
\mathbb{P}^\nu\left(\bigl|\overline{J}_t(U) - \E^\mu[\overline{J}_t]\bigr| \geq a\right)
&\leq
2\left\|\frac{\dd\nu}{\dd\mu}\right\|_{L^2(\mu)}
\exp\left[-t\frac{\tilde\sigma_{J,i}^2}{c_{J,i}^2}
  h\left(\frac{c_{J,i} a}{\tilde\sigma_{J,i}^2}\right)\right]
\\
&\leq
2\left\|\frac{\dd\nu}{\dd\mu}\right\|_{L^2(\mu)}
\exp\left[-\frac{ta^2}{2(\tilde\sigma_{J,i}^2 + c_{J,i}a)}\right].
\end{align}
\end{corollary}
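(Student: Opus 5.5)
The plan is to reduce everything to the right-tail results already proved, Theorems~\ref{thm:explicit_current_1} and~\ref{thm:explicit_current_2}, applied to the reflected observable $-U$. First we check that $-U$ is admissible: it lies in $C^1_b(\mathbb{R}^d;\mathbb{R}^d)$, and the Stratonovich integral is linear in its integrand. Hence $\overline{J}_t(-U)=-\overline{J}_t(U)$ pathwise, and by Proposition~\ref{prop:mean} $\E^\mu[\overline{J}_t(-U)]=-\E^\mu[\overline{J}_t(U)]$. The event $\{\E^\mu[\overline{J}_t(U)]-\overline{J}_t(U)\geq a\}$ is therefore exactly the right-tail event $\{\overline{J}_t(-U)-\E^\mu[\overline{J}_t(-U)]\geq a\}$ for the observable $-U$.

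Next we verify that the constants are unchanged under $U\mapsto -U$. Write $W_-=\vs\cdot(-U)=-W$ and $Q_-=(-U)\cdot D(-U)=Q$. Then $\mathrm{Var}_\mu(W_-)=\mathrm{Var}_\mu(W)$ and $\|W_--\E^\mu[W_-]\|_{L^\infty(\mu)}=\|\bar W\|_{L^\infty(\mu)}$. All quantities built from $Q$ ($\|Q\|_{L^\infty(\mu)}$, $\E^\mu[Q]$, $\mathrm{Var}_\mu(Q)$, $\|\bar Q\|_{L^\infty(\mu)}$) are identical. Consequently $k^\ast_{\max}$ in Lemma~\ref{lem:bernstein_current} is unchanged, and so are $\tilde\sigma_{J,i}^2$ and $c_{J,i}$ for $i=1,2$. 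The initial law $\nu$ and its prefactor $\|\dd\nu/\dd\mu\|_{L^2(\mu)}$ do not involve $U$, and the spectral gap depends only on the dynamics. Invoking the appropriate theorem for $-U$ then gives the left-tail sub-gamma bound verbatim.

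For the two-sided statement we write $\{|\overline{J}_t-\E^\mu[\overline{J}_t]|\geq a\}$ as the union of the right- and left-tail events. A union bound then gives twice the common sub-gamma bound. The final Bernstein form follows from inequality~\eqref{eq:subgamma_bernstein} of Lemma~\ref{lem:subgamma}, which says $\phi^\ast(a)\geq a^2/[2(\tilde\sigma^2+ca)]$, applied with $(\tilde\sigma^2,c)=(\tilde\sigma_{J,i}^2,c_{J,i})$ together with the monotonicity of $x\mapsto \e^{-tx}$.

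The only point needing care is the degenerate case $c_{J,i}=0$, which occurs, e.g., under detailed balance in Approach~1, or when $\bar W\equiv0$ and $\bar Q\equiv0$. There the expression $\frac{\tilde\sigma^2}{c^2}h(ca/\tilde\sigma^2)$ has to be read as its limit $a^2/(2\tilde\sigma^2)$. This follows from $h(u)\sim u^2/2$ as $u\to0$, and it is consistent with Remark~\ref{rem:db_bounds}. The Bernstein form then holds with equality of exponents. We expect this interpretation to be the main (minor) obstacle; the rest is bookkeeping.
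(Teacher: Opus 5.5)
Your proposal is correct and follows the paper's own argument. You apply Theorems~\ref{thm:explicit_current_1} and~\ref{thm:explicit_current_2} to $-U$, use that $W\mapsto -W$ and $Q\mapsto Q$ leave $\tilde\sigma_{J,i}^2$, $c_{J,i}$ (and $k^\ast_{\max}$) unchanged, then close with a union bound and inequality~\eqref{eq:subgamma_bernstein}; your added treatment of the degenerate case $c_{J,i}=0$ through the limit $h(u)\sim u^2/2$ is harmless and matches the reading adopted in Remark~\ref{rem:db_bounds}.
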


\subsection{Finite-time upper bounds on variances}
\label{sec:variance_bounds}
The Poincar\'e bounds on $\Lambda^A(k)$ 
in Lemmas~\ref{lem:poincare_density}, \ref{lem:Lambda_J_approach1} , and~\ref{lem:Lambda_J_approach2}
do not only control the tails via the Chernoff bound, but, as they bound the respective log-moment generating functions,
they also bound the stationary variance of the additive 
functionals 
(see also \cite{bakewell2023general,bakewell2025bounds}). 
The variance is indeed controlled via the second derivative of $\Lambda^A(k)$ evaluated
at $k=0$, and the $k^2$ term therefore yields an explicit upper bound. 
Importantly, this can be achieved by using the full expression 
rather than working with the simplified sub-gamma form.

\begin{lemma}[Variance bound from log-moment generating functions]
\label{lem:variance_from_growth}
Let $A\in\{\rho,J\}$ and suppose it holds that
$\Lambda^A(k)- k\E^\mu[\overline A_t] \leq\phi(k)$ for $k\in[0,k_+)$ for $\phi$ twice
differentiable at $0$ with $\phi(0)=\phi'(0)=0$.
Then for all $t>0$,
\begin{align}
  \mathrm{Var}_\mu(\overline A_t)\leq\frac{\phi''(0)}{t}.
\end{align}
\end{lemma}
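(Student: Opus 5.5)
Since $\Lambda^A$ controls the moment generating function, I plan to extract the variance from the second-order Taylor expansion of the stationary log-moment generating function. Write $m\equiv\E^\mu[\overline A_t]$ and $A_t=t\overline A_t$. By Corollary~\ref{cor:mgf_bound} with $\nu=\mu$ (so $\|\dd\mu/\dd\mu\|_{L^2(\mu)}=1$), for every $k\in[0,k_+)$,
\begin{align}
  \log\E^\mu\bigl[\e^{k(A_t-tm)}\bigr]\leq t\bigl(\Lambda^A(k)-km\bigr)\leq t\,\phi(k).
\end{align}
Set $G(k)\equiv\log\E^\mu[\e^{k(A_t-tm)}]$. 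Then $G(0)=0$ and $G'(0)=\E^\mu[A_t]-tm=0$, by the stationarity of the mean (Proposition~\ref{prop:mean}). Moreover,
\begin{align}
  G''(0)=\Var_\mu(A_t)=t^2\,\Var_\mu(\overline A_t).
\end{align}

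Both $G$ and $t\phi$ vanish to first order at $0$. Dividing $G(k)\le t\phi(k)$ by $k^2/2$ and letting $k\downarrow0$ should therefore give $G''(0)\le t\phi''(0)$, which is
\begin{align}
  t^2\Var_\mu(\overline A_t)\le t\,\phi''(0),
\end{align}
and hence the claim after dividing by $t^2$. Only one-sided limits $k\to0^+$ are needed, so the restriction to $k\ge0$ causes no problem. On the $\phi$ side, the assumption $\phi(0)=\phi'(0)=0$ and twice differentiability at $0$ give $\phi(k)=\tfrac12\phi''(0)k^2+o(k^2)$ by Taylor's theorem with Peano remainder.

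The main obstacle is justifying the expansion $G(k)=\tfrac12G''(0)k^2+o(k^2)$. This requires the moment generating function to be finite near $0$ and to have a finite second moment. Finiteness on $[0,k_+)$ follows from the bound itself, since $\phi$ is finite there. For $A=\rho$ with bounded $V$, finiteness is immediate for all $k$. For $A=J$, the It\^o decomposition in the proof of Proposition~\ref{prop:mean} writes $J_t$ as a bounded Lebesgue part plus a stochastic integral of the bounded integrand $U^\top\sigma$. The exponential martingale (Novikov) bound then gives finite exponential moments for all $k\in\mathbb{R}$. With finite exponential moments on a two-sided neighbourhood of $0$, $G$ is real-analytic there, and the expansion holds.

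If one prefers to avoid analyticity, an alternative suffices for one-sided $k\ge0$. Use the elementary inequality $\e^x\ge1+x+\tfrac{x^2}{2}\mathbf 1_{\{x\ge0\}}+\dots$, or more simply apply dominated convergence to $(\e^{kY}-1-kY)/k^2\to Y^2/2$ with $Y=A_t-tm$. The domination comes from $\e^{k_0|Y|}$ integrable for some $k_0>0$, which the two-sided exponential moments supply. This gives $\E[\e^{kY}]=1+\tfrac{k^2}{2}\E Y^2+o(k^2)$, and taking logarithms yields the desired expansion of $G$.
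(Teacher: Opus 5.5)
Your argument is the paper's: apply Corollary~\ref{cor:mgf_bound} with $\nu=\mu$ to get $\psi(k)\le t\phi(k)$ on $[0,k_+)$ for the centered log-moment generating function, then compare second-order behaviour at $k=0$. The paper only asserts that $\psi$ is finite near $0$ and that the two Taylor expansions can be compared.

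The step of yours that fails as written is the extra regularity justification for $A=J$. The Lebesgue part of $J_t$ is $\int_0^t (U\cdot b+\nabla\cdot DU)(X_s)\,\dd s$. Since $b$ is only assumed Lipschitz, $U\cdot b$ need not be bounded; take an Ornstein--Uhlenbeck drift $b(x)=-x$ with $U$ not compactly supported. So your argument does not establish two-sided exponential moments, analyticity of $G$, or the dominating function $\e^{k_0|Y|}$. The inequality $\e^x\ge1+x+\tfrac{x^2}{2}\mathbf 1_{\{x\ge0\}}$ also does not help, since it only captures $\E[Y^2\mathbf 1_{\{Y\ge0\}}]$.

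None of this is needed, because the comparison is one-sided. Set $Y=A_t-t\,\E^\mu[\overline A_t]$. Then $0\le (\e^{kY}-1-kY)/k^2\to Y^2/2$ as $k\downarrow0$. Fatou's lemma, $\E^\mu[Y]=0$, and $\E^\mu[\e^{kY}]\le\e^{t\phi(k)}$ give
\begin{align*}
\frac12\E^\mu[Y^2]\le\liminf_{k\downarrow0}\frac{\E^\mu[\e^{kY}]-1}{k^2}\le\lim_{k\downarrow0}\frac{\e^{t\phi(k)}-1}{k^2}=\frac{t\,\phi''(0)}{2},
\end{align*}
that is, $t^2\Var_\mu(\overline A_t)\le t\,\phi''(0)$. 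This uses only the hypothesis on $[0,k_+)$ and needs no exponential moments for $k<0$. It also does not presuppose $Y\in L^2(\mathbb{P}^\mu)$.
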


\begin{proof}
We start with the log-moment generating function of a centered functional $A_t$, 
$\psi(k)\equiv\log\E^\mu\bigl[\e^{k(A_t-\E^\mu[A_t])}\bigr]$, which is finite near $k=0$ 
and it holds that $\psi(0)=\psi'(0)=0$.
In particular, the stationary variance can be immediately identified as 
$\psi''(0)=\mathrm{Var}_\mu(A_t)=t^2\mathrm{Var}_\mu(\overline A_t)$. 
By Corollary~\ref{cor:mgf_bound} and taking $\nu=\mu$
it holds that 
$\log\E^\mu\bigl[\e^{kA_t}\bigr]\leq t\Lambda^A(k)$.
Since $\E^\mu[A_t]=t \E^\mu[\overline A_t]$ we further have
$\psi(k)\leq t\bigl[\Lambda^A(k)-\E^\mu[\overline A_t] k\bigr]\leq t\phi(k)$ on the range 
$[0,k_+)$ where the upper bound is valid. 
A comparison of the Taylor expansion around $0$ for $\psi$ and $\phi$
gives $\psi''(0)\leq t\phi''(0)$, i.e., $t^2\mathrm{Var}_\mu(\overline A_t)\leq t\phi''(0)$
and completes the proof. 
\end{proof}

\begin{proposition}[Finite-time variance upper bounds]
\label{prop:variance_explicit}
Consider the empirical density $\overline{\rho}_t(V)$ (Definition~\ref{def:empirical_density})
and empirical current $\overline J_t(U)$ (Definition~\ref{def:empirical_current})
and let Assumptions~\ref{ass:dynamics} hold.
Then for all $t>0$,
\begin{align}
  \mathrm{Var}_\mu(\overline\rho_t)
    &\leq \frac{2\mathrm{Var}_\mu(V)}{\lambda_\mathrm{gap}t},
  \label{eq:var_density}\\
  \mathrm{Var}_\mu(\overline J_t)
    &\leq \frac{1}{t}\left(2\E^\mu[Q]
      +\frac{2\mathrm{Var}_\mu(W)}{\lambda_\mathrm{gap}}\right)
      \leq \frac{1}{t}\left(2\|Q\|_{L^\infty(\mu)}
      +\frac{2\mathrm{Var}_\mu(W)}{\lambda_\mathrm{gap}}\right),
  \label{eq:var_current}
\end{align}
where $\lambda_\mathrm{gap}$ denotes the spectral gap~\eqref{eq:gap_variational} of $\mathcal{L}_S$
and $W=\vs\cdot U$ and $Q=U\cdot DU$ as before.
The corresponding asymptotic variances, 
defined by $\sigma_{A,\infty}^2\equiv\lim_{t\to\infty}t\mathrm{Var}_\mu(\overline A_t)$,
are bounded by the same constants, i.e., 
\begin{align}
  \sigma_{\rho,\infty}^2\leq\frac{2\mathrm{Var}_\mu(V)}{\lambda_\mathrm{gap}},
	\quad\quad
  \sigma_{J,\infty}^2\leq 2\E^\mu[Q]+\frac{2\mathrm{Var}_\mu(W)}{\lambda_\mathrm{gap}}
  \leq 2\|Q\|_{L^\infty(\mu)}+\frac{2\mathrm{Var}_\mu(W)}{\lambda_\mathrm{gap}}.
  \label{eq:asymptotic_variance}
\end{align}
\end{proposition}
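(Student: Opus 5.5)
The plan is to apply Lemma~\ref{lem:variance_from_growth} with a function $\phi$ read off from the \emph{unsimplified} Poincar\'e bounds on $\Lambda^A(k)$, and then compute $\phi''(0)$.

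\emph{Density case.} By Lemma~\ref{lem:poincare_density}, for $0\le k<\lambda_\mathrm{gap}/\|\bar V\|_{L^\infty(\mu)}$ we have $\Lambda^\rho(k)-k\E^\mu[\overline\rho_t]\le\phi_\rho(k)$, where $\phi_\rho(k)=k^2\mathrm{Var}_\mu(V)/(\lambda_\mathrm{gap}-k\|\bar V\|_{L^\infty(\mu)})$. This function is smooth near $0$ and satisfies $\phi_\rho(0)=\phi_\rho'(0)=0$. Moreover $\phi_\rho''(0)=2\mathrm{Var}_\mu(V)/\lambda_\mathrm{gap}$, since only the $k^2$ coefficient of the expansion, evaluated with denominator $\lambda_\mathrm{gap}$, contributes. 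Lemma~\ref{lem:variance_from_growth} then gives \eqref{eq:var_density}.

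\emph{Current case.} For the sharper first inequality in \eqref{eq:var_current} I use Lemma~\ref{lem:Lambda_J_approach2}, not the sub-gamma relaxation. That lemma yields, for small $k\ge0$,
\begin{align*}
\phi_J(k)=k^2\E^\mu[Q]+\frac{k^2\bigl(\sqrt{\mathrm{Var}_\mu(W)}+k\sqrt{\mathrm{Var}_\mu(Q)}\bigr)^2}{\lambda_\mathrm{gap}-k\|\bar W\|_{L^\infty(\mu)}-k^2\|\bar Q\|_{L^\infty(\mu)}}.
\end{align*}
Again $\phi_J(0)=\phi_J'(0)=0$. The second-order coefficient is $\E^\mu[Q]+\mathrm{Var}_\mu(W)/\lambda_\mathrm{gap}$, because the $k\sqrt{\mathrm{Var}_\mu(Q)}$ correction and the $k$-dependence of the denominator only enter at order $k^3$. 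Hence $\phi_J''(0)=2\E^\mu[Q]+2\mathrm{Var}_\mu(W)/\lambda_\mathrm{gap}$. The second inequality follows from $\E^\mu[Q]\le\|Q\|_{L^\infty(\mu)}$, which holds because $Q\ge0$ and $\mu$ is a probability measure. Alternatively, one can apply the same argument to the first inequality of Lemma~\ref{lem:Lambda_J_approach1}.

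\emph{Asymptotic variances.} Since the bounds $t\,\mathrm{Var}_\mu(\overline A_t)\le\phi''(0)$ are uniform in $t$, taking $t\to\infty$ gives \eqref{eq:asymptotic_variance}, provided the limit defining $\sigma^2_{A,\infty}$ exists. If it does not, the same bound holds for the $\limsup$.

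\emph{Main obstacle.} The delicate point is the one-sided nature of Lemma~\ref{lem:variance_from_growth}. The bound $\psi\le t\phi$ is only known on $[0,k_+)$. Comparing second derivatives at $0$ from the right still works, because $\psi(0)=\phi(0)=0$ and $\psi'(0)=\phi'(0)=0$, so $\psi''(0)=\lim_{k\downarrow0}2\psi(k)/k^2\le\lim_{k\downarrow0}2t\phi(k)/k^2$. This step needs $\psi$ to be twice differentiable at $0$, i.e.\ the MGF to be finite in a neighbourhood of $0$. For the current I would justify this by Corollary~\ref{cor:mgf_bound} applied at both signs of $k$, using $\Lambda^J(-k)<\infty$ via $U\mapsto-U$ and boundedness of $W$ and $Q$.
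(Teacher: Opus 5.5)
Your proposal is correct and follows the paper's proof. Like the paper, you apply Lemma~\ref{lem:variance_from_growth} to the unsimplified Poincar\'e bounds of Lemmas~\ref{lem:poincare_density} and~\ref{lem:Lambda_J_approach2} (the paper also records the $\phi_{J,1}$ coming from Lemma~\ref{lem:Lambda_J_approach1}), compute $\phi''(0)$, use $\E^\mu[Q]\le\|Q\|_{L^\infty(\mu)}$, and let $t\to\infty$; your one-sided justification $\psi''(0)=\lim_{k\downarrow0}2\psi(k)/k^2$, together with the finiteness of the moment generating function near $k=0$, usefully makes precise the paper's brief ``compare Taylor expansions'' step.
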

\begin{proof}
The variance bounds follow directly from Lemma~\ref{lem:variance_from_growth}
and using the (now centered) Poincar\'e bounds of
Lemmas~\ref{lem:poincare_density},~\ref{lem:Lambda_J_approach1}, and~\ref{lem:Lambda_J_approach2}.
In particular, it holds that  $\Lambda^A(k)-k\E^\mu[\overline A_t]\le\phi(k)$
for the respective admissible range of $k$ set by the parameters and $\phi(0)=\phi'(0)=0$.

Using $\E^\mu[\overline\rho_t]=\E^\mu[V]$ for density observables, and
$\E^\mu[\overline J_t]=\E^\mu[W]$ for current observables, we identify
\begin{align}
\phi_\rho(k)&\equiv \frac{k^2\mathrm{Var}_\mu(V)}{\lambda_\mathrm{gap}-k\|\bar V\|_{L^\infty(\mu)}} 
\geq \Lambda^\rho(k)-k\E^\mu[V],
\\
\phi_{J,1}(k)&\equiv \frac{k^2\mathrm{Var}_\mu(W)}{\lambda_\mathrm{gap}-k\|\bar W\|_{L^\infty(\mu)}}
      +k^2\|Q\|_{L^\infty(\mu)}
\geq \Lambda^J(k)-k\E^\mu[W],
\\
\phi_{J,2}(k)&\equiv k^2\E^\mu[Q]
     +\frac{k^2\bigl(\sqrt{\mathrm{Var}_\mu(W)}+k\sqrt{\mathrm{Var}_\mu(Q)}\bigr)^2}
           {\lambda_\mathrm{gap}-k\|\bar W\|_{L^\infty(\mu)}-k^2\|\bar Q\|_{L^\infty(\mu)}}
\geq \Lambda^J(k)-k\E^\mu[W].
\end{align}
Taking the second derivative and evaluation at $k=0$ yields
\begin{align}
\phi_\rho''(0)
&=2 \frac{\mathrm{Var}_\mu(V)}{\lambda_\mathrm{gap}},
\\
\phi_{J,1}''(0)
&=2 \|Q\|_{L^\infty(\mu)}+2 \frac{\mathrm{Var}_\mu(W)}{\lambda_\mathrm{gap}},
\\
\phi_{J,2}''(0)
&=  2\E^\mu[Q]+2 \frac{\mathrm{Var}_\mu(W)}{\lambda_\mathrm{gap}}.
\end{align}
Since $\E^\mu[Q]\le\|Q\|_{L^\infty(\mu)}$, plugging these results into Lemma~\ref{lem:variance_from_growth}
proves the variance upper bounds and the corresponding asymptotic bounds follow by taking $t\to\infty$.
\end{proof}

\section{Application: Non-asymptotic uncertainty quantification}
\label{sec:uq}
The concentration inequalities of
Sections~\ref{sec:conc_bound_general} and~\ref{sec:conc_poincare} bound the (tail)
probability of a given deviation $a$ at a given finite observation time $t$. 
From a practical perspective, however, the question is usually posed the other way around.
Suppose a time-averaged observable $\overline{A}_t$ has been recorded along a single trajectory of length $t$ (or, say, for a sample of $n$ independent trajectories),
and one is interested in quantifying how much this measured value or the sample mean over the $n$ independent trajectories) 
deviates from the unknown true stationary mean $\E^\mu[\overline{A}_t]$
with some desired level of confidence.
Equally, one may ask how long the trajectory needs to be for
the estimate $\overline{A}_t$ to have a specified accuracy. 
In this section we invert the explicit bounds of Section~\ref{sec:conc_poincare} 
to construct non-asymptotic performance guarantees that allow us to assess and control the uncertainty 
in the inferred additive functionals at any time and any sample size.

Concretely, we introduce \emph{non-asymptotic confidence intervals} 
such that the true mean $\E^\mu[\overline{A}_t]$ lies within a 
distance or radius $r(\alpha,t)$ from the observed estimate $\overline{A}_t$, i.e., 
with a probability of at least $1-\alpha$ we find
$\E^\mu[\overline{A}_t]\in\bigl[\overline{A}_t- r(\alpha,t), \overline{A}_t +r(\alpha,t)\bigr]$
for every $t>0$ (Theorem~\ref{thm:ci}) and for a sample mean over $n$ independent realizations for every $n\ge 1$ (Proposition~\ref{prop:sample_mean} and Corollary~\ref{cor:ci_sample_mean}).
Moreover, we introduce the required \emph{minimal observation time}
as an explicit threshold $\tmin(\varepsilon,\alpha)$ such that for all $t \geq
\tmin(\varepsilon,\alpha)$ we have $r(\alpha,t) \leq \varepsilon$, i.e.,
after this time the deviation of the estimate does not
exceed a specified tolerance $\varepsilon$ with a confidence level $1-\alpha$ (Theorem~\ref{thm:tmin}).
At an operational level this makes precise of what constitutes a ``sufficiently long'' trajectory. 
We then extend the result to sample-mean deviations and define correspondingly 
"sufficiently many independent realizations" given the trajectory length (Theorem~\ref{thm:min_sample}).
$\overline{A}_t$ is a statistical estimator of
$\E^\mu[\overline{A}_t]$ and the following results provide a 
finite-time and finite-sample framework for evaluating its uncertainties that requires no asymptotic arguments or Gaussian approximations,
and no knowledge of the underlying process (i.e., not the drift $b(x)$ nor transition density $p(x,t)$) beyond a handful of (accessible) parameters.

We remark that in the following discussion we treat the empirical density and the
empirical current simultaneously. This is possible because the explicit
bounds in Section~\ref{sec:conc_poincare},
i.e.~Corollary~\ref{cor:bernstein_density} for the density and
Theorems~\ref{thm:explicit_current_1}~and~\ref{thm:explicit_current_2}
for the current, share the same abstract sub-gamma structure, and the
difference in how different additive functionals concentrate is
manifested in the effective parameters $\tilde\sigma_A^2$ and $c_A$. 
Moreover, we note that the result addressing uncertainty quantification is in fact new for both observables.
Although concentration bounds for density-type additive functionals
have been available in the literature, they have, to the best of
our knowledge, not been used to construct explicit confidence intervals
and minimal observation times before. We therefore show the
inversion from tail bounds to confidence intervals at an abstract level
once by using the parameters $\tilde\sigma_A^2$, and $c_A$ and
specialize to the respective observable later.

To this end we briefly recall the general version of the two-sided bounds. 
By Corollaries~\ref{cor:two_sided_density},~\ref{cor:two_sided_current} and~\ref{cor:two_sided_current_explicit}, for each
observable $\overline{A}_t \in \{\overline{\rho}_t(V),
\overline{J}_t(U)\}$ and corresponding parameters $\tilde\sigma_A^2$ and $c_A$ 
we have, for all $t>0$ and $a>0$,
\begin{align}
  \mathbb{P}^\nu\left(\left|\overline{A}_t -
  \E^\mu[\overline{A}_t]\right| \geq a\right)
  \leq
  2\left\|\frac{\dd\nu}{\dd\mu}\right\|_{L^2(\mu)}
  \exp\left[-t\frac{\tilde\sigma_A^2}{c_A^2}
  h\left(\frac{c_A a}{\tilde\sigma_A^2}\right)\right],
  \label{eq:uq_general}
\end{align}
with $h(u) = 1 + u - \sqrt{1+2u}$.

\subsection{Non-asymptotic confidence intervals}
\label{sec:uq_ci}
To go from a tail bound~\eqref{eq:uq_general} to a confidence
interval we have to invert the respective concentration inequality.
For a given acceptable error probability $\alpha$ and
observation time $t$ we search for the smallest deviation $a$ at which the
right-hand side of Eq.~\eqref{eq:uq_general} drops below $\alpha$. 
Importantly, the sub-gamma function $h$ can be inverted in closed form 
(see also~\cite{boucheron2013concentration}) such that the inversion 
comes at no additional cost, i.e., the confidence bound is as tight as the underlying concentration bound.
Moreover, because the bound is two-sided and symmetric in both tails, a single inversion already 
gives the full interval.
This different to non-asymptotic confidence intervals for empirical first-passage times~\cite{bebon2023controlling},
where the asymmetric (more complicated) structure of the left tail 
does not allow an explicit inversion, such that the confidence interval needs to be defined only 
implicitly and an explicit radius could be only given for the right tail.

\begin{lemma}[Inverse sub-gamma function]
\label{lem:h_inverse}
The characteristic sub-gamma function $h(u) = 1 + u - \sqrt{1+2u}$ 
is a strictly increasing
function from $[0,\infty)$ onto $[0,\infty)$ with inverse
\begin{align}
  h^{-1}(v) = v + \sqrt{2v},
  \qquad 
  v\geq 0.
  \label{eq:h_inverse}
\end{align}
Consequently, for any $v \geq 0$,
$h(u) \geq v$ if and only if $u \geq v + \sqrt{2v}$.
\end{lemma}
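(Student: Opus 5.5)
The plan is to treat $h$ directly as an elementary real function on $[0,\infty)$, first establishing monotonicity and the range, and then solving $h(u)=v$ explicitly. The first step is continuity and strict monotonicity. The map $h(u)=1+u-\sqrt{1+2u}$ is continuous on $[0,\infty)$, smooth on $(0,\infty)$, and has derivative
\begin{align*}
h'(u)=1-\frac{1}{\sqrt{1+2u}} .
\end{align*}
This derivative is strictly positive for $u>0$, since $\sqrt{1+2u}>1$ there. Hence $h$ is strictly increasing on $[0,\infty)$.

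The second step is the range. We have $h(0)=0$, and
\begin{align*}
h(u)\geq u+1-\sqrt{1+2u}\to\infty \quad\text{as } u\to\infty ,
\end{align*}
because $\sqrt{1+2u}=o(u)$. By the intermediate value theorem, $h$ therefore maps $[0,\infty)$ bijectively onto $[0,\infty)$.

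The third step computes the inverse. I will set $s=\sqrt{1+2u}\geq1$, so that $u=(s^2-1)/2$. This gives
\begin{align*}
h=1+\frac{s^2-1}{2}-s=\frac{(s-1)^2}{2} .
\end{align*}
Solving $h(u)=v$ with the constraint $s\geq1$ yields $s=1+\sqrt{2v}$, and then
\begin{align*}
u=\frac{(1+\sqrt{2v})^2-1}{2}=v+\sqrt{2v} ,
\end{align*}
which is exactly Eq.~\eqref{eq:h_inverse}. The substitution also gives a cleaner route to the first step: $h=(s-1)^2/2$ is strictly increasing in $s\geq1$, and $s$ is strictly increasing in $u$. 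I may use this form instead of the derivative.

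The final equivalence, $h(u)\geq v$ if and only if $u\geq h^{-1}(v)=v+\sqrt{2v}$, then follows by applying the strictly increasing inverse $h^{-1}$ to both sides, and conversely applying $h$. No step here is a real obstacle. The only point needing care is choosing the root $s=1+\sqrt{2v}$ rather than $1-\sqrt{2v}$. That choice is forced by $s\geq1$, and this is what makes the inverse single-valued.
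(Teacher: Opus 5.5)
Your proof is correct and follows essentially the same route as the paper. Both use calculus for monotonicity, $h(0)=0$ together with $h(u)\to\infty$ for the range, and the perfect-square identity $1+2(v+\sqrt{2v})=(1+\sqrt{2v})^2$ for the inverse; the only difference is that you derive the inverse constructively via $s=\sqrt{1+2u}$, $h=(s-1)^2/2$ (which also yields strict monotonicity more directly than the paper's $h'\geq 0$), whereas the paper simply verifies $h(v+\sqrt{2v})=v$.
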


\begin{proof}
The monotonicity and convexity properties on $[0,\infty)$ follow directly from
$h'(u) = 1 - (1+2u)^{-1/2} \geq 0$ and $h''(u) = (1+2u)^{-3/2} > 0$,
with $h(0)=0$ and $h(u)\to\infty$ as $u\to\infty$. 
To obtain the inverse, insert
$u = v + \sqrt{2v}$ into $1+2u$, yielding
$1 + 2v + 2\sqrt{2v} = (1+\sqrt{2v})^2$.
Therefore, $h(v+\sqrt{2v}) = 1 + v + \sqrt{2v} - (1+\sqrt{2v}) = v$.
\end{proof}

\begin{theorem}[Non-asymptotic confidence intervals]
\label{thm:ci}
Under assumption~\ref{ass:dynamics}, let $\dd\nu/\dd\mu \in
L^2(\mu)$, and let $\tilde\sigma_A^2$ and $c_A$ be sub-gamma parameters for which
Eq.~\eqref{eq:uq_general} holds with $\overline{A}_t \in
\{\overline{\rho}_t(V), \overline{J}_t(U)\}$. 
Fix a confidence level $1-\alpha$, with $\alpha\in (0,1)$, and define
\begin{align}
  r_A(\alpha,t)
  =
  \sqrt{\frac{2\tilde\sigma_A^2}{t}\log\left(\frac{2N_\nu}{\alpha}\right)}
  +
  \frac{c_A}{t}\log\left(\frac{2N_\nu}{\alpha}\right),
  \quad
  N_\nu \equiv \left\|\frac{\dd\nu}{\dd\mu}\right\|_{L^2(\mu)},
  \label{eq:ci_radius}
\end{align}
where $N_\nu$ is a prefactor encoding the initial condition with $N_\mu=1$.
Then for every $t > 0$,
\begin{align}
  \mathbb{P}^\nu\left(-r_A(\alpha,t) \leq \overline{A}_t - \E^\mu[\overline{A}_t] \leq r_A(\alpha,t)\right)
  \geq 1-\alpha ,
  \label{eq:ci_twosided}
\end{align}
that is, with probability of at least $1-\alpha$, under $\mathbb{P}^\nu$, the stationary mean lies in the confidence interval
\begin{align}
  \E^\mu[\overline{A}_t]
  \in
  \left[\overline{A}_t - r_A(\alpha,t),
  \overline{A}_t + r_A(\alpha,t)\right].
  \label{eq:ci_interval}
\end{align}
Moreover, the corresponding one-sided guarantees hold with a smaller radius $r_A^{(1)}(\alpha,t)$, 
obtained from Eq.~\eqref{eq:ci_radius} by replacing $2N_\nu$ with $N_\nu$, i.e., 
\begin{align}
  \mathbb{P}^\nu\left(\overline{A}_t - \E^\mu[\overline{A}_t]
  \leq r_A^{(1)}(\alpha,t)\right)
  \geq 1-\alpha,
  \qquad
  \mathbb{P}^\nu\left(\E^\mu[\overline{A}_t] -  \overline{A}_t
  \leq r_A^{(1)}(\alpha,t)\right)
  \geq 1-\alpha.
  \label{eq:ci_onesided}
\end{align}
\end{theorem}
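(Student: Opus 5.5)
We prove Theorem~\ref{thm:ci} by inverting the two-sided sub-gamma tail bound \eqref{eq:uq_general} with Lemma~\ref{lem:h_inverse}. The task is to find a deviation $a=r_A(\alpha,t)$ at which the right-hand side of \eqref{eq:uq_general} equals $\alpha$; taking complements then gives \eqref{eq:ci_twosided}. Rearranging the event $|\overline{A}_t-\E^\mu[\overline{A}_t]|\le r_A$ gives the interval statement \eqref{eq:ci_interval}.

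Concretely, we fix $\alpha\in(0,1)$ and $t>0$, and set $L\equiv\log(2N_\nu/\alpha)$. Note that $L>0$, since $N_\nu\ge \|\dd\nu/\dd\mu\|_{L^1(\mu)}=1$ and $\alpha<1$. We require
\begin{align*}
2N_\nu\exp\Bigl[-t\tfrac{\tilde\sigma_A^2}{c_A^2}h\bigl(\tfrac{c_A a}{\tilde\sigma_A^2}\bigr)\Bigr]\le\alpha ,
\end{align*}
which is equivalent to $h(u)\ge v$ with $u=c_Aa/\tilde\sigma_A^2$ and $v=c_A^2L/(t\tilde\sigma_A^2)$. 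By Lemma~\ref{lem:h_inverse} this holds if and only if $u\ge v+\sqrt{2v}$, that is,
\begin{align*}
a\ge \frac{\tilde\sigma_A^2}{c_A}\Bigl(\frac{c_A^2L}{t\tilde\sigma_A^2}+\sqrt{\frac{2c_A^2L}{t\tilde\sigma_A^2}}\Bigr)=\frac{c_A L}{t}+\sqrt{\frac{2\tilde\sigma_A^2L}{t}} .
\end{align*}
The right-hand side is exactly $r_A(\alpha,t)$ from \eqref{eq:ci_radius}. Since the tail bound holds for every $a>0$ and $r_A>0$, choosing $a=r_A$ yields $\mathbb{P}^\nu(|\overline{A}_t-\E^\mu[\overline{A}_t]|\ge r_A)\le\alpha$. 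Passing to the complement gives \eqref{eq:ci_twosided}; the complement has a strict inequality inside, and including the boundary only enlarges the event.

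The one-sided statements follow the same way from the one-sided bounds. For the right tail we use Theorems~\ref{thm:explicit_density}, \ref{thm:explicit_current_1} and~\ref{thm:explicit_current_2}; for the left tail we use Corollaries~\ref{cor:two_sided_density} and~\ref{cor:two_sided_current_explicit}, whose explicit forms carry the same constants. These bounds have prefactor $N_\nu$ instead of $2N_\nu$, so the identical inversion with $L^{(1)}=\log(N_\nu/\alpha)$ produces $r_A^{(1)}$. If $N_\nu/\alpha\le 1$, the condition $h(u)\ge v$ is trivial and any $a>0$ works, so we only note that the formula remains meaningful wherever $L^{(1)}>0$.

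The main obstacle is the degenerate case $c_A=0$, which occurs for example under detailed balance in Approach~1. There the expression $\tfrac{\tilde\sigma_A^2}{c_A^2}h(\cdot)$ must be read as its limit $a^2/(2\tilde\sigma_A^2)$, since $h(u)\sim u^2/2$ as $u\to0$. We would handle this case directly: solving $ta^2/(2\tilde\sigma_A^2)\ge L$ gives $a=\sqrt{2\tilde\sigma_A^2L/t}$, which agrees with \eqref{eq:ci_radius} at $c_A=0$. Apart from this, the argument is only bookkeeping of the monotonicity of $h$.
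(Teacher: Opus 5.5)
Your proof is correct and is essentially the paper's argument. You require the right-hand side of \eqref{eq:uq_general} to be at most $\alpha$, reduce this to $h(c_Aa/\tilde\sigma_A^2)\ge v$, invert with Lemma~\ref{lem:h_inverse} to get $a\ge r_A(\alpha,t)$, and repeat with prefactor $N_\nu$ for the one-sided radii. Your additions are sound points the paper leaves implicit: positivity of $\log(2N_\nu/\alpha)$ via $N_\nu\ge 1$, reading $c_A=0$ as the Gaussian limit, and inverting the sub-gamma rather than the Bernstein one-sided bounds. The one exception is your case $N_\nu/\alpha\le 1$, which is vacuous since $N_\nu\ge 1>\alpha$.
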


\begin{proof}
Using the two-sided bound of Eq.~\eqref{eq:uq_general} it suffices to show that 
the upper bound is at most equal to $\alpha$ for $a = r_A(\alpha,t)$. 
Therefore, we use the two-sided concentration inequality and set 
$2N_\nu\exp[-t(\tilde\sigma_A^2/c_A^2)h(c_A a/\tilde\sigma_A^2)]
\leq \alpha$, which is equivalent to
\begin{align}
  h\left(\frac{c_A a}{\tilde\sigma_A^2}\right)
  \geq
  \frac{c_A^2}{\tilde\sigma_A^2 t}
  \log\left(\frac{2N_\nu}{\alpha}\right)
  \equiv v.
\end{align}
By Lemma~\ref{lem:h_inverse} this holds if and only if
$c_A a/\tilde\sigma_A^2 \geq v + \sqrt{2v}$, i.e.,
\begin{align}
  a
  \geq
  \frac{\tilde\sigma_A^2}{c_A}v
  + \frac{\tilde\sigma_A^2}{c_A}\sqrt{2v}
  =
  \frac{c_A}{t}\log\left(\frac{2N_\nu}{\alpha}\right)
  + \sqrt{\frac{2\tilde\sigma_A^2}{t}
  \log\left(\frac{2N_\nu}{\alpha}\right)}
  \equiv r_A(\alpha,t),
\end{align}
where the first equality follows using the definition of $v$ from above.
By construction the event $|\overline{A}_t - \E^\mu[\overline{A}_t]| \geq
r_A(\alpha,t)$ thus has a probability of at most $\alpha$ and
which completes the proof of Eqs.~\eqref{eq:ci_twosided} and~\eqref{eq:ci_interval}.
The one-sided guarantees in Eq.~\eqref{eq:ci_onesided} follow
analogously with the one-sided bounds of
Theorems~\ref{thm:explicit_current_1}~and~\ref{thm:explicit_current_2}
and Corollary~\ref{cor:bernstein_density} 
with the prefactor $N_\nu$ instead of $2N_\nu$.
\end{proof}

\subsection{Minimal observation times}
\label{sec:uq_tmin}
The confidence intervals of Theorem~\ref{thm:ci} answer the question of
given a trajectory of length $t$, how accurate the estimate $\overline{A}_t$ is.
Here, we now answer the complementary direction of given a desired accuracy (or tolerance)
$\varepsilon$ and confidence level $1-\alpha$, how long does the trajectory need to be?
In particular, since $r_A(\alpha,t)$ is strictly decreasing in $t$, this minimal time 
is found as the solution of setting $r_A(\alpha,t) = \varepsilon$.

\begin{theorem}[Minimal observation time]
\label{thm:tmin}
Under the assumptions of Theorem~\ref{thm:ci}, we fix a desired accuracy (or tolerance)
$\varepsilon>0$ at a confidence level $1-\alpha$, with $\alpha\in(0,1)$, and define
\begin{align}
  \tminA(\varepsilon,\alpha)
  \equiv
  \frac{1}{2\varepsilon^2}
  \left(\sqrt{\tilde\sigma_A^2}+ \sqrt{\tilde\sigma_A^2 + 2 c_A \varepsilon}
  \right)^{2}
  \log\left(\frac{2N_\nu}{\alpha}\right).
  \label{eq:tmin}
\end{align}
Then $r_A(\alpha,t) \leq \varepsilon$ if and only if $t \geq
\tminA(\varepsilon,\alpha)$.
In particular, for every $t \geq \tminA(\varepsilon,\alpha)$, it holds that
\begin{align}
  \mathbb{P}^\nu\left( -\varepsilon \leq \overline{A}_t - \E^\mu[\overline{A}_t]\leq \varepsilon\right)\geq 1 - \alpha.
  \label{eq:tmin_guarantee}
\end{align}
Moreover, the minimal time $\tminA$ admits the simple two-sided estimate
\begin{align}
  \frac{2\tilde\sigma_A^2}{\varepsilon^2}\log\left(\frac{2N_\nu}{\alpha}\right)
  \leq
  \tminA(\varepsilon,\alpha)
  \leq
  \left(\frac{2\tilde\sigma_A^2}{\varepsilon^2}+ \frac{2 c_A}{\varepsilon}\right)
  \log\left(\frac{2N_\nu}{\alpha}\right),
  \label{eq:tmin_simple}
\end{align}
and the corresponding one-sided guarantees of Eq.~\eqref{eq:ci_onesided} hold at
tolerance $\varepsilon$ for all $t \geq \tminA^{(1)}(\varepsilon,\alpha)$, where
$\tminA^{(1)}(\varepsilon,\alpha)$ is obtained
from Eq.~\eqref{eq:tmin} by replacing $2N_\nu$ with $N_\nu$.
\end{theorem}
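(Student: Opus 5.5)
The plan is to treat $r_A(\alpha,t)$ from Theorem~\ref{thm:ci} as a function of $s\equiv 1/\sqrt{t}$ and reduce the condition $r_A\le\varepsilon$ to a quadratic inequality. Abbreviate $L\equiv\log(2N_\nu/\alpha)>0$; this is positive because $N_\nu\ge 1$ (Cauchy--Schwarz gives $\|\dd\nu/\dd\mu\|_{L^2(\mu)}\ge\|\dd\nu/\dd\mu\|_{L^1(\mu)}=1$) and $\alpha<1$. Then
\begin{align*}
r_A(\alpha,t)=\sqrt{2\tilde\sigma_A^2L}\,s+c_AL\,s^2 .
\end{align*}
This is a strictly increasing function of $s>0$ because $c_A\ge0$. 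Hence $r_A$ is strictly decreasing in $t$, and it tends to $0$ as $t\to\infty$ and to $\infty$ as $t\to0$.

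First I will solve $c_ALs^2+\sqrt{2\tilde\sigma_A^2L}\,s-\varepsilon=0$ for its unique positive root. Rationalizing the quadratic formula gives
\begin{align*}
s_\ast=\frac{2\varepsilon}{\sqrt{2\tilde\sigma_A^2L}+\sqrt{2\tilde\sigma_A^2L+4c_AL\varepsilon}}
=\frac{\sqrt{2}\,\varepsilon}{\sqrt{L}\bigl(\sqrt{\tilde\sigma_A^2}+\sqrt{\tilde\sigma_A^2+2c_A\varepsilon}\bigr)} .
\end{align*}
The rationalized form also covers $c_A=0$, where the equation is linear. By monotonicity, $r_A\le\varepsilon$ holds iff $s\le s_\ast$, i.e.\ iff $t\ge s_\ast^{-2}$. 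Squaring the reciprocal yields exactly Eq.~\eqref{eq:tmin}. The guarantee~\eqref{eq:tmin_guarantee} then follows directly from Eq.~\eqref{eq:ci_twosided}, since for $t\ge\tminA$ the interval $[-r_A,r_A]$ is contained in $[-\varepsilon,\varepsilon]$.

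For the two-sided estimate~\eqref{eq:tmin_simple} I will bound $(\sqrt{x}+\sqrt{x+y})^2$ with $x=\tilde\sigma_A^2$ and $y=2c_A\varepsilon$. The lower bound comes from $\sqrt{x+y}\ge\sqrt{x}$, which gives $(\sqrt{x}+\sqrt{x+y})^2\ge4x$ and hence $\tminA\ge 2\tilde\sigma_A^2L/\varepsilon^2$. For the upper bound, apply $(p+q)^2\le2p^2+2q^2$ to get $(\sqrt{x}+\sqrt{x+y})^2\le 2x+2(x+y)=4x+2y$. This gives $\tminA\le(2\tilde\sigma_A^2/\varepsilon^2+2c_A/\varepsilon)L$.

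The one-sided statement repeats the same argument with $L$ replaced by $\log(N_\nu/\alpha)$, using the radius $r_A^{(1)}$ and Eq.~\eqref{eq:ci_onesided}. There is no real obstacle here. The only care points are the degenerate case $c_A=0$, which the rationalized root handles, and positivity of $L$ in the one-sided case. If $N_\nu/\alpha\le1$ could occur there, $r_A^{(1)}\le0$ and the statement is trivial or vacuous, so I will assume $\log(N_\nu/\alpha)>0$ as implicitly required.
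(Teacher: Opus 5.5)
Your proof is correct and follows essentially the same route as the paper: the substitution $s=1/\sqrt{t}$ (the paper uses $u=\sqrt{\log(2N_\nu/\alpha)/t}$) turns $r_A(\alpha,t)=\varepsilon$ into a quadratic whose rationalized positive root gives Eq.~\eqref{eq:tmin}, monotonicity of $r_A$ in $t$ gives the equivalence, and the two-sided estimate comes from the same elementary bound $4x\le(\sqrt{x}+\sqrt{x+y})^2\le 4x+2y$. Your caveat in the one-sided case is unnecessary, because $N_\nu\ge 1>\alpha$ (which you already established) forces $\log(N_\nu/\alpha)>0$.
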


\begin{proof}
By the substitution $u \equiv \sqrt{\log(2N_\nu/\alpha)/t}$ the 
required equation $r_A(\alpha,t) = \varepsilon$ becomes a quadratic equation
$c_A u^2 + u\sqrt{2\tilde\sigma_A^2} - \varepsilon = 0$ in $u \geq 0$,
with a unique non-negative root given by
\begin{align}
  u^\ast
  = \frac{\sqrt{2\tilde\sigma_A^2 + 4c_A\varepsilon}
  - \sqrt{2\tilde\sigma_A^2}}{2c_A}
  = \frac{2\varepsilon}{\sqrt{2\tilde\sigma_A^2 + 4c_A\varepsilon}+ \sqrt{2\tilde\sigma_A^2}},
\end{align}
where the second expression follows by rationalizing. 
Therefore using $u^\ast$ in the substitution and solving for $t$
gives the minimal observation time 
\begin{align}
\tminA = \frac{\log\left(\frac{2N_\nu}{\alpha}\right)}{{u^\ast}^2} 
= \log\left(\frac{2N_\nu}{\alpha}\right)
\frac{\left(\sqrt{2\tilde\sigma_A^2+4c_A\varepsilon} + \sqrt{2\tilde\sigma_A^2}\right)^2}{4\varepsilon^2},
\end{align}
which gives Eq.~\eqref{eq:tmin}. 
The monotonicity of $r_A(\alpha,t)$ in $t$ gives the stated equivalence
$r_A(\alpha,t) \leq \varepsilon$ if and only if $t \geq
\tminA(\varepsilon,\alpha)$, and 
Eq.~\eqref{eq:tmin_guarantee} follows directly from Theorem~\ref{thm:ci}. 
The two-sided sandwich bound follows from
the elementary inequality $4x \leq (\sqrt{x}+\sqrt{x+y})^2\leq  4x+2y$,
for $x\geq0$ and $y\geq 0$. The upper bound follows from the AM-GM inequality and
the lower bound follows since $x\leq x+y$.
Taking $x=\tilde\sigma_A^2$ and $y=2c_A\varepsilon$
yields Eq.~\eqref{eq:tmin_simple}.
Corresponding one-sided statements
follow from the analogous approach using the one-sided radius
$r_A^{(1)}(\alpha,t)$ in Theorem~\ref{thm:ci}.
\end{proof}

Since under detailed balance the
exponential tail parameter of Approach~1 for current-type functionals 
vanishes (Corollary~\ref{cor:db_subgaussian} and Remark~\ref{rem:db_bounds}),
the confidence radius loses its
(heavier tail) $1/t$ correction and reads 
$r_J(\alpha,t)=\sqrt{2\tilde{\sigma}^2_{J,1} \log(2N_\nu/\alpha)/t}$.
Moreover, $\tilde{\sigma}^2_{J,1}=2 \|Q \|_{L^\infty(\mu)}$ such that
the minimal observation time becomes fully explicit and reads
\begin{align*}
    t_{\mathrm{min},J}(\varepsilon,\alpha) = \frac{4 \|Q \|_{L^\infty(\mu)}}{\varepsilon^2} \log\left(\frac{2N_\nu}{\alpha}\right).
\end{align*}
While under detailed balance this minimal trajectory length scales as $\varepsilon^{-2}$, out of equilibrium an additional
$c_{J,1}/\varepsilon$ (exponential) contribution needs to be accounted for.
With $\Theta_J(\varepsilon)=2\tilde{\sigma}^2_{J,1}/\varepsilon^2$
for detailed balance an analog simplification 
can be applied to the minimal sample size of Theorem~\ref{thm:min_sample}.

\subsection{Sample means over independent trajectories}
\label{sec:uq_ensemble}
Our preceding results quantify the uncertainty of an estimate inferred from a
\emph{single} trajectory of length $t$. 
In many settings one instead has access to a small number of
independent realizations of the dynamics (e.g., $n$ repetitions of an experiment
or $n$ parallel simulations), where each sample path is typically started from the same initial law $\nu$ 
and measured for the same duration $t$.
Hereby, writing $\overline{A}_t^{(1)},\dots,\overline{A}_t^{(n)}$ for the
corresponding $n$ independent copies of $\overline{A}_t$ under $\mathbb{P}^\nu$, 
the estimator of the stationary mean becomes the \emph{sample mean of time-averages}, i.e., 
\begin{align}
  \widehat{A}_{n,t}
  \equiv
  \frac{1}{n}
  \sum_{i=1}^{n}\overline{A}_t^{(i)}.
  \label{eq:sample_mean}
\end{align}
The relevant practical question now is, for a given accuracy and confidence level,  how many trajectories are required at a fixed observation time $t$.
Moreover, assuming we have a fixed total amount $T\equiv nt$, is it best to have a single long trajectory or many individual ones?
Indeed, the answer depends on whether
the dynamics is initialized at stationary.
As before, the cost to start away from the steady-state is encoded by the prefactor
$N_\nu = \|\dd\nu/\dd\mu\|_{L^2(\mu)}\geq 1$ with equality if and only if $\nu = \mu$.

\begin{proposition}[Concentration of the sample mean]
\label{prop:sample_mean}
Let the assumptions of Theorems~\ref{thm:explicit_density},~\ref{thm:explicit_current_1}
and~\ref{thm:explicit_current_2} hold, with the corresponding sub-gamma parameters
$\tilde\sigma_A^2$ and $c_A$ established there for $\overline{A}_t \in
\{\overline{\rho}_t(V), \overline{J}_t(U)\}$. Then for every $n \geq 1$, $t>0$, and
$a>0$, the sample mean $\widehat{A}_{n,t}$ satisfies the one-sided bound
\begin{align}
  \mathbb{P}^\nu\left(\widehat{A}_{n,t} - \E^\mu[\overline{A}_t] \geq a\right)
  \leq
  N_\nu^{n}
  \exp\left[-nt\frac{\tilde\sigma_A^2}{c_A^2}
  h\left(\frac{c_A a}{\tilde\sigma_A^2}\right)\right],
  \label{eq:sample_mean_onesided}
\end{align}
and the corresponding two-sided bound
\begin{align}
  \mathbb{P}^\nu\left(\left|\widehat{A}_{n,t} 
  - \E^\mu[\overline{A}_t]\right| \geq a\right)
  \leq
  2 N_\nu^{n}
  \exp\left[-nt\frac{\tilde\sigma_A^2}{c_A^2}
  h\left(\frac{c_A a}{\tilde\sigma_A^2}\right)\right].
  \label{eq:sample_mean_tail}
\end{align}
Here, $\mathbb{P}^\nu$ now denotes the underlying initial law of the $n$ independent trajectories. 
In particular, $\widehat{A}_{n,t}$ obeys the single-trajectory bounds
($n=1$) of Section~\ref{sec:conc_poincare}
and also Eq.~\eqref{eq:uq_general} by replacing $t \to nt$ and $N_\nu \to N_\nu^n$.
\end{proposition}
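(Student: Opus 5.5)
The plan is to run the Cram\'er--Chernoff argument once more, now for the sum $n\widehat{A}_{n,t}=\sum_{i=1}^n A_t^{(i)}$ with $A_t^{(i)}=t\overline{A}_t^{(i)}$. Independence of the trajectories will factorize the moment generating function. For $k>0$, Chernoff's inequality gives
\begin{align*}
  \mathbb{P}^\nu\left(\widehat{A}_{n,t}-\E^\mu[\overline{A}_t]\geq a\right)
  \leq \e^{-knt(\E^\mu[\overline{A}_t]+a)}\,\E^\nu\Bigl[\e^{k\sum_{i}A_t^{(i)}}\Bigr]
  = \e^{-knt(\E^\mu[\overline{A}_t]+a)}\prod_{i=1}^n\E^\nu\bigl[\e^{kA_t^{(i)}}\bigr],
\end{align*}
where the last step uses that the $n$ copies are independent and identically distributed under $\mathbb{P}^\nu$.

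Next I would bound each factor by Corollary~\ref{cor:mgf_bound}, which gives $\E^\nu[\e^{kA_t}]\leq N_\nu\e^{t\Lambda^A(k)}$. The right-hand side above is then at most
\begin{align*}
  N_\nu^n\exp\bigl[-nt\bigl(k(\E^\mu[\overline{A}_t]+a)-\Lambda^A(k)\bigr)\bigr].
\end{align*}
So $N_\nu$ becomes $N_\nu^n$ and $t$ becomes $nt$, while the bracket in the exponent is exactly the same as in the single-trajectory case.

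To finish the one-sided bound, I would insert the centered sub-gamma estimate $\Lambda^A(k)-k\E^\mu[\overline{A}_t]\leq \tilde\sigma_A^2k^2/[2(1-c_Ak)]$ for $0\leq k<1/c_A$. For the density this is Theorem~\ref{thm:explicit_density} via Lemma~\ref{lem:poincare_density}; for the current it is Lemma~\ref{lem:Lambda_J_approach1} (Approach 1) or Lemma~\ref{lem:bernstein_current} (Approach 2). I would then optimize over $k$ using Lemma~\ref{lem:subgamma}. This gives the exponent $nt(\tilde\sigma_A^2/c_A^2)h(c_Aa/\tilde\sigma_A^2)$, and hence Eq.~\eqref{eq:sample_mean_onesided}.

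For the left tail I would apply the same argument to $-\widehat{A}_{n,t}$, i.e.\ replace $V\mapsto -V$ or $U\mapsto -U$. As in Corollaries~\ref{cor:two_sided_density} and~\ref{cor:two_sided_current_explicit}, this leaves $\tilde\sigma_A^2$ and $c_A$ unchanged, since they depend only on variances, centered sup-norms and $Q$. A union bound then gives Eq.~\eqref{eq:sample_mean_tail} with the factor $2$.

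I do not expect a genuine obstacle. The only points needing care are the following.
\begin{itemize}
\item The independence factorization must be justified by constructing the $n$ trajectories on a product space with initial law $\nu^{\otimes n}$.
\item The admissible range $k<1/c_A$ must be identical for every factor, which holds automatically because a single $k$ is used throughout.
\item The degenerate case $c_A=0$ (detailed balance, Approach 1) must be read as the limit $h(u)\,\tilde\sigma^2/c^2\to a^2/(2\tilde\sigma^2)$, which follows from Corollary~\ref{cor:db_subgaussian} in the same way.
\end{itemize}
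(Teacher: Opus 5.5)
Your proposal is correct and follows essentially the same route as the paper. You factorize the moment generating function of $n\widehat{A}_{n,t}$ by independence, bound each factor via Corollary~\ref{cor:mgf_bound} and the centered sub-gamma estimates, optimize with Lemma~\ref{lem:subgamma}, and get the left tail from $V\mapsto -V$ or $U\mapsto -U$ plus a union bound. The only difference is bookkeeping: the paper rescales the tilt via $kt=s/n$ and reads off sub-gamma parameters $\tilde\sigma_A^2/(nt)$ and $c_A/(nt)$ for $\widehat{A}_{n,t}$, whereas you keep the single-trajectory tilt $k$, so that $nt$ multiplies the whole exponent; the two are equivalent.
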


\begin{proof}
By Corollary~\ref{cor:mgf_bound}, the time-average of a single trajectory obeys the centered
bound
\begin{align}
  \E^\nu\left[\e^{kt(\overline{A}_t - \E^\mu[\overline{A}_t])}\right]
  \leq
  N_\nu \e^{t(\Lambda^A(k) - k\E^\mu[\overline{A}_t])}.
\end{align}
Bounding the symmetrized principal eigenvalue $\Lambda^A(k)$ via
Lemmas~\ref{lem:poincare_density},~\ref{lem:Lambda_J_approach1}
and~\ref{lem:Lambda_J_approach2}, for the respective observable (with the relaxation of
Lemma~\ref{lem:bernstein_current} for the second current strategy),
yields the characteristic sub-gamma form, for $0 \leq k < 1/c_A$,
\begin{align}
  \E^\nu\left[\e^{kt(\overline{A}_t - \E^\mu[\overline{A}_t])}\right]
  \leq
  N_\nu\exp\left[\frac{t\tilde\sigma_A^2 k^2}{2(1-c_A k)}\right].
  \label{eq:single_mgf}
\end{align}
Introduce $s \geq 0$ as the tilt parameter for the sample mean and set $s \equiv knt$. Due to 
independence of the individual trajectories, for $0 \leq s < nt/c_A$,
\begin{align}
  \E^\nu\left[\e^{s(\widehat{A}_{n,t} - \E^\mu[\overline{A}_t])}\right]
  =
  \prod_{i=1}^{n}
  \E^\nu\left[\e^{(s/n)(\overline{A}_t^{(i)} - \E^\mu[\overline{A}_t])}\right]
  \leq
  N_\nu^{n}\exp\left\{\frac{(\tilde\sigma_A^2/nt) s^2}{2\left[1-(c_A/nt) s\right]}\right\},
\end{align}
where each factor is bounded via Eq.~\eqref{eq:single_mgf}
with the replacement $kt = s/n$. 
Hence $\widehat{A}_{n,t}$ is sub-gamma with
variance proxy $\tilde\sigma_A^2/(nt)$, scale parameter $c_A/(nt)$, and the adapted prefactor
reads $N_\nu^{n}$. 
Applying the Cram\'er bound (Lemma~\ref{lem:subgamma}) to the sub-gamma form of the sample-mean therefore
yields the one-sided upper bound~\eqref{eq:sample_mean_onesided}. 
The corresponding left-tail bound follows analogously from the same arguments now applied to $-\overline{A}_t^{(i)}$,
and a union bound yields the two-sided results with an additional 
factor of $2$ in Eq.~\eqref{eq:sample_mean_tail}.
\end{proof}

Since Proposition~\ref{prop:sample_mean} reproduces the single-trajectory setting
via $n=1$, i.e., the bounds are obtained by the substitutions 
$t \to nt$ and $N_\nu \to N_\nu^{n}$, the uncertainty quantification results of
Sections~\ref{sec:uq_ci} and~\ref{sec:uq_tmin} analogously translate to the sample mean setting.

\begin{corollary}[Confidence interval for the sample mean]
\label{cor:ci_sample_mean}
Let the assumptions of Theorem~\ref{thm:ci} hold. Fix a confidence level $1-\alpha$, with
$\alpha \in (0,1)$, and for $n \geq 1$ define
\begin{align}
  r_A(\alpha,n,t)
  =
  \sqrt{\frac{2\tilde\sigma_A^2}{nt}\left(\log\frac{2}{\alpha} 
  + n\log N_\nu\right)}
  +
  \frac{c_A}{nt}\left(\log\frac{2}{\alpha} + n\log N_\nu\right).
  \label{eq:ci_sample_mean}
\end{align}
Then for every $n \geq 1$ and $t > 0$, it holds that
\begin{align}
  \mathbb{P}^\nu\left(-r_A(\alpha,n,t) \leq \widehat{A}_{n,t} 
  - \E^\mu[\overline{A}_t]
  \leq r_A(\alpha,n,t)\right)
  \geq 1-\alpha,
  \label{eq:ci_sample_mean_twosided}
\end{align}
that is, with probability of at least $1-\alpha$, under $\mathbb{P}^\nu$, the stationary mean
is found in the confidence interval
\begin{align}
  \E^\mu[\overline{A}_t]
  \in
  \left[\widehat{A}_{n,t} - r_A(\alpha,n,t),
  \widehat{A}_{n,t} + r_A(\alpha,n,t)\right].
  \label{eq:ci_sample_mean_interval}
\end{align}
Corresponding one-sided guarantees hold with the smaller radius
$r_A^{(1)}(\alpha,n,t)$ obtained by replacing $\log(2/\alpha)$ with $\log(1/\alpha)$
in Eq.~\eqref{eq:ci_sample_mean}.
\end{corollary}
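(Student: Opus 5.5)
The plan is to run the inversion argument of Theorem~\ref{thm:ci} again, now feeding it the sample-mean tail bound of Proposition~\ref{prop:sample_mean} in place of Eq.~\eqref{eq:uq_general}. By Eq.~\eqref{eq:sample_mean_tail}, $\widehat{A}_{n,t}$ satisfies the same two-sided sub-gamma inequality as a single time average, with the substitutions $t\to nt$ and $N_\nu\to N_\nu^n$. It therefore suffices to find the smallest $a$ for which
\begin{align*}
2N_\nu^{n}\exp\left[-nt\frac{\tilde\sigma_A^2}{c_A^2}h\left(\frac{c_A a}{\tilde\sigma_A^2}\right)\right]\leq\alpha .
\end{align*}

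First, take logarithms. The condition is equivalent to
\begin{align*}
h\left(\frac{c_A a}{\tilde\sigma_A^2}\right)\geq v\equiv\frac{c_A^2}{\tilde\sigma_A^2\,nt}\left(\log\frac{2}{\alpha}+n\log N_\nu\right),
\end{align*}
where I use $\log(2N_\nu^n/\alpha)=\log(2/\alpha)+n\log N_\nu$. Second, invoke Lemma~\ref{lem:h_inverse}, which says $h(u)\ge v$ if and only if $u\ge v+\sqrt{2v}$. Third, multiply through by $\tilde\sigma_A^2/c_A$. The linear term gives $\frac{c_A}{nt}\bigl(\log\frac{2}{\alpha}+n\log N_\nu\bigr)$ and the square-root term gives $\sqrt{\frac{2\tilde\sigma_A^2}{nt}\bigl(\log\frac{2}{\alpha}+n\log N_\nu\bigr)}$. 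Their sum is exactly $r_A(\alpha,n,t)$ in Eq.~\eqref{eq:ci_sample_mean}. Hence the event $|\widehat{A}_{n,t}-\E^\mu[\overline{A}_t]|\ge r_A(\alpha,n,t)$ has probability at most $\alpha$, which gives Eq.~\eqref{eq:ci_sample_mean_twosided}. Rearranging the inequality inside the probability yields the interval~\eqref{eq:ci_sample_mean_interval}.

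For the one-sided versions I will use the one-sided bound~\eqref{eq:sample_mean_onesided} instead, which carries the prefactor $N_\nu^n$ without the factor $2$. Its left-tail analogue follows by applying the same bound to $-\overline{A}_t^{(i)}$, as in Proposition~\ref{prop:sample_mean}. The computation is identical with $\log(2/\alpha)$ replaced by $\log(1/\alpha)$.

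There is no real obstacle beyond bookkeeping. The only points to check are the edge cases. If $c_A=0$ (detailed balance, Approach~1), the bound is purely Gaussian: I would invert $\exp(-nta^2/(2\tilde\sigma_A^2))$ directly, which recovers the formula with the linear term absent. Also, $v\ge0$ always holds, because $N_\nu\ge1$ and $\alpha<1$, so Lemma~\ref{lem:h_inverse} applies.
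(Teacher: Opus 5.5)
Your proposal is correct and follows the paper's proof. You re-run the inversion of Theorem~\ref{thm:ci} on the sample-mean bound of Proposition~\ref{prop:sample_mean} via $t\to nt$, $N_\nu\to N_\nu^n$ and $\log(2N_\nu^n/\alpha)=\log(2/\alpha)+n\log N_\nu$, and you take the one-sided radius from Eq.~\eqref{eq:sample_mean_onesided}. Your explicit handling of the case $c_A=0$ and your check that $v\ge 0$ (from $N_\nu\ge 1$) are small points the paper leaves implicit.
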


\begin{proof}
The results follow by the substitutions $t \to nt$ and $N_\nu \to N_\nu^{n}$ in Theorem~\ref{thm:ci}
and the underlying required bound holds by Proposition~\ref{prop:sample_mean}. 
Note, that the logarithmic factor now reads
$\log(2N_\nu^n/\alpha) = \log(2/\alpha) + n\log N_\nu$.
The one-sided radius follows as before 
from the one-sided bound~\eqref{eq:sample_mean_onesided}.
Here the (single) prefactor $N_\nu^n$ replaces $\log(2/\alpha)$ with $\log(1/\alpha)$.
\end{proof}

\begin{theorem}[Minimal sample size and observation time]
\label{thm:min_sample}
Under the assumptions of Theorem~\ref{thm:ci}, 
we fix a desired accuracy (or tolerance)
$\varepsilon>0$ at a confidence level $1-\alpha$, with $\alpha\in(0,1)$, and introduce
\begin{align}
  \Theta_A(\varepsilon)
  \equiv
  \frac{1}{2\varepsilon^2}
  \left(\sqrt{\tilde\sigma_A^2} 
  + \sqrt{\tilde\sigma_A^2 + 2c_A\varepsilon}\right)^{2},
  \label{eq:Omega}
\end{align}
such that the minimal required time for a single trajectory (Theorem~\ref{thm:tmin})
reads $\tminA(\varepsilon,\alpha) = \Theta_A(\varepsilon)\log(2N_\nu/\alpha)$. 

Then $\mathbb{P}^\nu(-\varepsilon \leq \widehat{A}_{n,t} - \E^\mu[\overline{A}_t] \leq \varepsilon) \geq 1-\alpha$, i.e., 
the confidence radius stays below the tolerance level, $r_A(\alpha,n,t) \leq \varepsilon$, with 
probability of at least $1-\alpha$, if and only if

\begin{align}
  nt \geq \Theta_A(\varepsilon)\left(\log\frac{2}{\alpha} + n\log N_\nu\right).
  \label{eq:uq_full}
\end{align}
This fixes the minimal sample size and the minimal per-trajectory time as follows.

For fixed $t > \Theta_A(\varepsilon)\log N_\nu$, the
guarantee holds if and only if $n \geq n_{\min,A}$, where
\begin{align}
n_{\min,A}(\varepsilon,\alpha,t)
=\left\lceil\frac{\Theta_A(\varepsilon)\log(2/\alpha)}{t - \Theta_A(\varepsilon)\log N_\nu}
\right\rceil.
\label{eq:nmin}
\end{align}
denotes the \emph{minimal sample size}. In the case that $t \leq \Theta_A(\varepsilon)\log N_\nu$, 
no finite number of independent trajectories is sufficient. 
At stationarity $N_\mu = 1$ we recover 
    \begin{align}
      n_{\min,A}(\varepsilon,\alpha,t)
      =\left\lceil\frac{\tminA(\varepsilon,\alpha)}{t}\right\rceil,
      \label{eq:nmin_stationary}
    \end{align}
and the required sample size is simple the minimal time of a single trajectory
(Theorem~\ref{thm:tmin}) divided by the per-trajectory length, and any $t>0$ 
has a finite sample size.

For fixed $n$, Eq.~\eqref{eq:uq_full} holds if and only if $t \geq t_{\min,A}(n,\varepsilon, \alpha)$,
where
\begin{align}
  t_{\min,A}(n, \varepsilon, \alpha)
  =\frac{\Theta_A(\varepsilon)}{n}\left(\log\frac{2}{\alpha} + n\log N_\nu\right).
  \label{eq:tmin_n}
\end{align}
Notably, in the case $n=1$ this recovers $\tminA(\varepsilon,\alpha)$ from 
Eq.~\eqref{eq:tmin}.
The corresponding one-sided guarantees hold under the same conditions with
$\log(2/\alpha)$ replaced by $\log(1/\alpha)$.
\end{theorem}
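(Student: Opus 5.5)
The plan is to reduce Theorem~\ref{thm:min_sample} to the algebra already done for Theorem~\ref{thm:tmin}, using the substitution $t\to nt$, $N_\nu\to N_\nu^n$ supplied by Proposition~\ref{prop:sample_mean} and Corollary~\ref{cor:ci_sample_mean}.

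Write $L_n\equiv\log(2/\alpha)+n\log N_\nu=\log(2N_\nu^n/\alpha)$. Then the sample-mean radius reads
\[
r_A(\alpha,n,t)=\sqrt{2\tilde\sigma_A^2L_n/(nt)}+c_AL_n/(nt).
\]
This is exactly the single-trajectory radius $r_A(\alpha,t)$ of Theorem~\ref{thm:ci} with $t$ replaced by $nt$ and $\log(2N_\nu/\alpha)$ replaced by $L_n$.

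Substitute $u\equiv\sqrt{L_n/(nt)}$. The condition $r_A(\alpha,n,t)\le\varepsilon$ becomes $c_Au^2+\sqrt{2\tilde\sigma_A^2}\,u\le\varepsilon$ with $u\ge0$. The left side is strictly increasing in $u$, so this holds if and only if $u\le u^\ast$, where $u^\ast$ is the positive root computed in the proof of Theorem~\ref{thm:tmin}. Since $(u^\ast)^{-2}=\Theta_A(\varepsilon)$, the condition is equivalent to $nt\ge\Theta_A(\varepsilon)L_n$, which is \eqref{eq:uq_full}. When $r_A\le\varepsilon$, the probability guarantee follows from Corollary~\ref{cor:ci_sample_mean}, because the event $|\widehat A_{n,t}-\E^\mu[\overline A_t]|\le r_A$ is contained in the event $|\cdot|\le\varepsilon$.

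Next, solve \eqref{eq:uq_full} for each variable in turn. The condition is linear in $n$: $n\bigl(t-\Theta_A\log N_\nu\bigr)\ge\Theta_A\log(2/\alpha)$.
\begin{itemize}
\item If $t>\Theta_A\log N_\nu$, divide through and take the integer ceiling to obtain $n_{\min,A}$ as in \eqref{eq:nmin}.
\item If $t\le\Theta_A\log N_\nu$, the left side is nonpositive while the right side is strictly positive, since $\alpha<1$ gives $\log(2/\alpha)>0$. No $n$ satisfies the condition.
\item At stationarity, $N_\mu=1$ gives $\log N_\nu=0$. The condition becomes $nt\ge\Theta_A\log(2/\alpha)=\tminA$, which yields \eqref{eq:nmin_stationary}.
\end{itemize}
For fixed $n$, dividing \eqref{eq:uq_full} by $n$ gives \eqref{eq:tmin_n}, and setting $n=1$ recovers \eqref{eq:tmin}. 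The one-sided versions follow identically from the one-sided bound \eqref{eq:sample_mean_onesided}, with $\log(2/\alpha)$ replaced by $\log(1/\alpha)$.

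The main obstacle is interpretive rather than technical. The "if and only if" must be read as a statement about the sufficient condition $r_A\le\varepsilon$ derived from the bound, not about the true probability. The monotonicity argument in $u$ also needs the edge case $c_A=0$, which occurs under detailed balance in Approach~1. In that case the quadratic degenerates to a linear one, and I would check directly that $(u^\ast)^{-2}=2\tilde\sigma_A^2/\varepsilon^2$, which agrees with $\Theta_A(\varepsilon)$ evaluated at $c_A=0$.
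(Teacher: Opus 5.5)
Your proposal is correct and follows the paper's proof: both apply the substitution $t\to nt$, $N_\nu\to N_\nu^n$ from Proposition~\ref{prop:sample_mean}, reuse the inversion from Theorem~\ref{thm:tmin}, and then solve the resulting condition, which is linear in $n$ and in $t$. Your extra care is sound and goes beyond what the paper writes out: you treat the edge cases $t\le\Theta_A(\varepsilon)\log N_\nu$ and $c_A=0$ explicitly, and you note that the ``if and only if'' really characterizes $r_A(\alpha,n,t)\le\varepsilon$ rather than the probability guarantee itself.
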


\begin{proof}
By Proposition~\ref{prop:sample_mean}, the sample mean 
$\widehat{A}_{n,t}$ satisfies the two-sided bound~\eqref{eq:uq_general} 
with $t \to nt$ and $N_\nu \to N_\nu^n$.
The inversion of Eq.~\eqref{eq:uq_general} in Theorem~\ref{thm:tmin}
therefore gives $r_A(\alpha,n,t) \leq \varepsilon$ 
if and only if $nt \geq \Theta_A(\varepsilon)\log(2N_\nu^{n}/\alpha) =
\Theta_A(\varepsilon)(\log(2/\alpha) + n\log N_\nu)$, which gives Eq.~\eqref{eq:uq_full}.
The minimal sample size and observation time per trajectory follow by 
solving Eq.~\eqref{eq:uq_full} for $n$, for $t$, respectively.
\end{proof}

\section{Conclusion}\label{sec:conclusion}
\subsection{Summary} 
Time-series analysis, in particular time averaging along individual trajectories as a proxy for steady-state
averages, naturally leads to the analysis of additive functionals. Functionals of the Stratonovich-type, also known as ``generalized currents'' in the Physics literature, recently attracted particularly much attention 
in the context of thermodynamic inference.
Experimental constraints and intrinsic sampling limitations lead to a limited (often only a few) trajectories which are short compared to the mixing time of the underlying dynamics. This poses grand challenges on the inference, in particular on the control of uncertainty of estimates. Finite-time fluctuations
are therefore an intrinsic feature of time-averaged observables and asymptotic statements (ergodic theorem or large-deviation limits) alone generally cannot quantify fluctuations in practical applications.

State-of-the-art physical applications critically hinge on a sufficient sampling of higher-order moments of the additive functional, which is challenging to control with the limited information available about the microscopic dynamics. This poses a pressing need for non-asymptotic 
concentration results which for Stratonovich currents are virtually nonexistent. Due to the higher complexity of the corresponding Feynman-Kac semigroup (i.e.\ the "tilt" is not an additive potential), the analysis of the concentration behavior of Stratonovich currents turns out to be much more challenging than the well known results on "classical" Lebesgue-type additive functionals of the density type. 

In our work we prove---to the best of our knowledge for the first time---concentration inequalities for Stratonovich functionals for any bounded, sufficiently smooth vector-valued observable of a general geometrically ergodic diffusion process, including explicit sub-gamma and Bernstein-type inequalities. We contrast these results to the simpler results for functionals of the density type, which we re-derive for comparison. As a side result we further obtain explicit upper bounds on the variance of current functionals and discuss the relation of our results to large-deviation statements.   
As an application of the main theorems we construct non-asymptotic confidence intervals for individual realizations as well as small-sample means, yielding explicit minimal trajectory lengths and sample sizes that will be relevant in practical applications. 

\subsection{Thermodynamic bounds on fluctuations} A priori unexpectedly, the derived explicit concentration inequalities feature $L^2(\mu)$ and $L^\infty(\mu)$ norms of the local mean 
velocity $\vs$. From a physical perspective this is striking. Namely, the \emph{observed} steady-state entropy production rate of the 
system---the central object of stochastic thermodynamics---in fact corresponds to $\Sigma^U=\E^\mu[v_s\cdot D^{-1}v_s \mathbb{1}_{\mathrm{supp}(U)}]$ \cite{Min_1999,seifert2012stochastic,seifert2025stochastic}  and bounds the concentration
of any generalized current by replacing the parameters in Theorem~\ref{thm:explicit_current_1}
by
\begin{align}
    \tilde\sigma^2_{J,1}\leq 
    \tilde\sigma_{\mathrm{th},1}^2
  = 2\|Q\|_{L^\infty(\mu)}
    \left(1 + \frac{\Sigma^U}{\lambda_\mathrm{gap}}\right),
  \qquad
  c_{J,1}\leq
  c_{\mathrm{th},1}
= \frac{2\sqrt{\Sigma^U_\infty\|Q\|_{L^\infty(\mu)}}}{\lambda_\mathrm{gap}},
\end{align}
where 
$\Sigma^U_\infty\equiv\|\vs\cdot D^{-1}\vs \mathbb{1}_{\mathrm{supp}(U)} \|_{L^\infty(\mu)}$ 
defines the \emph{observed} maximal dissipation rate (see companion manuscript).
This is a quite deep physical insight which extends the laws of (stochastic) thermodynamics to the concentration behavior of additive functionals of generalized currents at all times, including extreme deviations in the tails.  

\subsection{Open questions} This work established concentration properties of Stratonovich additive functionals around the stationary mean for geometrically ergodic diffusion initiated in a general, sufficiently well-behaved measure. Our present results lay the foundations for generalization to the infinite-dimensional setting. Corresponding results for sub-geometrically ergodic diffusion, however, remain elusive and will require a somewhat different approach as the Poincar\'e inequality does not apply. 
Moreover, it will be interesting to investigate whether
our concentration bounds on Stratonovich-type functionals
can be extended to the hypocoervice setting 
(i.e., dynamics that do not satisfy a Poincar\'e inequality but converge exponentially fast to the stationary state in $L^2(\mu)$), 
where recently Bernstein-type concentration inequalities 
have been proven 
for density-type observables using a modified inner product \cite{Birrell2025}. 
Similarly, 
extensions to the time-inhomogeneous case \cite{Birrell2026}, 
multiplicative noise diffusions~\cite{ZimmerArxiv},
or results on the concentration around the transient, time-dependent mean remain to be established, which will likely require dedicated two-sided bounds and is thus likely to pose a substantially greater challenge.   
Lastly, it will be interesting to explore whether explicit concentration inequalities 
for Stratonovich-type observables 
can be established using different $L^2(\mu)$-functional inequalities, e.g., log-Sobolev or $F$-Sobolev inequalities \cite{wu2000deviation, cattiaux2008deviation}, 
transportation-information inequalities \cite{guillin2009transportation,gao2014bernstein},
or perturbation theory \cite{lezaud2001chernoff}.

\section{Funding} The financial support from the Studienstiftung des Deutschen Volkes (to R.~B.) and from the European Research
Council (ERC) under the European Union’s Horizon Europe
research and innovation program (Grant Agreement
No.~101086182 to A.~G.) 
is gratefully acknowledged. 

\bibliographystyle{imsart-number} 
\bibliography{bibliography}       

\end{document}